\begin{document}

\title{The structure theory of set addition revisited}

\author{\tsname}
\address{\tsaddress}
\email{\tsemail}

\begin{abstract}
In this article we survey some of the recent developments in the structure theory of set addition.
\end{abstract}

\maketitle

\section{Introduction}

The purpose of this survey is to review some recent advances in Fre{\u\i}man's theorem, one of the central results in what is called the structure theory of set addition.  This theory was first systematically developed by Fre{\u\i}man in \cite{fre::,fre::0} and a large part of it is concerned with the question `what do approximate groups look like?'

In fact we shall be interested in what \emph{Abelian} approximate \emph{cosets of sub-}groups look like.  To craft a more concrete question it is useful to have some notation: suppose, as it shall be throughout, that $G$ is an Abelian group.  Given $A,A' \subset G$ we write $A+A'$ for the \textbf{sumset} of $A$ and $A'$ which is defined by
\begin{equation*}
A+A':=\{a+a': a \in A, a' \in A'\}.
\end{equation*}
Given this it is easy to check that a subset $W$ of $G$ is a coset (of a subgroup) if and only if
\begin{equation*}
W \neq \emptyset \text{ and } |W+W|=|W|.
\end{equation*}
As indicated we are interested in approximate cosets and to this end we relax these requirements so that they are only approximately true.  Relaxing the first requirement does not lead to an interesting generalisation; for the second we ask that the sumset be `not much larger' than the original set.  To be clear given $K \geq 1$ we say that $A$ (non-empty) has \textbf{doubling}\footnote{One might very reasonably suggest that one use the phrase `doubling ratio' instead of `doubling' here.  While this would be sensible, this is not the terminology in use in the subject and to maintain consistency with existing literature we shall follow the standard terminology.} $K$ if $|A+A| \leq K|A|$ and are interested in which sets have this property.

We shall be interested in the case when the doubling is small and to get a sense of what this means it is worth noting that trivially \emph{every} set has doubling $|A|$ since there cannot be more elements in $A+A$ than there are pairs in $A^2$.  (In fact this can trivially be improved to $(|A|-1)/2$ but our interest at this stage is really in orders of magnitude.)

It may be instructive on a first read to think of $K=O(1)$ as $|A| \rightarrow \infty$, although it will turn out later that we can allow $K$ to grow (slowly) with $|A|$.

If $A$ is a coset then $A$ has doubling $1$ which is certainly small, but are there any other sets with small doubling?  One way to create such sets is to take a large subset of a coset.  In particular, suppose that $W$ is a coset in $G$ and $A \subset W$ is such that $|W| \leq K|A|$.  Then since $A+A \subset W+W$ we conclude that $A$ has doubling $K$. 

It turns out that if $K$ is small enough then the above construction is characteristic in the sense that it is the \emph{only} way to create sets with doubling $K$.  This was proved by Fre{\u\i}man in \cite{fre::3} and appears as \cite[Exercise 2.6.5]{taovu::}.  At this point it is worth remarking that the book \cite{taovu::} of Tao and Vu is the standard text for many of the better known aspects of the material we shall be discussing and, where possible, we have given references to that alongside the original source.
\begin{proposition}\label{prop.simpfrei}
Suppose that $|A+A| \leq K|A|$ for some $K<1.5$.  Then there is a subgroup $H$ of size at most $K|A|$ such that $A$ is contained in a coset of $H$.
\end{proposition}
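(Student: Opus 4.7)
My plan is to apply Kneser's theorem to $A+A$ and use the small doubling hypothesis to force the number of cosets of the stabiliser hit by $A$ to be exactly one.

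First I would set up the notation. Let $H$ be the stabiliser of $A+A$, i.e.\ $H = \{g \in G : g + (A+A) = A+A\}$, which is a subgroup of $G$. Since $H \leq G$ is a subgroup, $A+A$ is a union of cosets of $H$, and so is $A+H$; write $|A+H| = m|H|$ and $|A+A| = n|H|$ for some positive integers $m,n$. Kneser's theorem applied with $B=A$ gives
\begin{equation*}
|A+A| \geq 2|A+H| - |H|, \qquad \text{i.e.\ } n \geq 2m - 1.
\end{equation*}

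Next I would combine this with the small doubling hypothesis. Since $0 \in H$ we have $A \subseteq A + H$, so $|A| \leq m|H|$. Hence
\begin{equation*}
(2m-1)|H| \leq n|H| = |A+A| \leq K|A| \leq Km|H|,
\end{equation*}
which rearranges to $m(2-K) \leq 1$. The hypothesis $K < 3/2$ gives $2-K > 1/2$, forcing $m < 2$, so $m=1$.

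Finally I would read off the conclusion. With $m=1$ the set $A+H$ is a single coset $W$ of $H$, and $A \subseteq W$. Moreover $A+A \subseteq (A+H)+(A+H) = W+W$, which is again a single coset of $H$, so $|A+A| = |H|$ (using $n \geq 2m-1 = 1$ in the other direction). Therefore $|H| = |A+A| \leq K|A|$, and $H$ is the required subgroup.

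The only real content here is invoking Kneser's theorem; the rest is bookkeeping on the integers $m$ and $n$. The main obstacle, if any, is justifying Kneser's inequality (which I would quote as a black box), and being careful that the stabiliser $H$ is well-defined as a subgroup and that the cosetising inequalities $|A| \leq m|H|$ and $|A+A| \geq (2m-1)|H|$ are applied in the right direction.
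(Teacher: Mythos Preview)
Your proof is correct. The paper does not actually give a proof of this proposition; it merely cites Fre{\u\i}man's original paper and \cite[Exercise 2.6.5]{taovu::}, so there is no ``paper's own proof'' to compare against.

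Your route through Kneser's theorem is clean and entirely standard. A couple of minor cosmetic points: the parenthetical ``using $n \geq 2m-1=1$ in the other direction'' is a slightly roundabout way of saying what you need --- the inclusion $A+A \subset W+W$ gives $|A+A|\leq|H|$, and since $H$ stabilises the non-empty set $A+A$, that set is a non-empty union of $H$-cosets, giving $|A+A|\geq|H|$; together these yield $|H|=|A+A|\leq K|A|$. One could also remark that the paper mentions Kneser's theorem only in connection with the next proposition (Proposition~\ref{prop.simkneser}), and the exercise in Tao--Vu it cites has a more elementary proof not requiring Kneser (based on the observation that $|(a+A)\cap(b+A)|>|A|/2$ for all $a,b\in A$, from which one argues directly that $H:=A-A$ is a subgroup). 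But invoking Kneser as a black box, as you do, is perfectly legitimate and arguably tidier.
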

(For the unfamiliar it may be worth saying that this result is \emph{not} the Fre{\u\i}man's theorem we shall ultimately be interested in.)

The result shows that the only way of creating sets with doubling less than $1.5$ is the method described before the proposition and, moreover, every set created in that way has doubling less than $1.5$: the result characterises sets with doubling less than $1.5$. 

There is a good reason for the limitation of $1.5$ above and that is because there is a qualitatively new way of constructing sets with small doubling.  Suppose that $H \leq G$, $x+H \in G/H$ has order $4$ and put $A:=H \cup (x+H)$.  Then a short calculation shows that $|A+A| = 1.5|A|$ but any coset containing $A$ has size at least $2|A|$.

Instead of taking large subsets of one coset our new construction takes unions of cosets (of the same subgroup).  In light of this we introduce a new piece of terminology\footnote{This follows Green and Ruzsa \cite{greruz::}, and has been much popularised by Tao \cite{tao::6}.}: we say that a set $A$ is \textbf{$k$-covered} by $B$ if there is a set $X$ of size at most $k$ such that $A \subset X+B$. 

One can combine the two ways of creating sets with small doubling by considering (disjoint) unions of large subsets of cosets (of the same subgroup) to produce more sets with small doubling, and it turns out that while the doubling remains less than $2$ this is the only way of creating sets with doubling less than $2$.
\begin{proposition}\label{prop.simkneser}
Suppose that $|A+A| \leq K|A|$ for some $K<2-\epsilon$.  Then there is a subgroup $H$ of $G$ such that $|H| \leq K|A|$ and $A$ is $O_\epsilon(1)$-covered by $H$.
\end{proposition}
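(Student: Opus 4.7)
The plan is to apply Kneser's theorem to the sumset $A+A$.  Recall that Kneser's theorem asserts that for finite non-empty subsets $A,B$ of an Abelian group $G$ one has
\begin{equation*}
|A+B| \geq |A+H| + |B+H| - |H|,
\end{equation*}
where $H := \{g \in G : g+(A+B) = A+B\}$ is the stabiliser of $A+B$ in $G$.  Taking $B=A$ and letting $H$ be this stabiliser identifies a candidate subgroup; since $A+H$ is automatically a union of $H$-cosets, the quantity that appears on the right-hand side admits a clean combinatorial interpretation.

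Concretely, I would let $m$ denote the number of $H$-cosets that $A$ meets, so that $|A+H| = m|H|$, and the Kneser inequality becomes
\begin{equation*}
|A+A| \geq (2m-1)|H|.
\end{equation*}
Since $A \subseteq A+H$ we also have $|A| \leq m|H|$, and combining this with the hypothesis $|A+A|\leq K|A|$ gives $(2m-1)|H| \leq Km|H|$, i.e.\ $m(2-K)\leq 1$.  Using $K < 2-\epsilon$ this forces $m < 1/\epsilon$, which says exactly that $A$ is covered by fewer than $1/\epsilon = O_\epsilon(1)$ cosets of $H$; in the covering language of the paper, $A$ is $O_\epsilon(1)$-covered by $H$.

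To get the size bound, I would rearrange $(2m-1)|H| \leq |A+A| \leq K|A|$ to obtain $|H| \leq K|A|/(2m-1) \leq K|A|$, using $m\geq 1$ since $A$ is non-empty.  This delivers both conclusions simultaneously.

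The only real obstacle is the invocation of Kneser's theorem itself, which is doing all of the work: once the right subgroup $H$ (the stabiliser of $A+A$) is chosen, the counting above is essentially forced.  In particular the threshold $K<2$ in the hypothesis is exactly the value at which the Kneser estimate stops being strong enough to bound $m$; this is the combinatorial origin of the constant $2$ appearing in the statement, and explains why the argument cannot be pushed beyond this range without introducing fundamentally new ideas.
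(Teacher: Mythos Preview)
Your argument is correct, and it matches the approach the paper indicates: the paper does not actually prove this proposition but simply remarks that it follows from Kneser's theorem ``via something called a covering argument of a sort we shall see later in \S\ref{sec.covering}.'' Your use of the stabiliser $H$ of $A+A$ together with the coset count $m=|A+H|/|H|$ is exactly such an argument, and the arithmetic $(2m-1)\leq Km$ cleanly yields both the covering bound $m<1/\epsilon$ and the size bound $|H|\leq K|A|$.

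One small remark on the comparison: the paper's pointer to \S\ref{sec.covering} suggests Ruzsa's covering lemma as the intended tool, whereas you bypass it by observing directly that $A\subset A+H$ is a union of $m$ cosets and bounding $m$ via Kneser. This is if anything slightly more efficient---Ruzsa's covering lemma applied to $A$ and $H$ would bound the covering number by $|A+H|/|H|=m$ anyway, so the two routes coincide. Your explanation of why the threshold $K=2$ is sharp for this method is also accurate and worth keeping.
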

Unlike Proposition \ref{prop.simpfrei} this result is \emph{not} characteristic in that not every set satisfying the conclusion has doubling strictly less than $2$.  Indeed, the doubling of such sets may be much larger than $2$.  In fact a more precise characterisation of sets with doubling in this range is available in the form of Kneser's theorem (\cite{kne::} or \cite[Theorem 5.5]{taovu::}) from which Proposition \ref{prop.simkneser} follows via something called a covering argument of a sort we shall see later in \S\ref{sec.covering}.

The example to highlight the limitation of Proposition \ref{prop.simpfrei} was the first in a series of examples generated by longer and longer arithmetic progressions and these examples go some way to explaining why $2$ should be a critical point in Proposition \ref{prop.simkneser}.  Indeed, if $G=\Z$ and $A$ is a finite arithmetic progression then $A$ has doubling $(2-1/|A|)$.  If some version of the conclusion of Proposition \ref{prop.simkneser} were to hold without the dependence on $\varepsilon$ then we should need to cover $A$ by $O(1)$ cosets of a subgroup $H \leq G$ of size $O(|A|)$.  Of course the only finite subgroup of $\Z$ is $\{0\}$ and so this is not possible.  

This last example shows us that if we are to have a hope of describing sets with doubling $2$ then we shall need to admit another form of structure: long arithmetic progressions.  An arithmetic progression can be thought of as a discrete representation of an interval and in this light can be seen as a special case of a more general structure which, it turns out, also has small doubling: lattices in convex bodies.

A \textbf{centred convex progression} is a set $P$ in $G$, a symmetric convex body $Q$ in $\R^d$ and a homomorphism $\phi:\Z^d \rightarrow G$ such that $\phi(\Z^d\cap Q) = P$.  We say that $P$ is \textbf{$d$-dimensional} and we shall usually simply talk about the set $P$ with $Q$ and $\phi$ being implied (despite the fact that they are not necessarily well-defined).

Given this definition a (symmetric) arithmetic progression is a $1$-dimensional centred convex progression and all $1$-dimensional centred convex progressions are (symmetric) arithmetic progressions.

A convex body in $\R^d$ has doubling $2^d$, and it turns out that this doubling property is inherited by $d$-dimensional convex progressions in the sense that they have doubling $\exp(O(d))$.  The proof of this is not very difficult and can be done using a covering argument.  The details are in Lemma \ref{lem.ccppoly} to avoid breaking the flow.

Given a set of small doubling we can always create a new set with small doubling by adding a subgroup.  In light of this we define a \textbf{$d$-dimensional centred convex coset progression} to be a set of the form $P+H$ where $P$ is a $d$-dimensional centred convex progression and $H$ is a subgroup of $G$; this also has doubling $\exp(O(d))$.  (Again, see Lemma \ref{lem.ccppoly} for a proof.)

With this new type of structure we can set about constructing a large class of sets with small doubling (small here meaning $O(1)$).  In our earlier discussion we found two methods of producing sets with small doubling from subgroups: we could take large subsets and we could take a union of a small number of cosets.  We now replace `subgroup' in these constructions by `centred coset progression'.

Suppose that $A$ is $\exp(d)$-covered by a $d$-dimensional centred convex coset progression $M$ of size at most $\exp(d)|A|$.  Then by definition there is a set $X$ of size at most $\exp(d)$ such that $A \subset X+M$ whence
\begin{equation}\label{eqn.calc}
|A+A| \leq |X+M+X+M| \leq |X|^2|M+M| =\exp(O(d))|A|,
\end{equation}
so that $A$ has doubling $\exp(O(d))$.  Remarkably it turns out that the above is the \emph{only} way of constructing sets of small doubling.
\begin{theorem}[Green-Ruzsa theorem; Fre{\u\i}man's theorem for Abelian groups]\label{thm.gr}
Suppose that $|A+A| \leq K|A|$.  Then $A$ is $\exp(d(K))$-covered by a $d(K)$-dimensional centred convex coset progression $M$ of size at most $\exp(d(K))|A|$.
\end{theorem}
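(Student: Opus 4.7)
The plan is to follow the Ruzsa--Green strategy, which extends Ruzsa's proof of Freiman's theorem from $\Z$ to an arbitrary Abelian group. The first step is to upgrade the single hypothesis $|A+A|\le K|A|$ to uniform bounds on all iterated sumsets using the Plünnecke--Ruzsa inequalities: one obtains $|nA-mA| \le K^{O(n+m)}|A|$ for every fixed $n,m$. This is essential because the main construction will take place not inside $A+A$ but inside $2A-2A$ (or a similar set), and later the covering argument will need control on sets like $3A-2A$. The next step is a modelling lemma of Ruzsa type: up to replacing $A$ by a Freiman $s$-isomorphic copy for a fixed moderately large $s$ (say $s=8$), one can assume $A$ sits inside a finite Abelian group $G'$ of size at most $C(K)|A|$, where Fourier analysis is available. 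In the general Abelian setting, torsion makes this step more delicate than over $\Z$, but Ruzsa's argument goes through with care.

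Working inside $G'$, I would then run a Bogolyubov-type argument: compute the Fourier transform of $1_A * 1_A * 1_{-A} * 1_{-A}$, use Parseval together with the bound $|2A-2A|\le K^{O(1)}|A|$ to show that the relative spectrum
\begin{equation*}
\Lambda := \{\gamma \in \widehat{G'} : |\widehat{1_A}(\gamma)| \ge \eta |A|\}
\end{equation*}
has $|\Lambda| \le K^{O(1)}$ for a suitable threshold $\eta$, and conclude that $2A-2A$ contains the Bohr set $B(\Lambda,1/4)$. A naive application here gives a dimension bound that is too crude, so the second (and key) input is Chang's theorem, which refines the spectrum and gives a dimension $d(K)$ that is polynomial (rather than exponential) in $K$.

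The third step is to extract a centred convex coset progression from this Bohr set. Pulling back the characters in $\Lambda$ defines a lattice inside $\Z^{|\Lambda|}$, and applying Minkowski's second theorem (a standard geometry-of-numbers argument) inside this lattice yields a centred convex progression $P$ of dimension $|\Lambda|$ such that $P + H \subset 2A-2A$, where $H$ is the kernel subgroup implicit in $B(\Lambda,1/4)$. The resulting coset progression $M := P+H$ has size $\ge \exp(-O(d(K)))|G'|$, and since $|G'| \le C(K)|A|$ this gives $|M|$ of comparable size to $|A|$. Finally, the iterated sumset bounds from Plünnecke--Ruzsa yield $|A+M|\le |3A-2A|\le \exp(O(d(K)))|M|$, so Ruzsa's covering lemma produces a set $X$ with $|X| \le \exp(O(d(K)))$ and $A \subset X + (M - M)$. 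The set $M - M$ is itself a centred convex coset progression of dimension $O(d(K))$, and absorbing the implicit constants into $d(K)$ gives the theorem.

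The main obstacle is obtaining a reasonable bound on $d(K)$: a bare-hands Bogolyubov argument gives only $d(K) = \exp(K^{O(1)})$, which is too weak for many applications and requires Chang's theorem (and, in the sharpest modern versions, the Croot--Sisask almost-periodicity machinery) to bring down to $d(K)= O(K \log K)$ or better. The modelling lemma is the other delicate point, as the ambient group must be chosen small enough that Fourier analysis is efficient yet large enough to house a Freiman $s$-isomorphic image of $A$ with the correct torsion structure.
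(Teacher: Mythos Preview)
Your outline is essentially the original Green--Ruzsa argument from \cite{greruz::0}, and as a sketch it is correct: Pl\"unnecke--Ruzsa to control iterated sumsets, a Ruzsa-type modelling lemma to embed a Fre\u\i man-isomorphic copy of $A$ into a small finite group, Bogolyubov (sharpened by Chang's spectral lemma) to locate a large Bohr set inside $2A-2A$, geometry of numbers to convert the Bohr set into a coset progression, and finally Ruzsa covering to finish. This does prove the theorem with $d(K)$ polynomial in $K$.

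The paper, however, takes a genuinely different route organised around the intermediate notion of \emph{relative polynomial growth}. Rather than modelling $A$ inside a small finite group and running Bogolyubov there, the paper first uses the Croot--Sisask lemma (in place of Fourier-analytic Bogolyubov) to produce a symmetric neighbourhood $X$ with $|(3k+1)X|<2^k|X|$ for some $k=O(\log^{3+o(1)}K)$, and hence relative polynomial growth of that order. The second half then works directly on $X$ (no modelling lemma): a Fourier argument on the large spectrum of $X$ locates a Bohr set trapping $X-X$, and Ruzsa's logarithmic embedding $R_\Gamma$ combined with a Chang-type covering lemma shows this Bohr set is a low-dimensional centred convex coset progression. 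The covering step is the same as yours.

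The practical payoff of the paper's route is quantitative: it avoids the modelling lemma entirely and replaces the polynomial-in-$K$ dimension bound your argument yields with $d(K)=O(\log^{3+o(1)}K)$. Your approach is closer to the historical proof and is perhaps more familiar, but it cannot reach poly-logarithmic bounds without importing the Croot--Sisask machinery in place of Chang's theorem. One small correction: a bare Bogolyubov argument after modelling already gives $d(K)$ polynomial in $K$ (via Parseval the large spectrum has size $O(K^{O(1)})$), not $\exp(K^{O(1)})$; Chang's lemma improves the exponent but does not change the regime.
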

The result above was first proved by Fre{\u\i}man \cite{fre::} for the case of $G$ torsion-free and later a new proof with better bounds was given (for the same setting) by Ruzsa in \cite{ruz::9}.  In \cite{ruz::01} Ruzsa proved the result for groups of bounded exponent which is in some sense at the other end of the spectrum from torsion-free, and then Green and Ruzsa in \cite{greruz::0} established the result above for arbitrary (Abelian) groups with another proof appearing a little later in \cite[Theorem 5.43]{taovu::}.

While Theorem \ref{thm.gr} resolves the qualitative question of the structure of sets with small doubling, the quantitative question remains and this is where most of the recent advances have been.  In their first proof of Theorem \ref{thm.gr} Green and Ruzsa showed that one may take
\begin{equation*}
d(K)=O(K^{4+o(1)}).
\end{equation*}
Various strengthenings were available at that time for torsion-free and groups of bounded exponent. (See, for example, \cite{cha::0} or the appendix to \cite{bou::1} for the torsion-free case, and \cite{gretao::3} for the bounded exponent case.)  Unfortunately, all bounds were of the form $d(K)=O(K^C)$ for some $C>0$, and it was seen as a significant open problem to show $d(K)=O(K^{o(1)})$.

In \cite{sch::1} Schoen made a striking breakthrough proving a bound of the form\footnote{This is our first use of $\log$s in this survey, and they will appear a lot more.  We shall always think of the argument as being larger than some constant, but if the reader does not wish to concern themselves with this then they may think of $\log x$ as denoting $\log (2+x)$.}
\begin{equation*}
d(K)=O(\exp(O(\sqrt{\log K}))),
\end{equation*}
and then shortly after that Croot and Sisask came out with an important new argument in \cite{crosis::} which it turned out could be used to prove
\begin{equation*}
d(K)=O(\log^{3+o(1)}K).
\end{equation*}
Establishing this is one of the main goals of this survey; to be clear we shall prove the following version of Theorem \ref{thm.gr}.
\begin{theorem}[Green-Ruzsa theorem, good bounds]\label{thm.finalfre}
Suppose that $|A+A| \leq K|A|$.  Then $A$ is $\exp(O(\log^{3+o(1)}K))$-covered by an $O(\log^{3+o(1)}K)$-dimensional centred convex coset progression $M$ of size at most $\exp(O(\log^{3+o(1)}K))|A|$.
\end{theorem}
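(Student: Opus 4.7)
The plan is to split the proof into two essentially independent quantitative statements together with a routine covering argument. The first input is a Plünnecke--Ruzsa type control of iterated sumsets: if $A$ has doubling $K$ then $|kA-\ell A|\le K^{k+\ell}|A|$, which allows us to pass freely to sets like $2A-2A$ and $3A-2A$ without losing more than a factor $K^{O(1)}$. The second, and main, is a quantitative form of the Bogolyubov--Ruzsa lemma: there exists a centred convex coset progression $M$ of dimension $d=O(\log^{3+o(1)}K)$ and size $|M|\ge \exp(-O(d))|A|$ with $M\subseteq 2A-2A$. Granted these, the theorem is immediate from Ruzsa's covering lemma applied to the pair $(A,M)$: since $|A+M|\le|3A-2A|\le K^{O(1)}|A|$, the lemma produces a set $X$ of size at most $|A+M|/|M|=\exp(O(d))$ with $A\subseteq X+M-M$, and $M-M$ sits inside a centred convex coset progression of the same dimension and comparable size.

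For the core step --- the quantitative Bogolyubov--Ruzsa lemma --- I would follow the Croot--Sisask almost-periodicity route. The starting observation is that for any $\epsilon>0$ and $p\ge 2$ one can produce a symmetric set $T\subseteq A-A$ of size at least $\exp(-O(p\epsilon^{-2}\log K))|A|$, every element of which is an $\epsilon$-approximate $L^p$-period of the convolution $1_A\ast 1_A$; this is the Croot--Sisask lemma and is proved by a random sampling argument on $A$. Since the sum of $k$ approximate periods is an approximate period with error at most $k\epsilon$, iterating gives $kT\subseteq 2A-2A$ for a suitable $k$, a Bogolyubov-type containment whose obstruction is only that $T$ itself need not look like a progression. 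To upgrade $kT-kT$ to a coset progression one applies Chang's theorem to the large spectrum of $1_T$, extracting a Bohr set $B$ of dimension controlled by $\log(|A|/|T|)$ inside $2A-2A$, and then converts $B$ into a centred convex coset progression of the same dimension and comparable size via the standard geometry-of-numbers argument.

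The main obstacle --- and the source of the exponent $3+o(1)$ --- is the bookkeeping that balances the loss factors in these steps against one another. The Croot--Sisask step costs $\epsilon^{-2}$, the iteration $kT\subseteq 2A-2A$ forces $\epsilon$ smaller than $1/k$, and Chang's theorem contributes a further logarithm when extracting the Bohr set from $T$. Optimising the free parameters (essentially $\epsilon\approx 1/\log K$, $p\approx \log K$, and $k$ polylogarithmic in $K$) against each other is the entire quantitative content of Theorem \ref{thm.finalfre}; every logarithm in the final exponent is produced here, and it is precisely this optimisation that separates the Croot--Sisask bound of $\log^{3+o(1)}K$ from Schoen's earlier $\exp(O(\sqrt{\log K}))$, and from the still-conjectural polynomial Freiman--Ruzsa bound $\log^{1+o(1)}K$.
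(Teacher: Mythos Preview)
Your overall architecture --- find a large centred convex coset progression inside an iterated sumset of $A$, then use Ruzsa's covering lemma to cover $A$ by translates of it --- is sound and is essentially how the paper is organised.  However, there is one genuine gap and one significant structural divergence from the paper's route.

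\textbf{The gap: Konyagin's iteration is missing.}  Your parameter bookkeeping does not give the exponent $3+o(1)$.  With $\epsilon\approx 1/k$, $p\approx \log K$ and $k$ polylogarithmic in $K$, the Croot--Sisask lemma yields $|T|\ge \exp(-O(p\epsilon^{-2}\log K))|A|=\exp(-O(k^2\log^2K))|A|$, and any subsequent Bohr/progression extraction will inherit a dimension of at least this order.  Optimising $k$ against the later steps gives $O(\log^4K)$ at best (this is Proposition~\ref{prop.grow1} in the paper), not $\log^{3+o(1)}K$.  The drop from $4$ to $3+o(1)$ is not a matter of balancing the free parameters more carefully: it comes from an additional idea, due to Konyagin, in which one first applies the Croot--Sisask step \emph{iteratively} to a pair of sets $S,T$, at each stage shrinking the expansion ratio $|S+T|/|S|$ from $L$ to $L^{1/2}$ until it is $O(1)$, and only then runs the main argument.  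This reduces the cost of the Croot--Sisask step from $\exp(-O(m^2\log^2K))$ to $\exp(-O(m^2\log^{1+o(1)}K))$, which is exactly the saving of one logarithm.  Your proposal contains no hint of this iteration, so as written it proves only the $\log^4K$ version.

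\textbf{The divergence: Chang's spectral theorem versus relative polynomial growth.}  Your route from $T$ to a coset progression via ``Chang's theorem applied to the large spectrum of $1_T$'' is not what the paper does, and as stated it is incomplete: Chang's spectral theorem requires $T$ to have a density in some ambient finite group, so a Ruzsa--Green modelling step is implicit but unmentioned, and the Bohr set one extracts naturally sits in $2T-2T$ rather than in $2A-2A$.  The paper avoids modelling altogether by taking a different intermediate object: instead of a Bohr set, it first produces (via Chang's \emph{covering} lemma, Lemma~\ref{lem.ccl}, not the spectral one) a symmetric set of \emph{relative polynomial growth} of the right order, and only then uses Fourier analysis to pass from polynomial growth to a Bohr set and thence to a convex coset progression.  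Your route can be made to work with the modelling lemma supplied, but you should be aware that it is a genuinely different decomposition from the paper's.
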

This result with a power of $6$ instead of $3$ was shown in \cite{san::00}, and in the basic framework of this paper that $6$ improves to a $4$.  An improvement of the $4$ to a $3$ is the result of a wonderful iterative application of our basic tool which is due to Konyagin.

For comparison the calculation in the construction before Theorem \ref{thm.gr} turns out to be tight and it follows from this that $d(K)=\Omega(\log K)$, and this is conjecturally the correct order of magnitude.  To see this suppose that $|A+A| = K|A|$ and note by the calculation in (\ref{eqn.calc}) that
\begin{equation*}
K|A| \leq \exp(2d(K))\exp(O(d(K)))\exp(d(K))|A|=\exp(O(d(K)))|A|,
\end{equation*}
from which the lower bound on $d(K)$ follows.
\begin{conjecture}[Polynomial Fre{\u\i}man-Ruzsa conjecture]
Suppose that $A$ has $|A+A| \leq K|A|$.  Then $A$ is $\exp(O(\log K))$-covered by an $O(\log K)$-dimensional centred convex coset progression $M$ of size at most $\exp(O(\log K))|A|$.
\end{conjecture}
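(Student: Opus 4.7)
My plan is to push the Croot--Sisask/Konyagin framework underlying Theorem~\ref{thm.finalfre} down to the conjectured exponent $1$ in $\log K$. As with existing proofs I would first reduce, via the Pl\"unnecke--Ruzsa inequalities and a Ruzsa-style covering argument of the kind mentioned in \S\ref{sec.covering}, to the sub-problem of locating a centred convex coset progression $M$ of dimension $d=O(\log K)$ and size $\exp(O(d))|A|$ inside $2A-2A$; once $M$ is found, the covering argument $\exp(O(d))$-covers $A$ by $M$ as required, and the doubling estimate $\exp(O(d))$ for $M$ is already built into the definition of a centred convex coset progression.

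The bulk of the work is in producing $M$. Current proofs attain $d=O(\log^{3+o(1)}K)$ by iterating a Croot--Sisask almost-periodicity lemma several times, each pass spending a factor of $\log K$ in the eventual dimension of $M$ (Konyagin's refinement, as the excerpt notes, is what removes one of these factors to take the exponent from $4$ to $3$). To reach $d=O(\log K)$ my plan is to design a single-pass procedure that performs all the structural extraction at once. The most natural framework is an entropy one: pass from $A$ to a random variable $X$ uniform on a large subset of $A$, exploit the entropic form of small doubling $\mathbb{H}(X+X') \leq \log K + \mathbb{H}(X)$, and extract an almost-invariant distribution directly via submodularity of Shannon entropy rather than by $L^p$-norm sampling of $1_A$. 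From such an almost-invariant distribution the passage to a centred convex coset progression is a geometry-of-numbers step --- a Freiman homomorphism into $\mathbb{Z}^d$ followed by a Minkowski-type packing of the image inside a symmetric convex body --- whose dimension cost is already linear in $\log K$ and so harmless.

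The main obstacle is precisely the single-pass step. Every known proof of Croot--Sisask-style almost-periodicity is intrinsically iterative: one samples from $A$ to produce approximate invariance, then uses that invariance to restrict to a smaller set, which forces a further round of sampling, and each round costs $\Omega(\log K)$ in the dimension eventually extracted. Removing this iteration, or finding an accounting scheme (entropic or otherwise) under which the cumulative cost stays at a single $\log K$ rather than a polylog, is the crucial missing ingredient and the reason the conjecture remains open; the covering and geometry-of-numbers stages outlined above would then be essentially sharp, so the entire deficit between $\log^3$ and $\log$ lives in this single place.
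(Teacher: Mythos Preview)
The statement you are addressing is labelled in the paper as a \emph{conjecture}, not a theorem: the paper gives no proof of it, and indeed explains that the best result obtained (Theorem~\ref{thm.finalfre}) falls short by roughly two powers of $\log K$. There is therefore no ``paper's own proof'' against which to compare your attempt.

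Your proposal is not a proof either, and to your credit you say so explicitly: you outline a strategy (entropy-based single-pass almost-periodicity followed by a geometry-of-numbers embedding) and then correctly identify the point at which it fails, namely that no known argument extracts the requisite approximate invariance in one pass with only an $O(\log K)$ dimensional cost. That diagnosis is accurate and matches the paper's own discussion: the Croot--Sisask/Konyagin machinery of \S\S\ref{sec.cs}--\ref{sec.kony} is intrinsically iterative, each round contributing a $\log K$ factor to the eventual order of relative polynomial growth, and the paper offers no mechanism for collapsing those factors. Your remark that the covering and geometry-of-numbers stages are already ``essentially sharp'' is also consistent with the paper --- Theorem~\ref{thm.rtc} loses only polylogarithmic factors in $d$, and the covering step (Lemma~\ref{lem.convert}) is lossless at this scale --- so the entire gap between $\log^{3+o(1)}K$ and $\log K$ does indeed sit in the passage from small doubling to relative polynomial growth.

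In short: there is a genuine gap, you have located it correctly, and neither you nor the paper closes it. What you have written is a reasonable research outline, not a proof.
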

We have skipped over a number of the details in this introduction, but before moving on to a more careful discussion it is worth making a couple of remarks on why Fre{\u\i}man's theorem is important.

First there is a practical reason: as a result of the celebrated work of Gowers \cite{gow::4,gow::0} in the late 90s Fre{\u\i}man's theorem has found a bevy of applications.  For example, Gowers himself used it to spectacularly improve the bounds in Szemer{\'e}di's theorem; Szemer{\'e}di and Vu used it to investigate long arithmetic progressions in \cite{szevu::0}; Tao and Vu used it to investigate random matrices in \cite{taovu::0}; Schoen records many shorter consequences at the end of his paper \cite{sch::1} on Fre{\u\i}man's theorem; and Chang in \cite{cha::5} collects together a number of other applications where good bounds would be particularly useful.  There is some discussion of applications at the end of the paper in \S\ref{sec.apps}.

Secondly there are good theoretical reasons, three of which we shall record now.  They may not all make precise sense at this point in the article, but part of our hope is that we shall be able to go some way towards explaining them.
\begin{enumerate}
\item The hypothesis of the theorem is easily satisfied.  In a sense we have seen that this is true empirically as a result of the many applications.  From a theoretical perspective this is because convex coset progressions are ubiquitous in contrast to subgroups (in some groups).  An example to bear in mind is $G=\Z/p\Z$ for $p$ a prime.  This has a very poor subgroup structure, but since arithmetic progressions are convex coset progressions we see immediately that there is an abundance of convex progressions.
\item A convex coset progression supports a lot of structure. While it is not a coset, it behaves enough like a coset that it can support many commonly used analytic arguments, and in particular a sort of approximate harmonic analysis.  This means that many results for groups can also be established for convex coset progressions.  The pioneering work here is that of Bourgain \cite{bou::5} which was framed in a level of generality which includes convex coset progressions by Green and the author in \cite{gresan::0}.
\item Finally, the result is a rough equivalence: any set satisfying the conclusion of the theorem satisfies the hypothesis with $K$ replaced by $\exp(O(d(K)))$.  Thus the better the bound on the function $d(K)$ the less loss there is in passing from the implicit algebraic data that a set has small doubling to the explicit algebraic data that it is generated from a convex coset progression.
\end{enumerate}
The paper now splits as follows.  In the next section, \S\ref{sec.overview}, we describe the main plan of attack on Fre{\u\i}man's theorem which roughly splits it into two parts.  The first part is covered in \S\S\ref{sec.covering}--\ref{sec.kony}; the second in \S\S\ref{sec.rpgccp}--\ref{sec.lcp}.  There is a concluding section in \S\ref{sec.con}, and also a section on Pl{\"u}nnecke's inequality in \S\ref{sec.plun} which is a basic tool in the structure theory of set addition and has recently received a fantastic new proof by Petridis.

\section{Overview}\label{sec.overview}

The proof of Theorem \ref{thm.finalfre} splits naturally into two parts: one covers the more combinatorial aspects, and one the more harmonic analytic aspects.  This particular de-coupling can be said to originate with the work of Green and Ruzsa \cite{greruz::0}, although their focus was much more on the second of the two, while the more recent improvements to the bounds have arisen (largely) from more careful combinatorial analysis in the first part of the argument.

The key definition is that of relative polynomial growth: to be clear we say that a set $X$ has \textbf{relative polynomial growth of order $d$} if
\begin{equation*}
|nX| \leq n^d|X| \text{ for all } n \in \N.
\end{equation*}
One might reasonably wish to insert a constant in front of the term on the right hand side, but we shall find that we are easily able to absorb this into the dimension at little cost to the quality of our eventual bounds.

It is worth noting that having relative polynomial growth is \emph{a priori} stronger than a small doubling condition.  It will turn out later (see Proposition \ref{prop.usefulfirst}) that the conditions are qualitatively equivalent in that doubling $K$ implies relative polynomial growth of order $O_K(1)$, but quantitatively this equivalence entails an exponential loss and is the reason for the exponential weakness of the original arguments of Green and Ruzsa.

With the definition above the argument splits into the following two parts.
\begin{enumerate}
\item \emph{(From small doubling to relative polynomial growth)} Given a set $A$ with $|A+A| \leq K|A|$ we find a \textbf{symmetric neighbourhood of the identity}, $X$, (meaning that $X=-X$ and $0_G \in X$) of size at most $O_K(|A|)$ with relative polynomial growth of order $O_K(1)$ such that $A$ is $O_K(1)$-covered by $X$.
\item \emph{(From relative polynomial growth to convex coset progressions)} Given a symmetric neighbourhood of the identity, $X$, with relative polynomial growth of order $d$ we show that $X$ is contained in an $O_d(1)$-dimensional centred convex coset progression of size at most $O_d(|X|)$.
\end{enumerate}
Note that if we had proved these two statements then they combine to give Theorem \ref{thm.gr}.  We now turn to look at these two parts in a little more detail.

\subsection{From small doubling to relative polynomial growth}
The starting point here are the covering arguments of Ruzsa which will be developed in \S\ref{sec.covering}, and which will be related to relative polynomial growth in \S\ref{sec.rpg}.  As we shall see there it is possible to use these covering arguments to show that if $|A+A| \leq K|A|$ then $A$ has relative polynomial growth of order $O(K^4)$ and from there it is a short step to the following corollary.
\begin{corollary}\label{cor.ff}
Suppose that $|A+A| \leq K|A|$.  Then $A$ is $1$-covered by a symmetric neighbourhood of the identity of size at most $\exp(O(\log K ))|A|$ and relative polynomial growth of order $O(K^{4})$.
\end{corollary}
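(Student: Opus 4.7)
The approach is to combine the covering arguments of \S\ref{sec.covering} with Pl{\"u}nnecke's inequality from \S\ref{sec.plun} in order to establish the stronger statement that $A$ itself has relative polynomial growth of order $O(K^4)$; the corollary will then follow by repackaging this into a suitable symmetric neighbourhood of the identity.

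The main step, which is the work of \S\ref{sec.rpg}, is the bound
\begin{equation*}
|nA| \le n^{O(K^4)}|A| \text{ for all } n \in \N.
\end{equation*}
A naive appeal to Pl{\"u}nnecke only yields the exponential estimate $|nA|\le K^n|A|$, so converting this into a polynomial-in-$n$ bound is the real content. The strategy is to use Ruzsa's covering lemma: one chooses an auxiliary set $B$ built from small sumsets of $A$ (so that Pl{\"u}nnecke controls both $|B|$ and $|B-B|$ by polynomials in $K$ times $|A|$) and extracts a small $Y \subset A$ of size at most $O(K^{O(1)})$ such that $A \subset Y + B - B$. Then $nA \subset nY + B - B$, and the elementary combinatorial estimate
\begin{equation*}
|nY| \le \binom{n + |Y| - 1}{|Y| - 1}
\end{equation*}
(the number of multisets of size $n$ drawn from $|Y|$ elements) is polynomial in $n$ of degree $|Y|-1$. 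A careful optimisation of the choice of $B$ tracks the constants and produces the degree $O(K^4)$.

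Given this, the corollary is a short step. Set $X := A - A$. Then $X = -X$ and $0_G \in X$, so $X$ is a symmetric neighbourhood of the identity. Pl{\"u}nnecke gives $|X|\le K^2|A|=\exp(O(\log K))|A|$, and for any fixed $a_0\in A$ we have $A - a_0 \subset A - A = X$, so $A \subset a_0 + X$, giving the $1$-covering property. It remains to verify polynomial growth of $X$. Since $n(A-A) = nA - nA$, the Ruzsa triangle inequality applied with pivot $-nA$ yields
\begin{equation*}
|nA - nA|\cdot |nA| \le |2nA|^2,
\end{equation*}
and combining this with the polynomial growth bound $|2nA| \le (2n)^d|A|$ (with $d=O(K^4)$) and the trivial bound $|nA|\ge |A|$ gives $|nX|\le (2n)^{2d}|A|\le n^{O(K^4)}|X|$, as required.

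The main obstacle is the polynomial growth bound for $A$. Pl{\"u}nnecke alone does not produce polynomial-in-$n$ sumset bounds, and coaxing the exponential Pl{\"u}nnecke estimate into a polynomial one (even at the cost of a $K$-dependent degree) is exactly what the iterated Ruzsa covering machinery is designed to accomplish. The key combinatorial mechanism is that, after covering, the only source of $n$-dependence comes from iterating a \emph{fixed finite set} $Y$, for which the number of multisets is automatically polynomial in $n$. Once this ingredient is in place, transferring the property to $X = A-A$ and checking the size and covering conditions is straightforward bookkeeping via Pl{\"u}nnecke and the Ruzsa triangle inequality.
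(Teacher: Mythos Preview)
Your proposal is correct and follows essentially the same approach as the paper: establish relative polynomial growth of order $O(K^4)$ for $A$ via Ruzsa covering (this is Proposition~\ref{prop.usefulfirst}), take $X=A-A$ as the symmetric neighbourhood, bound $|X|\le K^2|A|$ via Pl{\"u}nnecke--Ruzsa, and transfer polynomial growth to $X$ using Ruzsa's triangle inequality. The only cosmetic difference is the choice of pivot in that last step---you pivot at $-nA$ to get $|nA-nA|\,|nA|\le|2nA|^2$, while the paper pivots at $A$ to get $|n(A-A)|\le |nA-A|\,|nA+A|/|A|$; both yield the required $n^{O(K^4)}|A|$ bound.
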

This result is much weaker than we should like, but it turns out that it is essentially so because it provides a set which $1$-covers.  In \S\ref{sec.con} we discuss an example of a set $A$ with doubling $K$ such that any set $1$-covering it must have either relative polynomial growth of order $\Omega(K)$ or size $\exp(\Omega(K))|A|$.  Thus to improve the bound on the order of relative polynomial growth we shall need to increase the covering number.

In \S\ref{sec.wb} we discuss a general framework for improving the above Corollary \ref{cor.ff} before \S\ref{sec.cs} where we introduce a key new tool: the Croot-Sisask lemma.  \S\ref{sec.cs} includes the following result which can be seen as representing the state of the art prior to Schoen \cite{sch::1} and Croot and Sisask \cite{crosis::} (although we shall use a special case of the Croot-Sisask lemma to prove it).
\begin{proposition}\label{prop.oldstateofart}
Suppose that $|A+A| \leq K|A|$.  Then $A$ is $\exp(O(K^{1+o(1)}))$-covered by a symmetric neighbourhood of the identity of size at most $\exp(O(\log K ))|A|$ and relative polynomial growth of order $O(K^{1+o(1)})$.
\end{proposition}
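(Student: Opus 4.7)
The plan is to apply a special case of the Croot-Sisask lemma (developed in \S\ref{sec.cs}) to produce a large symmetric subset $T \subset A - A$ whose iterated sumsets are well controlled, and then to take $X := T - T$. The key property of $T$ will be that its elements are approximate additive periods of $1_A$, which compose: if $t_1,\ldots,t_k$ are $\epsilon$-periods then $t_1+\cdots+t_k$ is a $k\epsilon$-period and so lies in $A - A$ whenever $k\epsilon$ is bounded away from $1$. This is the mechanism that turns the Croot-Sisask output into polynomial growth.

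By Pl\"unnecke's inequality (\S\ref{sec.plun}) the hypothesis gives $|mA - nA| \leq K^{m+n}|A|$ for all non-negative integers $m,n$; in particular $|A-A| \leq K^2|A|$. Apply Croot-Sisask with sumset depth $k = K^{1+o(1)}$ and a constant-size accuracy parameter to obtain a symmetric $T \subset A - A$ with $0 \in T$, of size $|T| \geq \exp(-O(K^{1+o(1)}))|A|$, such that $nT \subset A - A$ for all $n \leq k$. This gives $|nT| \leq K^2|A| \leq \exp(O(K^{1+o(1)}))|T|$ uniformly in this range, whence $|nT| \leq n^{O(K^{1+o(1)})}|T|$ for $2 \leq n \leq k$. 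Extending the estimate to all $n$ is the delicate point: it uses that $k = K^{1+o(1)}$ is already large enough that a single application of Croot-Sisask, combined with the Pl\"unnecke bound on $nA - nA$ in the transition regime, yields the polynomial growth uniformly, at the cost of absorbing a factor into the exponent.

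Set $X := T - T$; since $T = -T$ this equals $T + T$, is a symmetric neighbourhood of the identity, and satisfies $X \subset 2A - 2A$, so $|X| \leq K^4 |A| \leq \exp(O(\log K))|A|$. The polynomial growth of $X$ of order $O(K^{1+o(1)})$ is inherited from that of $T$ via $|nX| = |2nT| \leq (2n)^{O(K^{1+o(1)})}|T| \leq n^{O(K^{1+o(1)})}|X|$. For the covering, Ruzsa's covering lemma (\S\ref{sec.covering}) applied with $|A + T| \leq |A + A - A| \leq K^3|A| \leq \exp(O(K^{1+o(1)}))|T|$ gives $A \subset S + (T - T) = S + X$ for some $S$ of size $\exp(O(K^{1+o(1)}))$, so $A$ is $\exp(O(K^{1+o(1)}))$-covered by $X$, as required. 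The main obstacle is the parameter tuning inside the Croot-Sisask lemma: the depth $k$, accuracy $\epsilon$, and $L^p$ exponent $p$ must be chosen together so that the resulting $T$ is simultaneously large enough to give the covering bound and has $k$-fold sumsets inside $A - A$, and so that the polynomial growth order that drops out is exactly $O(K^{1+o(1)})$ rather than something larger.
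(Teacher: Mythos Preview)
Your proposal has a genuine quantitative gap in the application of Croot--Sisask. You claim that one can take $T$ with $|T|\geq \exp(-O(K^{1+o(1)}))|A|$ and $nT\subset A-A$ for all $n\leq k$ with $k=K^{1+o(1)}$, using a ``constant-size accuracy parameter''. This is not compatible with the lemma's bounds. To force $kT$ into a bounded sumset of $A$ via the triangle inequality one must apply Croot--Sisask with accuracy $\eta$ on the scale of $1/k$, and the size bound then carries a factor $(2L)^{-O(\eta^{-2}p)}$, so the exponent is at least of order $k^{2}$. With $k=K^{1+o(1)}$ this yields $|T|\geq \exp(-O(K^{2+o(1)}))|A|$, not $\exp(-O(K^{1+o(1)}))|A|$; and with $p=2$ it is worse still, since one additionally needs $\eta\lesssim K^{-1/2}$ just to conclude that a single period lies in $2A-2A$ (because $\|1_A\ast\mu_A\|_{\ell^2}^2\geq |A|/K$ is the only lower bound available). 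Consequently neither the covering bound nor the polynomial-growth order you derive from $|T|$ is justified.

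The paper's proof avoids this by taking a \emph{small} depth $m=O(\log K)$: it applies Croot--Sisask with $p=2$, $\eta^2=1/K$, and parameter $\eta/m$, obtaining $X$ with $mX\subset 2A-2A$ and $|X|\geq \exp(-O(m^2K\log K))|A|$. The passage to polynomial growth for all $n$ is then done \emph{not} by stretching the Croot--Sisask depth, but by combining the containment $mX\subset 2A-2A$ with the Pl\"unnecke--Ruzsa estimate $|(3l+1)(2A-2A)|\leq K^{O(l)}|A|$, choosing $l\sim m^2K$, and invoking Corollary~\ref{cor.useful} (the Chang-type covering argument) to turn the single sub-exponential bound $|(3k+1)X|<2^k|X|$ at scale $k=ml=O(K\log^3K)$ into genuine relative polynomial growth of order $O(k)$. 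Your sketch gestures at this step (``extending the estimate to all $n$ is the delicate point'') but places it after the parameter choice has already gone wrong; in fact this bootstrap is what allows the depth $m$ to stay logarithmic and the size loss to remain $\exp(-O(K^{1+o(1)}))$.
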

In \S\ref{sec.kony} we shall then make much more effective use of the Croot-Sisask lemma to show the following.
\begin{proposition}\label{prop.grow1}
Suppose that $|A+A| \leq K|A|$.  Then $A$ is $\exp(O(\log^{4}K))$-covered by a symmetric neighbourhood of the identity of size at most $\exp(O(\log K))|A|$ and relative polynomial growth of order $O(\log^{4}K)$.
\end{proposition}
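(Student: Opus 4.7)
The plan is to apply the Croot-Sisask lemma (\S\ref{sec.cs}) with carefully chosen parameters, combined with Pl\"unnecke's inequality (\S\ref{sec.plun}) and Ruzsa's covering lemma (\S\ref{sec.covering}), to extract the required structured set.

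First I would apply Croot-Sisask to $A$ with parameters $m \asymp \log K$ and $\epsilon \asymp 1/\log K$ to produce a symmetric neighbourhood of the identity $T$ with $|T| \geq |A|/\exp(O(\log^3 K))$, having the property that for every $n$ in a bounded range (say $n \leq N \asymp \log^2 K$) and every $t \in nT$, the intersection $(t+A)\cap A$ is non-empty. This forces $nT \subseteq A-A$ for such $n$ and, via Pl\"unnecke, $|nT| \leq K^2 |A|$ throughout that range.

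Second, I would combine this single-scale bound with Pl\"unnecke-Ruzsa (or with a further application of Croot-Sisask to $T$ itself, with refreshed parameters) to extend control to all scales: the output is a symmetric set $X$, essentially a sub-sumset of $T$, with $|X| \geq |A|/\exp(O(\log^4 K))$ and $|nX| \leq n^{O(\log^4 K)}|X|$ for every $n \in \N$. The complementary size bound $|X| \leq |A-A| \leq K^2 |A| = \exp(O(\log K))|A|$ is automatic from Pl\"unnecke.

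Finally, Ruzsa's covering lemma applied with $|A + X| \leq K^{O(1)}|A|$ and the lower bound on $|X|$ supplies a set of size at most $\exp(O(\log^4 K))$ whose translates of $X$ cover $A$, delivering all three conclusions of the proposition simultaneously.

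The main obstacle is the second step: a single application of Croot-Sisask constrains iterated sumsets $nT$ only in a bounded range of $n$, and promoting this to uniform relative polynomial growth of order $O(\log^4 K)$ at every scale $n \in \N$, without an exponential blow-up, is precisely where the "much more effective" exploitation of Croot-Sisask advertised before the proposition enters. A cruder execution would yield a worse exponent, and the subsequent improvement from $\log^4 K$ to $\log^{3+o(1)} K$ in Theorem \ref{thm.finalfre} requires the iterative refinement due to Konyagin that is applied on top of this basic argument.
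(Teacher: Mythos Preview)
Your outline has the right skeleton (Croot--Sisask, then a growth-to-polynomial step, then Ruzsa covering), but it is missing the two specific ingredients that make this argument give $\log^{4}K$ rather than $K^{1+o(1)}$.

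First, with the parameters you state ($m\asymp\log K$, $\eta\asymp 1/\log K$, and implicitly $p=2$, with $f=1_A$), the error after $m$ applications of the triangle inequality is of order $\eta m\asymp 1$, while the baseline quantity you need to keep positive is $\|1_A\ast\mu_A\|_{\ell^2}^2\geq |A|/K$.  So you cannot conclude $mT\subset A-A$; this is exactly the obstruction explained at the start of \S\ref{sec.kony}, and your argument as written reproduces Proposition~\ref{prop.oldstateofart}, not Proposition~\ref{prop.grow1}.  The paper's fix is Proposition~\ref{prop.key}: apply Croot--Sisask with $f=1_{S+S}$ (here $S=A-A$) and take the inner product against $\mu_S$, using the L\'opez--Ross identity $\langle 1_{S+S}\ast\mu_S,\mu_S\rangle=1$.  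The baseline is now $1$, independent of $K$, so $\eta=\Omega(1/m)$ suffices; the $K$-dependence is absorbed by choosing $p=2+\log K$, which makes the H\"older term $\|1_{S+S}\|_{\ell^p}\|\mu_S\|_{\ell^{p'}}=O(1)$.  This yields $|X|\geq\exp(-O(m^2\log^2K))|S|$ and $mX\subset 4S$.

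Second, your step from ``bounded range of $n$'' to ``all $n\in\N$'' is not another Croot--Sisask application; it is Chang's covering lemma in the form of Corollary~\ref{cor.useful}.  One combines $mX\subset 4S$ with Pl\"unnecke to get $|(3ml+1)X|\leq K^{O(l)}\exp(O(m^2\log^2K))|X|$, then chooses $l\asymp m^2\log K$ and $m\asymp\log K$ so that $k:=ml\asymp\log^4K$ satisfies $|(3k+1)X|<2^k|X|$; Corollary~\ref{cor.useful} then gives polynomial growth of order $O(k)$ at every scale.  The final covering step via Lemma~\ref{lem.convert} is as you describe.
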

This result is where most of the more recent new material appears, but there is then also a combinatorial refinement following Konyagin which leads to our strongest result at the end of \S\ref{sec.kony}.
\begin{proposition}\label{prop.grow2}
Suppose that $|A+A| \leq K|A|$.  Then $A$ is $\exp(O(\log^{3+o(1)}K))$-covered by a symmetric neighbourhood of the identity  of size at most $\exp(O(\log^{1+o(1)}K))|A|$ and relative polynomial growth of order $O(\log^{3+o(1)}K)$.
\end{proposition}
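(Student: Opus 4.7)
The plan is to bootstrap Proposition \ref{prop.grow1} from growth order $\log^{4} K$ down to $\log^{3+o(1)} K$ via Konyagin's iterative refinement. The guiding observation is that once we have produced a symmetric neighbourhood $X$ of relative polynomial growth of order $d$, re-entering the Croot-Sisask machinery of \S\ref{sec.kony} with $X$ in place of $A$ lets us replace some factors of $\log K$ in the resulting bound by factors of $\log d$, since it is the doubling of $X$ (controlled by $d$) that now governs the quality parameter of the Croot-Sisask lemma rather than $K$ itself. Because $\log d = O(\log\log K)$ is much smaller than $\log K$, this substitution is genuinely cheaper.

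First, I would apply Proposition \ref{prop.grow1} to $A$ to obtain an initial symmetric neighbourhood $X_{0}$ of size $\exp(O(\log K))|A|$, with growth order $d_{0} = O(\log^{4} K)$ and covering number $\exp(O(\log^{4} K))$. Second, I would re-run the argument of \S\ref{sec.kony} with $X_{0}$ in the role of $A$. The $\log^{4} K$ bound in Proposition \ref{prop.grow1} factorises as a product of contributions of the form $\log^{a} K \cdot \log^{b} K$ with $a + b = 4$, each tracking a different parameter in the proof; the substitution above cheapens the factor associated with the doubling from $\log^{b} K$ to $\log^{b} d_{0} = O((\log\log K)^{b})$, leaving an over-all growth order of $O(\log^{3+o(1)} K)$. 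The price of this re-application is a further multiplicative factor of $\exp(O(\log^{1+o(1)} K))$ in both the size and the covering number, matching the target.

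The main obstacle is to identify exactly which factors in the proof of Proposition \ref{prop.grow1} can be traded in this way, and to confirm that the bookkeeping yields the exponent $3+o(1)$ rather than some intermediate value such as $7/2$. A secondary issue is whether a single re-application of \S\ref{sec.kony} suffices or whether several iterations are needed; if the latter, one must check that the cumulative losses in size and covering number remain within $\exp(O(\log^{1+o(1)} K))$ and $\exp(O(\log^{3+o(1)} K))$ respectively over all $O(\log^{*}K)$ steps. The delicate analytic point throughout is optimising the quality parameter of the Croot-Sisask lemma at each stage so that the growth order drops by the maximum amount at the minimum cost.
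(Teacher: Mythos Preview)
Your proposal has a genuine gap stemming from a confusion of levels. You write that once $X_{0}$ has relative polynomial growth of order $d_{0}$, re-entering the Croot--Sisask machinery ``lets us replace some factors of $\log K$ \dots\ by factors of $\log d_{0}$, since it is the doubling of $X$ (controlled by $d$) that now governs the quality parameter.'' But the doubling of $X_{0}$ is at most $2^{d_{0}}$, so the logarithm of that doubling is $d_{0}$, not $\log d_{0}$. Hence feeding $X_{0}$ back into Proposition~\ref{prop.key} (or Proposition~\ref{prop.grow1}) with $X_{0}$ in the role of $A$ replaces $\log K$ by $d_{0}=O(\log^{4}K)$, which is \emph{worse}, not better. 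The polynomial-growth hypothesis on $X_{0}$ does cheapen the Pl\"unnecke step, but it does not touch the Croot--Sisask loss $\exp(-O(m^{2}\log K\log L))$, and that is where the exponent $4$ comes from.

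The paper's proof is a genuinely different mechanism. Konyagin's refinement is \emph{not} a re-application of Proposition~\ref{prop.grow1}; it is the iteration recorded as Proposition~\ref{prop.kony}, which exploits the \emph{asymmetry} of Proposition~\ref{prop.key} between the doubling parameter $K$ of $S$ and the expansion parameter $L=|S+T|/|S|$. Starting from $S_{0}=T_{0}=A-A$, one repeatedly applies the corollary preceding Proposition~\ref{prop.kony} (Proposition~\ref{prop.key} plus a pigeon-hole on dilates) to drive $L_{i}$ down doubly-exponentially, $L_{i}=\exp(O((\log K)^{2^{-i}}))$, while $S_{i}$ stays inside $5^{i}(A-A)$ so that its doubling remains $K^{O(5^{i})}$. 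After $O(\log\log K)$ steps one has $|S+T|=O(|S|)$, and a single further application of Proposition~\ref{prop.key} now costs only $\exp(-O(m^{2}\log^{1+o(1)}K))$ instead of $\exp(-O(m^{2}\log^{2}K))$. That saving of one $\log K$ in the Croot--Sisask bound is exactly what lowers the exponent from $4$ to $3+o(1)$; the remaining parameter optimisation is identical to that in Proposition~\ref{prop.grow1}.
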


\subsection{From relative polynomial growth to convex coset progressions}\label{sec.rpgccpintro}
To pass from relative polynomial growth to convex coset progressions it is useful to start by considering some examples of sets with relative polynomial growth.  Of course, if $Q$ is a convex body in $\R^d$ then $\mu(nQ)=n^d\mu(Q)$ for all $n \geq 1$ and so one expects that any $d$-dimensional centred convex coset progression has relative polynomial growth roughly $d$ (in fact $d^{1+o(1)}$).

Now, if $P$ is a $d$-dimensional centred convex coset progression and $X \subset P$ has size $\exp(-d^{1+o(1)})|P|$ then
\begin{equation*}
|nX| \leq |nP| \leq n^{d^{1+o(1)}}|P| = O(n)^{d^{1+o(1)}}|X| = n^{d^{1+o(1)}}|X| \text{ for all } n \geq 1.
\end{equation*}
Crucially, though, a union of $\exp(d^{1+o(1)})$ translates of centred convex coset progressions will (generically) have relative polynomial growth of order $\exp(d^{1+o(1)})$ and \emph{not} $d^{1+o(1)}$ so that relative polynomial growth distinguishes between covering and containment in a way that doubling does not.

It turns out that there is a matching result which tells us that essentially the only way of creating sets of relative polynomial growth is by the above method.
\begin{theorem}\label{thm.rtc}
Suppose that $X$ has relative polynomial growth of order $d$.  Then there is a centred convex coset progression $M$ such that
\begin{equation*}
X-X \subset M, \dim M =O(d\log^2 d) \text{ and } |M| \leq \exp(O(d\log^2 d))|X|.
\end{equation*}
\end{theorem}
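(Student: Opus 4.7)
The plan is to combine the Bogolyubov--Chang analysis of iterated sumsets (presumably developed in \S\ref{sec.rpgccp}--\S\ref{sec.lcp}) with the geometry of numbers and a Ruzsa covering argument.  Assuming $X$ symmetric and containing the identity (otherwise replace $X$ by $X \cup (-X) \cup \{0_G\}$, at negligible cost in $d$), relative polynomial growth yields $|nX - mX| \leq (n+m)^d |X|$ for all $n, m \geq 1$; in particular $X$ has doubling at most $2^d$, but the stronger polynomial bound on $|kX|$ remains available for finer estimates later.

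I would first apply a Bogolyubov--Chang step to the iterated sumset $2X - 2X$.  Since $X$ has doubling $2^d$, Chang's theorem should furnish a set $\Gamma$ of characters of size $O(d)$ and a radius $\eta \geq 1/\mathrm{poly}(d)$ with $B(\Gamma, \eta) \subset 2X - 2X$, where $|B(\Gamma, \eta)|$ is bounded below by some fraction of $|X|$.  Minkowski's second theorem then extracts from $B(\Gamma, \eta)$ a centred convex progression $P$ of dimension $|\Gamma| = O(d)$, losing roughly a factor of $(\eta/|\Gamma|)^{|\Gamma|}$ in size and yielding $|P| \geq \exp(-O(d \log d))|X|$.

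With $P$ in hand I would invoke the Ruzsa covering lemma.  Since $|X + P| \leq |3X - 2X| \leq 5^d|X| \leq \exp(O(d \log d))|P|$, one obtains a set $T$ with $|T| \leq \exp(O(d \log d))$ and $X \subset T + (P - P)$, so that $X - X \subset (T - T) + 2(P - P)$.  The term $2(P - P)$ is a centred convex coset progression of the same dimension as $P$, and so the task reduces to enclosing $T - T$ in a centred convex coset progression whose dimension and log-size are at most $O(d \log^2 d)$.

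This last step is the main obstacle.  Although $|T|$ may be as large as $\exp(O(d \log d))$, the inclusion $T \subset X$ transfers iterated sumset bounds from $X$ to $T$.  The natural strategy is to apply the Bogolyubov/geometry-of-numbers/covering machinery recursively, now to $T$ (or a suitable symmetrisation) in place of $X$; each round should add $O(d \log d)$ to the accumulated dimension and log-size while shrinking the residual covering set, and after $O(\log d)$ rounds the residual becomes negligible.  Ensuring that the effective growth parameter stays $O(d)$ throughout the recursion, and that the coset progressions combine compatibly at each stage, is where the real work lies; the factor $\log^2 d$ in the final bound should arise as the product $O(d \log d) \cdot O(\log d)$ of per-round cost and number of rounds.
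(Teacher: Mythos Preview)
Your first step (Ruzsa modelling, Bogolyubov--Chang, geometry of numbers, Ruzsa covering) is the standard Green--Ruzsa argument and with $K=2^d$ does yield a progression $P$ of dimension $O(d)$ and a covering set $T$ with $|T|\leq\exp(O(d\log d))$.  The difficulty, as you correctly identify, is the residual $T-T$; but the recursion you sketch does not close.  The only control you have on $T$ is $T\subset X$, which gives $|kT|\leq |kX|\leq k^d|X|$; this bounds iterated sums of $T$ in terms of $|X|$, not $|T|$, so $T$ need not have relative polynomial growth of order $O(d)$.  In fact $T$ may be an arbitrary set of $\exp(O(d\log d))$ points inside $X$, with doubling as large as $|T|$.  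Feeding $T$ back into Bogolyubov--Chang therefore uses an effective doubling of $\exp(O(d\log d))$ rather than $2^d$, so the second round produces a Bohr set of dimension $O(d\log d)$ and a covering set of size $\exp(O(d\log d \cdot \log(d\log d)))$: the parameters get \emph{worse}, not better, and there is no mechanism by which the residual shrinks.

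The paper avoids covering altogether and obtains containment of $X-X$ in a single Bohr set directly.  The two ingredients are: (i) Proposition~\ref{prop.lowerbound}, which says that if $|X+S|\leq K|S|$ then $X-X\subset\Bohr(\LSpec(X+S,\epsilon),O(\epsilon\sqrt{K}))$, where $\LSpec$ is the \emph{very} large spectrum (characters with $|\wh{\mu}(\gamma)|\geq 1-\epsilon^2/2$); and (ii) Proposition~\ref{prop.upper}, a Schoen-type Fourier moment estimate exploiting the full polynomial growth hypothesis to bound $|\Bohr(\LSpec(X,\epsilon),1/2)|$.  One first pigeonholes on the growth to find $l=O(d\log d)$ with $|(l+1)X|=O(|lX|)$, then applies (i) with $S=lX$ and $K=O(1)$ to get $X-X$ inside a Bohr set of radius $\Theta(\epsilon)$; (ii) applied to $(l+1)X$ bounds the size of the wider Bohr set by $\exp(O(d\log^2(\epsilon^{-1}d)))|X|$.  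Choosing $\epsilon=\Theta(1/(d\log^2 d))$ and $k=O(d\log^2 d)$ then feeds into Proposition~\ref{prop.coset}, which (via Ruzsa's embedding and Chang's \emph{combinatorial} covering lemma) shows the Bohr set is itself an $O(d\log^2 d)$-dimensional centred convex coset progression.  The $\log^2 d$ factor thus arises from the product of the pigeonhole scale $l$ and $\log(\epsilon^{-1}d)$ in Schoen's estimate, not from any iteration.
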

The first thing to say is that the dimension here is tight up to factors of $\log d$.  This can be seen by, for example, letting $X$ be the cube of side length $N$ in $\Z^d$.  This has polynomial growth of order $\Omega(d)$ and any convex coset progression containing $X$ has tripling at least $2^d$ by the discrete Brunn-Minkowski inequality (see, \emph{e.g.} \cite[Lemma 2.4]{gretao::8}) and so has dimension $\Omega(d)$.

This result is the part of the argument which uses harmonic analysis and itself splits into a number of parts.  These are covered in the second part of the paper starting at \S\ref{sec.rpgccp}.

We should remark that Theorem \ref{thm.finalfre} follows immediately from Proposition \ref{prop.grow2} and Theorem \ref{thm.rtc}.

\section{Pl{\"u}nnecke's inequality}\label{sec.plun}

This section is the final section before we plunge into the proof of Fre{\u\i}man's theorem and it will cover the invaluable tool of Pl{\"u}nnecke's inequality following the exciting new work by Petridis \cite{pet::0,pet::}.  The discussion in his papers is more comprehensive than ours and we direct the reader interested in more details there, but we hope to cover the salient features in what follows.

Our starting point is the observation that \emph{given} Fre{\u\i}man's theorem if $|A+A| \leq K|A|$ then there is an $O_K(1)$-dimensional centred convex coset progression $M$ of size $O_K(|A|)$ such that $A$ is $O_K(1)$-covered by $M$.  This means that there is some set $X$ of size $O_K(1)$ such that $A \subset X+M$.  On the other hand as remarked in \S\ref{sec.rpgccpintro} the set $M$ has relative polynomial growth of order $O_K(1)$, hence
\begin{equation*}
|nA| \leq |nX||nM| \leq |X|^{n}(n)^{O_K(1)}|M| = O_K(1)^{n}|A| \text{ for all } n \in \N.
\end{equation*}
It turns out that a much stronger inequality is true:
\begin{theorem}[Pl{\"u}nnecke's inequality]\label{thm.plun}
Suppose that $|A+A| \leq K|A|$.  Then 
\begin{equation*}
|nA| \leq K^{n}|A|\text{ for all }n \in \N.
\end{equation*}
\end{theorem}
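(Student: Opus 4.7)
The plan is to follow Petridis's strikingly clean recent proof, which bypasses the classical Pl{\"u}nnecke-graph machinery entirely. The engine of the argument is the following \emph{minimising-subset lemma}: among the nonempty subsets of $A$, choose $X$ that minimises the ratio $|X+A|/|X|$, and write $K'$ for this minimum value. Since $X = A$ is itself a candidate in the minimisation, $K' \leq K$. The lemma asserts that for \emph{every} finite set $C$ in the ambient group,
\begin{equation*}
|X + A + C| \leq K' |X + C|.
\end{equation*}

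Granted the lemma, the theorem follows by a brisk iteration. Applying it with $C$ equal successively to $(n-1)A, (n-2)A, \ldots, A$, and using the base identity $|X + A| = K'|X|$, one telescopes to $|X + nA| \leq (K')^n |X|$. Since $X$ is nonempty, fixing any $x_0 \in X$ gives $x_0 + nA \subseteq X + nA$ and therefore
\begin{equation*}
|nA| \leq |X + nA| \leq (K')^n|X| \leq K^n|A|,
\end{equation*}
using $K' \leq K$ and $|X| \leq |A|$.

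I would prove the lemma by induction on $|C|$. The base case $|C| = 1$ is immediate from the definition of $K'$: if $C = \{c\}$ then $|X + A + c| = |X + A| = K'|X| = K'|X+c|$. For the inductive step write $C = D \sqcup \{c\}$, so that $X + A + C$ is the union of $X + A + D$ (to which the inductive hypothesis applies) and $X + A + c$. The task is to bound the genuinely new portion $(X + A + c) \setminus (X + A + D)$, which is controlled by the set $T = \{x \in X : x + A + c \not\subseteq X + A + D\}$, and to compare this with the complementary overhead $|X + C| - |X + D|$, which counts exactly those $x \in X$ with $x + c \notin X + D$ --- a set $T'$ satisfying the natural inclusion $T \subseteq T'$.

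The step I expect to be the main obstacle is this inductive step, and more specifically the correct choice of subset on which to invoke the minimality of $K'$. The na{\"\i}ve candidate (the set $T$ itself) yields only a \emph{lower} bound $|T + A| \geq K'|T|$, which runs in the wrong direction. Petridis's subtle move is to apply minimality instead to the complementary ``safe'' set $X \setminus T$ (or an appropriate variant) and to combine the resulting bound with the identity $|X + A| = K'|X|$ and the inclusion $T \subseteq T'$ so that the bookkeeping on the two sides of the desired inequality aligns exactly. The calculation is short but must be threaded with care; it is precisely what replaces, and considerably simplifies, the older graph-theoretic argument via the Pl{\"u}nnecke graph and Menger's theorem.
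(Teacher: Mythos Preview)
Your proposal is correct and follows essentially the same Petridis approach as the paper: pass to a ratio-minimising subset $X\subset A$, establish the key lemma $|A+X+C|\le K'|X+C|$, and then iterate with $C=(n-1)A,\dots,A$ exactly as in the paper's Corollary~\ref{cor.petcor}. The only cosmetic difference is that you prove the lemma by induction on $|C|$ (applying minimality to the complementary set $Z=X\setminus T$ and using $T\subset T'$), whereas the paper presents it as a one-shot decomposition $X+C=\bigsqcup_c X_c$; these are two packagings of the same idea.
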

This result is due to Pl{\"u}nnecke's \cite{plu::} and was rediscovered and greatly developed by Ruzsa \cite{ruz::5}.  Both Ruzsa and Pl{\"u}nnecke's arguments were graph theoretic and quite involved appealing to Menger's theorem (see \cite[\S6.5]{taovu::} for details).  In \cite{pet::0} Petridis removed the need for Menger's theorem and then a little later in \cite{pet::} he found a wonderful entirely new proof.

The core of Petridis' argument is the next lemma.  The idea is that if we are given sets $A$ and $X$ such that $|A+X| \leq K|X|$ then it is a good idea to pass to the `best' possible subset of $X$.  That is to say, to pass to the subset $X'$ of $X$ for which $|A+X'|/|X'|$ is minimal.  Turning this around if $X$ is already the `best' subset then every $X' \subset X$ has $|A+X'|/|X'|$ bigger than $|A+X|/|X|$.  In this case Petridis proved the following beautiful lemma.
\begin{lemma}\label{lem.petridis} Suppose that $|A+X| \leq K|X|$ and $|A+X'| \geq K|X'|$ for all $X' \subset X$.  Then for all (finite) sets $C$ we have
\begin{equation*}
|A+X+C| \leq K|X+C|.
\end{equation*}
\end{lemma}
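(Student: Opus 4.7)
The plan is to prove Lemma \ref{lem.petridis} by induction on $|C|$.  The base case $|C|=1$, say $C=\{c\}$, is immediate from the first hypothesis alone: $|A+X+c|=|A+X|\leq K|X|=K|X+c|$.  For the inductive step I split off a single element, writing $C=C'\cup\{c\}$ with $c\notin C'$ and $|C'|<|C|$, so that $|A+X+C'|\leq K|X+C'|$ by the inductive hypothesis.

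The strategy is to compare $|A+X+C|$ and $|X+C|$ via inclusion--exclusion and match up the terms one at a time.  Explicitly,
\begin{equation*}
|X+C|=|X+C'|+|X|-|(X+C')\cap (X+c)|,
\end{equation*}
and, similarly,
\begin{equation*}
|A+X+C|\leq |A+X+C'|+|A+X|-|(A+X+C')\cap (A+X+c)|,
\end{equation*}
so it suffices to bound the intersection on the right from below by $K$ times $|(X+C')\cap (X+c)|$.  To this end I introduce
\begin{equation*}
Z^{*}:=\{x\in X : A+x+c\subset A+X+C'\},
\end{equation*}
which gives the containment $A+Z^{*}+c\subset (A+X+C')\cap (A+X+c)$.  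Since $Z^{*}\subset X$, the minimality hypothesis yields $|A+Z^{*}|\geq K|Z^{*}|$.  The last ingredient is that $Z^{*}$ visibly contains $Z:=\{x\in X:x+c\in X+C'\}$ (because $x+c=x'+c'$ with $x'\in X$, $c'\in C'$ forces $A+x+c=A+x'+c'\subset A+X+C'$), and $|Z|=|(X+C')\cap (X+c)|$.  Chaining these inequalities together with the inductive hypothesis and the bound $|A+X|\leq K|X|$ then delivers $|A+X+C|\leq K|X+C|$ as required.

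The main obstacle is locating the right auxiliary subset of $X$: it must simultaneously lie in $A+X+C'$ after translation by $c$ (so that it forces the intersection to be large) and be a subset of $X$ itself (so that the minimality hypothesis applies with the correct multiplicative constant $K$).  The definition of $Z^{*}$ above is precisely the largest subset of $X$ meeting the first constraint, and the key point of Petridis' argument is that this set automatically contains the combinatorially natural $Z$ arising from the decomposition of $|X+C|$, so the induction closes with the sharp constant $K$.
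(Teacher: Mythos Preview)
Your proof is correct and is precisely Petridis's original induction on $|C|$, which is also what the paper is presenting: the paper's ``iterative decomposition'' $X+C=\bigsqcup_c X_c$ with $Y_c=(X+c)\setminus X_c$ is just your inclusion--exclusion unfolded over an enumeration of $C$, and your auxiliary set $Z$ is exactly $Y_c-c$ (a subset of $X$ to which the minimality hypothesis applies). If anything, your write-up is cleaner: you make explicit the intermediate set $Z^{*}\supset Z$ and the containment $A+Z^{*}+c\subset (A+X+C')\cap(A+X+c)$, whereas the paper's compressed chain elides this step.
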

\begin{proof}
We iteratively decompose $X+C$ into disjoint sets contained in translates of $X$: $X+C=\sqcup_c{X_c}$ where $X_c \subset X+c$.  Writing $Y_c:=(X+c) \setminus X_c$ we have
\begin{eqnarray*}
|A+X+C| \leq  \sum_c{|A+X_c|} &=&\sum_{c}{|A+((X+c) \setminus Y)|}\\ &\leq & \sum_c{(|A+X+c| - |A+Y_c|)}\\ & \leq & \sum_{c}{(K|X+c| - K|Y_c|)} = \sum_c{K|X_c|} = K|X+C|.
\end{eqnarray*}
The result is proved.
\end{proof}
Given the idea of passing to this `best' possible $X$ the proof is rather natural, but the reader should make no mistake: the idea to do this is very nice and eluded many people!

Petridis then gives the following immediate corollary.
\begin{corollary}\label{cor.petcor}
Suppose that $|A+B| \leq K|B|$.  Then there is some non-empty $X \subset B$ such that
\begin{equation*}
|nA+X| \leq K^n|X| \text{ for all }n \in \N.
\end{equation*}
\end{corollary}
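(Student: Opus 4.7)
The plan is a direct extremal argument coupled with induction, using Lemma \ref{lem.petridis} as the engine. First I would choose $X$ to be the \emph{best} non-empty subset of $B$ in the sense of Petridis: pick $X \subset B$ non-empty minimising the ratio $|A+X'|/|X'|$ over non-empty $X' \subset B$, and let $K' := |A+X|/|X|$ be the minimum value. Since $B$ itself is a candidate in the minimisation, $K' \leq K$; and by minimality, every non-empty $X' \subset X$ satisfies $|A+X'| \geq K'|X'|$.

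These are precisely the hypotheses of Lemma \ref{lem.petridis} with $K$ replaced by $K'$. Hence for every finite set $C$ we have
\begin{equation*}
|A+X+C| \leq K'|X+C|.
\end{equation*}
I would now induct on $n$. The base case $n=1$ is $|A+X| = K'|X| \leq K|X|$. For the inductive step, assume $|(n-1)A + X| \leq K'^{n-1}|X|$ and apply the displayed inequality with $C := (n-1)A$ to obtain
\begin{equation*}
|nA + X| = |A + X + (n-1)A| \leq K'|X + (n-1)A| \leq K'^{n}|X| \leq K^{n}|X|,
\end{equation*}
which closes the induction.

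There is essentially no obstacle once Lemma \ref{lem.petridis} is in hand; the entire point is the clever choice of the extremal $X$, which ensures that the hypothesis of the lemma is preserved after the passage from $B$ to $X$. The only subtlety worth flagging is that one must apply the lemma with the \emph{actual} minimum $K'$ rather than with $K$ itself, because the lower bound $|A+X'| \geq K|X'|$ need not hold for all $X' \subset X$; but since $K' \leq K$, the final bound is only strengthened.
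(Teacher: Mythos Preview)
Your proof is correct and follows essentially the same route as the paper: choose $X \subset B$ minimising $|A+X'|/|X'|$, apply Lemma~\ref{lem.petridis} with $C=(n-1)A$, and induct. If anything you are slightly more careful than the paper in explicitly applying the lemma with the minimum ratio $K'$ rather than $K$ and then using $K' \leq K$ at the end; the paper elides this distinction.
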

\begin{proof}
We can pick $X \subset B$ such that $|A+X|/|X|$ is minimal over (non-empty) subsets of $B$.  In this case $A$ and $X$ satisfy the hypotheses of Petridis' lemma and hence the conclusion.  Applying the conclusion with $C=(n-1)A$ we get that
\begin{equation*}
|X+ nA| \leq K |X+(n-1)A| \text{ for all } n \in \N,
\end{equation*}
and this gives the result (by induction).
\end{proof}
Note that Pl{\"u}nnecke's inequality (Theorem \ref{thm.plun}) follows immediately from this applied to the set $A$ and $B=A$ since $X \subset B=A$.

It is also possible to control jointly positive and negative sums of $A$ using the following result called Ruzsa's triangle inequality \cite{ruz::02} (see also \cite[Lemma 2.6]{taovu::}).
\begin{lemma}[Ruzsa's triangle inequality]  Suppose that $|A-B| \leq K|B|$ and $|B-C| \leq L|B|$.  Then
\begin{equation*}
|A+C| \leq KL|B|.
\end{equation*}
\end{lemma}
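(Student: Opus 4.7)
My plan is to invoke the classical injection argument underlying Ruzsa's triangle inequality (the same proof used in the Tao--Vu reference cited with the lemma).

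In outline: for each element $x$ in the target set, fix a single canonical representation of $x$ as a combination of one element of $A$ and one element of $C$; then use that representative together with a variable $b \in B$ to build a map from $B \times (\text{target})$ into $(A-B) \times (B-C)$. The map is immediately injective as soon as one can read $x$ off from the image (which recovers the chosen representative), after which $b$ is forced by either coordinate. Counting the image sizes and invoking the two hypotheses then bounds $|B|\cdot|\text{target}|$ by $KL|B|^2$, which is the required inequality.

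The one subtlety I anticipate is that the natural injection $\phi(b,x) = (a(x)-b,\; b-c(x))$, which pairs each hypothesis with the appropriate coordinate, is tailored to the decomposition $x = a(x) - c(x)$: summing the two coordinates yields $a(x) - c(x)$, so $x$ is recoverable only when $x \in A - C$. This produces the bound $|A - C| \leq KL|B|$ rather than the displayed $|A+C| \leq KL|B|$. Passing from one to the other by replacing $C$ with $-C$ is not legitimate, because that substitution turns the given hypothesis $|B - C| \leq L|B|$ into the unavailable $|B + C| \leq L|B|$; no other sign-flip ($A \mapsto -A$, $B \mapsto -B$, or combinations) preserves both hypotheses while switching the conclusion. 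I therefore expect the displayed $|A + C|$ to be a transcription slip for $|A - C|$, matching the Tao--Vu reference and the classical statement of Ruzsa \cite{ruz::02}; the two formulations coincide whenever $A$ or $C$ is symmetric, which is the setting in which the lemma is naturally applied in the sequel.

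The main obstacle, then, is not technical but interpretive: the injection proves $|A - C| \leq KL|B|$ and nothing stronger from the given hypotheses, so I would state and prove the lemma in that form and record that in applications $C$ will be symmetric so that $|A + C| = |A - C|$.
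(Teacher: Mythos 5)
Your reading is more careful than the paper's own: from $|A-B|\le K|B|$ and $|B-C|\le L|B|$ the injection $\phi(b,x)=(a(x)-b,\,b-c(x))$ with $x=a(x)-c(x)$ gives $|A-C|\le KL|B|$, not $|A+C|\le KL|B|$, and this is exactly the map the paper uses. Indeed the paper's proof contains the very sign slip you diagnose in the statement: having declared $a(s)+c(s)=s$, it asserts that ``adding'' the two coordinates of $(a(s)-b,\,b-c(s))$ recovers $a(s)+c(s)$, whereas the sum is $a(s)-c(s)$, so injectivity genuinely fails on $B\times(A+C)$ (already for $A=B=C=\{0,1\}$, choosing $a(1)=1$, $c(1)=0$ sends both $(0,0)$ and $(1,2)$ to $(0,0)$). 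The coherent reading is yours, with $s=a(s)-c(s)$ ranging over $A-C$, and it matches the cited \cite[Lemma 2.6]{taovu::}. Your check that no substitution $A\mapsto\pm A$, $B\mapsto\pm B$, $C\mapsto\pm C$ preserves both hypotheses while switching the conclusion is also right. The one small inaccuracy is your final remark that the applications rely on $C$ being symmetric: in fact the sets fed in are not symmetric, but the corrected $|A-C|$ form delivers what is needed with the right substitutions. For example, in the Pl\"unnecke--Ruzsa corollary one takes $A'=nA$, $B'=-X$, $C'=mA$, so that $|A'-B'|=|nA+X|$, $|B'-C'|=|-X-mA|=|{-}mA-X|$ and $|A'-C'|=|nA-mA|$, exactly the quantities controlled; in Corollary~\ref{cor.ff} one may take $A'=C'=nA$, $B'=A$ to bound $|n(A-A)|\le |nA-A|^2/|A|$.
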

\begin{proof}
We consider the map $B \times (A+C) \rightarrow (A-B) \times (B-C)$ defined by $(b,s) \mapsto (a(s)-b, b-c(s))$ where $a(s)$ and $c(s)$ are functions on $A+C$ such that $a(s) \in A$, $c(s) \in C$ and $a(s)+c(s)=s$.  It is easy to check that our map on $B \times (A+C)$ is an injection: suppose that
\begin{equation*}
(a(s)-b, b-c(s)) = (a(s')-b',b'-c(s')),
\end{equation*}
then adding we get that $s=a(s)+c(s) = a(s')+c(s') = s'$ and so $s=s'$, and hence $b=b'$.  It follows from this that $|B||A+C| \leq |A-B||B-C|$ and we have the result.
\end{proof}
It may be intuitively helpful to know that this can be seen as the triangle inequality for a certain pseudo-metric one can define on sets (in groups) called the Ruzsa distance.  (See \cite[\S2.3]{taovu::} for more details.)

As an immediate corollary of our work so far we have the so-called Pl{\"u}nnecke-Ruzsa inequalities which are slightly more general than Pl{\"u}nnecke's inequality.
\begin{corollary}[Pl{\"u}nnecke-Ruzsa inequalities]
Suppose that $|A+A| \leq K|A|$.  Then 
\begin{equation*}
|nA-mA| \leq K^{n+m}|A|\text{ for all }n,m \in \N.
\end{equation*}
\end{corollary}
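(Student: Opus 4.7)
The plan is to combine Corollary \ref{cor.petcor} with an injection argument modelled on the proof of Ruzsa's triangle inequality. First I would apply Corollary \ref{cor.petcor} to $A$ with $B = A$: since $|A+A| \leq K|A|$, this yields a non-empty $X \subseteq A$ satisfying $|kA + X| \leq K^k|X|$ for every $k \in \N$. In particular, both $|nA + X| \leq K^n|X|$ and $|mA + X| \leq K^m|X|$ hold, and $|X| \leq |A|$ since $X \subseteq A$.

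Next I would construct an injection
\begin{equation*}
\iota \colon X \times (nA - mA) \hookrightarrow (nA + X) \times (mA + X)
\end{equation*}
directly. For each $s \in nA - mA$, fix a representation $s = a(s) - b(s)$ with $a(s) \in nA$ and $b(s) \in mA$, and set $\iota(x,s) := (a(s) + x,\, b(s) + x)$. To see this is injective, observe that from the image $(u,v)$ we recover $s = u - v$, which fixes $a(s)$ and $b(s)$, and then $x = u - a(s)$. The injection yields
\begin{equation*}
|X| \cdot |nA - mA| \;\leq\; |nA + X| \cdot |mA + X| \;\leq\; K^{n}|X| \cdot K^{m}|X| \;=\; K^{n+m}|X|^2,
\end{equation*}
and dividing by $|X|$ and using $|X| \leq |A|$ gives $|nA - mA| \leq K^{n+m}|A|$, as required.

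The only real subtlety lies in choosing an injection whose two target factors are both controlled by Corollary \ref{cor.petcor}. Ruzsa's triangle inequality as stated in the paper, applied with the most natural substitution, would produce a bound of the shape $|nA + X| \cdot |mA - X|$, and the factor $|mA - X|$ is not directly controlled by Corollary \ref{cor.petcor}; handling it separately would cost extra factors of $K$ and spoil the sharp exponent. Flipping the signs in the Ruzsa-style injection so that both output coordinates are sums with $X$ removes this obstruction and delivers the exponent $n+m$ in one shot.
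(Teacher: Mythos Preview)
Your proof is correct and follows essentially the same approach as the paper: apply Corollary~\ref{cor.petcor} with $B=A$, then finish with a Ruzsa-type injection. The paper simply cites Ruzsa's triangle inequality for the second step (implicitly in the variant form $|X|\,|U-V|\le |U+X|\,|V+X|$), whereas you unpack exactly this injection by hand --- a sensible choice, since the triangle inequality as literally stated in the paper does need the small sign adjustment you identify.
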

\begin{proof}
Apply Corollary \ref{cor.petcor} to get a set $X\subset A$ such that $|rA +X| \leq K^r|X|$ for all $r \in \N$ so that in particular $|nA +X| \leq K^n|X|$ and $|-mA-X| \leq K^m|X|$.  It follows from Ruzsa's triangle inequality that $|nA-mA| \leq K^{n+m}|X| \leq  K^{n+m}|A|$ as required. 
\end{proof}

\section{Ruzsa's covering lemma}\label{sec.covering}

Pl{\"u}nnecke's inequality showed us how small doubling leads to small higher order sums.  In \cite{ruz::01} Ruzsa introduced another argument called a covering argument to the area which yields quantitatively similar order results to Pl{\"u}nnecke's inequality but has the advantage of also providing a little structure.  This covering argument is the topic of this section and will already give us a version of Fre{\u\i}man's theorem in groups of bounded exponent.  We start with the basic lemma:
\begin{lemma}[Ruzsa's covering lemma, {\cite[Lemma 2.14]{taovu::}}]\label{lem.rcl}
Suppose that $|A + S| \leq K|S|$. Then there is a set $T \subset A$ with $|T| \leq K$ such that $A \subset T+ S-S$.
\end{lemma}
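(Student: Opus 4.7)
The plan is to use a greedy/maximality argument to select $T$ as a maximal subset of $A$ whose translates of $S$ are pairwise disjoint, and then extract both the size bound on $T$ and the covering property from this maximality.

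Concretely, I would let $T \subset A$ be a subset, maximal with respect to inclusion, such that the translates $\{t + S : t \in T\}$ are pairwise disjoint. (Since $A$ is finite, such a $T$ exists; if $A$ is infinite one picks a maximal disjoint family by Zorn's lemma or simply iterates.) The size bound is immediate: the disjointness yields
\begin{equation*}
|T||S| = \Bigl|\bigsqcup_{t \in T}(t+S)\Bigr| = |T+S| \leq |A+S| \leq K|S|,
\end{equation*}
and dividing by $|S|$ gives $|T| \leq K$.

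For the covering property, I would fix any $a \in A$ and argue from maximality. If $a \in T$ there is nothing to show (since $0 \in S-S$, assuming $S$ nonempty — if $S$ is empty the hypothesis is vacuous and the claim is trivial). Otherwise $T \cup \{a\}$ strictly contains $T$ and still lies in $A$, so by maximality the translates $\{(t+S) : t \in T \cup \{a\}\}$ fail to be pairwise disjoint. Since $T$ itself was disjoint, the failure must involve $a$: there exists $t \in T$ with $(a+S) \cap (t + S) \neq \emptyset$, i.e.\ $a + s = t + s'$ for some $s,s' \in S$. Rearranging, $a = t + (s' - s) \in T + S - S$, which establishes $A \subset T + S - S$.

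The only subtle point is the maximality step, but there is no genuine obstacle here — the whole argument hinges on the observation that adding any $a \notin T$ must destroy disjointness, and disjointness can only be destroyed through a collision between $a+S$ and some existing $t+S$, which is exactly what produces the expression $a = t + s' - s$.
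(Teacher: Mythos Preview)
Your proof is correct and follows essentially the same approach as the paper: choose $T \subset A$ maximal subject to the translates $t+S$ being pairwise disjoint, derive $|T| \leq K$ from $|T||S| = |T+S| \leq |A+S|$, and obtain the covering $A \subset T+S-S$ from maximality via the collision $(a+S)\cap(t+S)\neq\emptyset$. The paper phrases this as ``$T$ is a maximal $S$-separated subset of $A$'', but the argument is identical.
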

\begin{proof}
The technique here is very powerful so it is worth developing in some detail: we let $T \subset A$ be maximal $S$-separated.  (The set $T$ is \emph{$S$-separated} if every pair of distinct elements $t,t' \in T$ have $t+S$ and $t'+S$ disjoint.)  It follows that $|T+S| = |T||S|$.  On the other hand, since $T \subset A$, we have $T+S \subset A+S$ and so
\begin{equation*}
|T||S| = |T+S| \leq |A+S| \leq K|S|;
\end{equation*}
we conclude that $|T| \leq K$.

Now we use the fact that $T$ is maximal: if $a \in A$ then (either trivially if $a \in T$ or) by maximality there is some $t \in T$ such that $(t+S) \cap (a+S) \neq \emptyset$.  It follows that $a \in t+S-S \subset T+S-S$ and the result is proved.
\end{proof}
It should be remarked that this has an extension developed by Tao in \cite{tao::6} giving a non-Abelian version of a (slightly weak) Pl{\"u}nnecke inequality, although now Petridis' approach to Pl{\"u}nnecke's inequality also yields a non-Abelian version of the (almost) full strength Pl{\"u}nnecke inequality.

Lemma \ref{lem.rcl} (or rather the technique used to prove it) can also be used to show that $d$-dimensional centred convex progressions have doubling $\exp(O(d))$.
\begin{lemma}\label{lem.ccppoly}
Suppose that $M$ is a $d$-dimensional centred convex coset progression.  Then $|M+M| \leq\exp(O(d))|M|$.
\end{lemma}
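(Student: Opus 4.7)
The plan is to reduce the desired inequality to covering $\Z^d \cap 2Q$ by boundedly many \emph{integer} translates of $\Z^d \cap Q$, and then push this covering through $\phi$ and $H$. Writing $M = P + H$ with $P = \phi(\Z^d \cap Q)$, the facts that $H + H = H$, that $\phi$ is a homomorphism, and that $Q$ is convex together give $M + M \subset \phi(\Z^d \cap 2Q) + H$. So if one can produce $z_1, \dots, z_N \in \Z^d$ with $\Z^d \cap 2Q \subset \bigcup_{i=1}^N (z_i + (\Z^d \cap Q))$, then applying $\phi$ and adding $H$ gives $\phi(\Z^d \cap 2Q) + H \subset \bigcup_i (\phi(z_i) + M)$, whence $|M+M| \leq N|M|$.

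The integer covering I would build by mimicking the proof of Ruzsa's covering lemma (Lemma \ref{lem.rcl}) in the continuous setting. Choose a maximal family $\{x_i\}_{i=1}^N \subset 2Q$ such that the sets $x_i + Q/4$ are pairwise disjoint; maximality combined with $-Q = Q$ forces $2Q \subset \bigcup_i (x_i + Q/2)$, while a volume comparison in $\R^d$ (using $2Q + Q/4 = (9/4)Q$ for symmetric convex $Q$) bounds $N$ by $9^d = \exp(O(d))$. For each $i$ with $(x_i + Q/2) \cap \Z^d \neq \emptyset$ pick $z_i$ in that intersection; any other $y \in (x_i + Q/2) \cap \Z^d$ then satisfies $y - z_i \in Q/2 - Q/2 = Q$ by symmetry, and $y - z_i \in \Z^d$, so $y \in z_i + (\Z^d \cap Q)$. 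Discarding the indices with empty intersection yields the required covering by at most $\exp(O(d))$ integer translates.

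The main obstacle is precisely this last step: a naive volume-packing argument delivers a cover only by translates in $\R^d$, whereas to apply $\phi$ one needs integer translates. The symmetry of $Q$ is what bridges the gap, allowing each cover-ball to be re-centred at a lattice point at the cost of passing from $Q/2$ to $Q$, which is harmless since $Q$ itself defines $P$. Once the integer covering is in place, the reduction in the first paragraph closes the argument.
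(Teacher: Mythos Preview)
Your proposal is correct and is essentially the same argument as the paper's: both apply the Ruzsa covering idea in $\R^d$ with $Q/4$-separated points to cover $2Q$ by at most $9^d$ translates of $Q/2$, then re-centre each translate at a lattice point using $Q/2 - Q/2 = Q$ to obtain an integer covering of $\Z^d \cap 2Q$, and finally push through $\phi$ and add $H$. The only cosmetic difference is that the paper picks its representatives after applying $\phi$ (as elements of $G$) rather than before (as elements of $\Z^d$), which makes no material difference.
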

\begin{proof}
To start with we write $M=P+H$ for some centred convex progression $P$ and $Q \subset \R^d$ for the convex body generating $P$.  Given $\lambda \in \R_{>0}$ we write $\lambda Q$ for the set $Q$ dilated by a factor $\lambda$ so that $\mu(\lambda Q) = \lambda^d\mu(Q)$.

Now, let $X \subset 2Q$ be a maximal $\frac{1}{4}Q$-separated set so that by the same argument as in Ruzsa's covering lemma we have
\begin{equation*}
2Q \subset X + \frac{1}{4}Q - \frac{1}{4}Q = X+\frac{1}{2}Q \text{ and } |X| 4^{-d}\mu(Q) \leq (9/4)^d\mu(Q).
\end{equation*}
From the second of these it follows that $|X| \leq 9^d$.  With the first we note that
\begin{equation*}
P+P=\phi(Q \cap \Z^d)+\phi(Q \cap \Z^d) \subset \phi(2Q \cap \Z^d) \subset \bigcup_{x \in X}{\phi((x+\frac{1}{2}Q)\cap \Z^d)}.
\end{equation*}
Let $T$ be a set such that if $\phi((x+\frac{1}{2}Q )\cap \Z^d)\neq \emptyset$ for some $x \in X$, then $T$ contains exactly one element of this set, so that $|T| \leq |X|$.  Then if $t' \in \phi((x+\frac{1}{2}Q )\cap \Z^d)$ we have some $t \in T$ such that $t-t' \in \phi(Q \cap \Z^d)=P$, whence $P+P \subset T+P$.  Adding $H$ we get that $(P+H) + (P+H) \subset T+ (P+H)$ since $H+H=H$, and the result follows given the bound on $|X|$ (and hence $|T|$).
\end{proof}

One informative illustration of why Ruzsa's covering lemma is so powerful is given in Ruzsa's original paper \cite{ruz::01}.
\begin{proposition}[Fre{\u\i}man-Ruzsa theorem for groups of bounded exponent]\label{prop.ruzanal}
Suppose that $G$ has exponent $r$ and $|A+A| \leq K|A|$.  Then $\langle A \rangle$, the group generated by $A$, has size at most $K^2r^{K^4}|A|$.
\end{proposition}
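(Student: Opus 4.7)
The plan is to find a set $T$ of size at most $K^4$ such that $\langle A\rangle\subset\langle T\rangle+(A-A)$.  Granted this, the bound follows at once: $|A-A|\leq K^2|A|$ by the Pl{\"u}nnecke-Ruzsa inequalities, and $|\langle T\rangle|\leq r^{K^4}$ because any subgroup of an abelian group of exponent $r$ generated by at most $K^4$ elements has size at most $r^{K^4}$, so
\begin{equation*}
|\langle A\rangle|\leq|\langle T\rangle|\cdot|A-A|\leq K^2 r^{K^4}|A|.
\end{equation*}

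The key move is to apply Ruzsa's covering lemma (Lemma \ref{lem.rcl}) with the set $2A-A$ in the role of ``$A$'' and the (non-symmetric!) set $S=A$ in the role of ``$S$''.  The Pl{\"u}nnecke-Ruzsa estimate supplies
\begin{equation*}
|(2A-A)+A|=|3A-A|\leq K^4|A|=K^4|S|,
\end{equation*}
so the covering lemma yields $T\subset 2A-A$ with $|T|\leq K^4$ and
\begin{equation*}
2A-A\subset T+S-S=T+(A-A).
\end{equation*}
The choice of a non-symmetric $S$ is crucial, since any symmetric choice (e.g.\ $S=A-A$) would produce $2A-2A$ on the right, and the subsequent size bounds would be too weak.

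A short induction on $k$ then shows $kA\subset kT+(A-A)$ for every $k\geq 1$.  The base case $k=1$ uses the identity $a=a+a'-a'\in 2A-A$ (for any fixed $a'\in A$), which gives $A\subset 2A-A\subset T+(A-A)$.  For the inductive step,
\begin{equation*}
(k+1)A=kA+A\subset kT+(A-A)+A=kT+(2A-A)\subset(k+1)T+(A-A).
\end{equation*}
Now using the exponent hypothesis, $-a=(r-1)a$, so every element of $\langle A\rangle$ can be written as a positive $\Z$-combination of at most $(r-1)|A|$ elements of $A$; hence $\langle A\rangle=\{0\}\cup\bigcup_{k\geq 1}kA$.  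Since $0\in A-A\subset\langle T\rangle+(A-A)$ and $kT\subset\langle T\rangle$ for each $k$, we conclude $\langle A\rangle\subset\langle T\rangle+(A-A)$, which completes the plan.

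There is no genuine obstacle once the covering step is spotted; what requires care is simultaneously keeping the ``residual'' set equal to $A-A$ (rather than $2A-2A$) \emph{and} keeping $|T|$ at $K^4$.  This is exactly what the choice of non-symmetric $S=A$ together with the Pl{\"u}nnecke-Ruzsa bound on $|3A-A|$ accomplishes.
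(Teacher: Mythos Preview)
Your proof is correct and follows essentially the same route as the paper's: apply Ruzsa's covering lemma to $2A-A$ with $S=A$ (using the Pl{\"u}nnecke--Ruzsa bound $|3A-A|\le K^4|A|$) to get $|T|\le K^4$ and $2A-A\subset T+(A-A)$, then iterate to trap $\langle A\rangle$ inside $\langle T\rangle+(A-A)$. The only cosmetic difference is that the paper writes the induction as $nA+(A-A)\subset nT+(A-A)$ and handles negative sums via the symmetry of $\langle T\rangle+(A-A)$, whereas you use the exponent hypothesis to reduce to positive sums; both work equally well.
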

\begin{proof}
The idea is simply to apply Ruzsa's covering lemma to $2A-A$.  By the Pl{\"u}nnecke-Ruzsa inequalities we have that $|(2A-A) + A | =|3A-A| \leq K^4|A|$ and so there is a set $T$ of size at most $K^4$ such that
\begin{equation*}
A+(A-A) = 2A-A \subset T+A-A.
\end{equation*}
By induction it follows that $nA+ (A-A) \subset nT + (A-A)$ for all $n \in \N$.  We write $H$ for the group generated by $T$ and note that $|H| \leq r^{|T|}$ and $nT + A-A \subset H+A-A$.  We conclude that $nA\subset nA +(A-A) \subset H+A-A$ for all $n$ and similarly for $-nA$ since $H$ and $A-A$ are symmetric.  It follows that $\langle A \rangle \subset H+A-A$ and we get the result since $|A-A| \leq K^2|A|$.
\end{proof}
Ruzsa has a further argument published in \cite{deshenpla::} which improves the $K^4$ above to a $K^3$ using a slight refinement of the Pl{\"u}nnecke-Ruzsa inequalities.  A refined covering argument of Green and Ruzsa \cite{greruz::} gives the best known result following from covering techniques, while the best known upper bound by any argument is due to Schoen \cite{sch::1} who showed that the group generated by $A$ has size at most $r^{K^{1+o(1)}}|A|$.

It may also be worth noting that by letting $A$ be $2K+1$ independent elements the upper bound is at least $r^{2K+1}|A|$ so that Schoen's result is tight up to the $o(1)$-term.  (In the case when $r=2$ this $o(1)$-term has been eliminated via some arguments from extremal set theory introduced by Green and Tao.  We shall not pursue this here but see \cite{gretao::2,kon::0} and \cite{zoh::} for details.)

\section{Relative polynomial growth}\label{sec.rpg}

In \S\ref{sec.overview} we made it clear that relative polynomial growth was going to be a key concept for us, and it arises naturally when we compare the results of \S\ref{sec.plun} with those of \S\ref{sec.covering} as we shall now see.

Suppose that $|A+A| \leq K|A|$.  By Pl{\"u}nnecke's inequality we have that
\begin{equation*}
|nA| \leq K^n|A| \text{ for all }n \in \N.
\end{equation*}
On the other hand, by an inductive application of Ruzsa's covering lemma (as in the proof of Proposition \ref{prop.ruzanal}) we have that $(n-1)A + (A-A) \subset (n-1)T + A-A$ for all $n \in \N$ and some $T$ of size at most $K^4$.  Now since $G$ is Abelian we have
\begin{equation*}
|(n-1)T| \leq \binom{|T|+n-2}{|T|-1}\leq n^{|T|},
\end{equation*}
and so
\begin{equation*}
|nA| \leq |(n-1)T|.|A-A| \leq K^2|(n-1)T||A| \leq n^{O(K^4)}|A|
\end{equation*}
for all $n\in \N$.  For small values of $n$ this is much weaker than Pl{\"u}nnecke's inequality but for large values of $n$, the estimate from Pl{\"u}nnecke is exponential while this is polynomial.

Proposition \ref{prop.ruzanal} does not adapt directly to the case of general Abelian groups because when $G$ does not have bounded exponent we cannot expect the group generated by $A$ to be finite (consider, for example, $G=\Z$), but as we saw above it is sufficient to give relative polynomial growth.
\begin{proposition}\label{prop.usefulfirst}
Suppose that $A \subset G$ has $|A+A| \leq K|A|$.  Then $A$ has relative polynomial growth of order $O(K^4)$.
\end{proposition}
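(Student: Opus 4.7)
The plan is to realise in detail the sketch given in the paragraphs preceding the proposition, whose core is a single application of Ruzsa's covering lemma followed by an iteration. To begin, I will apply Lemma \ref{lem.rcl} with $A$ in the role of $S$ and $2A - A$ in the role of the covered set. Pl{\"u}nnecke's inequality (Theorem \ref{thm.plun}) gives $|(2A-A) + A| = |3A-A| \leq K^4|A|$, so the hypothesis of the lemma holds with constant $K^4$ and yields some $T$ with $|T| \leq K^4$ such that
\[
A + (A - A) \;=\; 2A - A \;\subset\; T + (A - A).
\]

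Next, I will iterate this inclusion: substituting the right-hand side back in for the leftmost copy of $A$ and inducting on $n$ gives
\[
(n-1)A + (A-A) \;\subset\; (n-1)T + (A-A) \quad \text{for all } n \in \N.
\]
Because $G$ is Abelian, $|(n-1)T|$ is bounded by the number of multisets of size $n-1$ drawn from $T$, giving $|(n-1)T| \leq \binom{|T|+n-2}{|T|-1} \leq n^{|T|} \leq n^{K^4}$.

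Finally, using $nA \subset (n-1)A + (A-A)$ (which holds since $0 \in A-A$) together with the Pl{\"u}nnecke-Ruzsa bound $|A-A| \leq K^2|A|$, I will get
\[
|nA| \;\leq\; |(n-1)T| \cdot |A-A| \;\leq\; K^2 n^{K^4} |A| \;=\; n^{O(K^4)}|A|,
\]
after absorbing the $K^2$ into the exponent via $K^2 \leq n^{O(\log K)}$ for $n \geq 2$ (the case $n=1$ being trivial). If any step here deserves to be called the obstacle it is the initial choice to cover $2A - A$ rather than $2A$: this way the residual set $A - A$ in the covering is symmetric and is absorbed by the extra copy of $A$ introduced at each induction step. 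After that, the proof is a routine combination of Pl{\"u}nnecke-Ruzsa with a multinomial count of sumsets in an Abelian group.
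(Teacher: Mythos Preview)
Your argument is correct and follows exactly the route sketched in the paper preceding the proposition: Ruzsa covering of $2A-A$ by $T+(A-A)$, iteration to $(n-1)T+(A-A)$, and the multinomial bound on $|(n-1)T|$. One small citation slip: the bound $|3A-A|\leq K^4|A|$ needs the Pl{\"u}nnecke--Ruzsa inequalities (the corollary handling mixed sums and differences), not Theorem~\ref{thm.plun} as stated, which only controls $|nA|$.
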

We have an immediate corollary of this in the following.
\begin{corollary*}[Corollary \ref{cor.ff}]
Suppose that $|A+A| \leq K|A|$.  Then $A$ is $1$-covered by a symmetric neighbourhood of the identity of size at most $\exp(O(\log K ))|A|$ and relative polynomial growth of order $O(K^{4})$.
\end{corollary*}
\begin{proof}
Since $A$ has relative polynomial growth of order $O(K^4)$ by Proposition \ref{prop.usefulfirst} we see by Ruzsa's triangle inequality that $|n(A-A)| \leq |nA - A||-A-nA|/|A| = n^{O(K^4)}|A|$ and so $A-A$ also has relative polynomial growth of order $O(K^4)$.  On the other hand $|A-A| \leq K^2|A|$ and $A-A$ is a symmetric neighbourhood of the identity which $1$-covers $A$ and so we are done.
\end{proof}
The weakness of this result is that it is exponentially expensive to apply: if $A$ has relative polynomial growth of order $d$ then $A$ trivially has doubling at most $2^d$.  This means that if $A$ has doubling $K$ and we apply the proposition we get that $A$ has polynomial growth of order $O(K^4)$, but then we conclude that $A$ has doubling at most $\exp(O(K^4))$ -- an exponential loss.  Incidentally, this exponential loss is exactly the reason for the exponential loss in Green and Ruzsa's first version of Fre{\u\i}man's theorem.

To deal with this situation we have a slight refinement of Ruzsa's covering lemma due to Chang \cite{cha::0}.  Chang observed that if a set has a sort of relative sub-exponential growth on one scale then the covering set $T$ in Ruzsa'c overing lemma can be made to be highly structured and hence get relative polynomial growth of much lower order on all scales.  To be clear we need some notation: write
\begin{equation*}
\Span(X):=\{\sigma.X:=\sum_{x \in X}{\sigma_xx}: \sigma \in \{-1,0,1\}^X\}.
\end{equation*}
Then we have the following result.
\begin{lemma}[A variant of Chang's covering lemma]\label{lem.ccl}
Suppose that $|kA + S| < 2^k|S|$ (and $0_G \in A$). Then there is a set $T \subset A$ with $|T| < k$ such that $A \subset \Span(T) + S-S$.
\end{lemma}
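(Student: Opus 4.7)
The plan is to construct $T$ greedily and then show the process must terminate in fewer than $k$ steps because otherwise we would contradict the growth hypothesis $|kA+S|<2^k|S|$.

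More precisely, I would build $T=\{t_1,t_2,\dots\}$ one element at a time: having chosen $T_j:=\{t_1,\dots,t_j\}$, if $A\subset \Span(T_j)+S-S$ we stop; otherwise pick any $t_{j+1}\in A\setminus (\Span(T_j)+S-S)$. Note the greedy criterion means $t_{j+1}-\sigma\cdot T_j\notin S-S$ for every $\sigma\in\{-1,0,1\}^{T_j}$, equivalently $(t_{j+1}+S)\cap(\sigma\cdot T_j+S)=\emptyset$ for every such $\sigma$. Provided the process runs for $k$ steps we will derive a contradiction; once it halts at some $j<k$ the set $T:=T_j$ has the required properties, with $|T|<k$ and $A\subset\Span(T)+S-S$ by construction.

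The heart of the argument is to exhibit $2^k$ pairwise disjoint translates of $S$ sitting inside $kA+S$. For each $\varepsilon\in\{0,1\}^k$ consider the sum $\varepsilon\cdot T:=\sum_{i=1}^k \varepsilon_i t_i$. Since $0_G\in A$, padding with zeros gives $\varepsilon\cdot T\in kA$ regardless of how many coordinates of $\varepsilon$ are nonzero, so each translate $\varepsilon\cdot T+S$ lies in $kA+S$. To see that distinct $\varepsilon,\varepsilon'$ give disjoint translates, let $j$ be the largest coordinate on which they differ, say $\varepsilon_j=1$, $\varepsilon'_j=0$. If $\varepsilon\cdot T+s=\varepsilon'\cdot T+s'$ then, since the coordinates above $j$ cancel, we get $t_j=-\sigma\cdot T_{j-1}+s'-s$ for some $\sigma\in\{-1,0,1\}^{T_{j-1}}$, putting $t_j\in \Span(T_{j-1})+S-S$ and contradicting the way $t_j$ was chosen. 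Hence the $2^k$ translates are genuinely disjoint, giving $|kA+S|\geq 2^k|S|$, which is exactly the hypothesis we assumed fails. So the greedy procedure cannot survive $k$ rounds, and we are done.

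The main obstacle is keeping track of the disjointness argument cleanly; in particular one has to use the greedy criterion at the \emph{largest} differing coordinate so that the remaining $t_i$'s collapse into an element of $\Span(T_{j-1})$, and one has to remember to invoke $0_G\in A$ to place every $\varepsilon\cdot T$ inside $kA$ (not merely in $(\sum\varepsilon_i)A$). Everything else is formal: bookkeeping of the greedy halt condition and an elementary counting inequality.
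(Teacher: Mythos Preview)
Your proof is correct and follows essentially the same approach as the paper. The paper phrases the construction as taking a \emph{maximal $S$-dissociated} subset $T\subset A$ (meaning the translates $\sigma\cdot T+S$, $\sigma\in\{0,1\}^T$, are pairwise disjoint) rather than building $T$ greedily, but this is a cosmetic difference: your greedy criterion $t_{j+1}\notin\Span(T_j)+S-S$ is exactly what is needed to make the resulting set $S$-dissociated, and your largest-coordinate disjointness argument is the inductive unwinding of that definition. Both proofs then reach the same contradiction by exhibiting $2^k$ disjoint translates of $S$ inside $kA+S$.
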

\begin{proof}
Let $T$ be a maximal \emph{$S$-dissociated} subset of $A$, that is a maximal subset of $A$ such that
\begin{equation*}
(\sigma.T + S) \cap (\sigma'.T+S) = \emptyset \text{ for all } \sigma\neq \sigma' \in \{0,1\}^T.
\end{equation*}
Now suppose that $x' \in A \setminus T$ and write $T':=T \cup \{x'\}$. By maximality of $T$ there are elements $\sigma,\sigma' \in \{0,1\}^{T'}$ such that $(\sigma.T' +S) \cap (\sigma'.T' + S) \neq \emptyset$. Now if $\sigma_{x'} = \sigma'_{x'}$ then $(\sigma|_T.T +S) \cap (\sigma'|_T.T + S) \neq \emptyset$ contradicting the fact that $T$ is $S$-dissociated. Hence, without loss of generality, $\sigma_{x'}=1$ and $\sigma'_{x'}=0$, whence
\begin{equation*}
x' \in \sigma'|_T.T -\sigma|_T.T + S - S \subset \Span(T) + S-S.
\end{equation*}
We are done unless $|T|\geq k$; assume it is and let $T' \subset T$ be a set of size $k$. Denote $\{\sigma.T': \sigma \in \{0,1\}^{T'}\}$ by $P$ and note that $P \subset kA$ (since $0_G \in A$), whence
\begin{equation*}
2^k|S| = |P+S| \leq |kA+S| < 2^k|S|.
\end{equation*}
This contradiction completes the proof.
\end{proof}
Dissociativity is a very important concept in harmonic analysis and the relative version introduced in the above proof also have many uses.  The reader interested in learning more is directed to \cite[\S4.5]{taovu::} or the book \cite{rud::1} or Rudin.

This result yields the following useful corollary.
\begin{corollary}\label{cor.useful}
Suppose that $X \subset G$ is a symmetric neighbourhood and $|(3k+1)X| < 2^k|X|$ for some $k \in \N$.  Then $X$ has relative polynomial growth of order $O(k)$.
\end{corollary}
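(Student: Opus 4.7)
The plan is to invoke Chang's covering lemma (Lemma \ref{lem.ccl}) with the choices $A = 3X$ and $S = X$. Its hypothesis becomes $|kA + S| = |(3k+1)X| < 2^k|X| = 2^k|S|$, which is exactly our assumption, and $0_G \in 3X$ because $0_G \in X$. Chang's lemma thus delivers a set $T \subset 3X$ with $|T| < k$ for which $3X \subset \Span(T) + X - X$, and the symmetry $X = -X$ simplifies the right-hand side to yield
$$3X \subset \Span(T) + 2X.$$

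The next step is to promote this to a bound on every $nX$ by induction on $n$: I claim $nX \subset (n-2)\Span(T) + 2X$ for every integer $n \geq 3$. The base case $n = 3$ is exactly the previous display. For the inductive step, assuming the containment for $n$, I write
$$(n+1)X = X + nX \subset (n-2)\Span(T) + 3X \subset (n-2)\Span(T) + \Span(T) + 2X = (n-1)\Span(T) + 2X,$$
where the second inclusion applies the base covering to the $3X$ factor once more.

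It then remains to estimate sizes. By the definition of $\Span$, the iterated sumset $(n-2)\Span(T)$ consists of combinations $\sum_{t \in T} c_t t$ with $|c_t| \leq n-2$, so $|(n-2)\Span(T)| \leq (2n-3)^{|T|} \leq (2n)^{k}$. Combined with the hypothesis, which in particular gives $|2X| \leq |(3k+1)X| < 2^k|X|$, this produces $|nX| \leq (4n)^k|X|$ for all $n \geq 3$, while the cases $n \in \{1,2\}$ are immediate from the hypothesis. Since $(4n)^k \leq n^{3k}$ whenever $n \geq 2$, we conclude that $X$ has relative polynomial growth of order $O(k)$ (concretely at most $3k$). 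The only point that requires real thought is the choice of $A$ and $S$ in Chang's lemma: the naive attempt $A = S = X$ yields only the tautological containment $X \subset \Span(T) + 2X$, and one has to notice that the precise hypothesis $|(3k+1)X| < 2^k|X|$ is tailored to support the more productive choice $A = 3X$, which enlarges the left-hand side of the covering just enough for the telescoping induction to produce a genuine bound.
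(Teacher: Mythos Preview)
Your proof is correct and follows essentially the same route as the paper's: apply Chang's covering lemma with $A=3X$, $S=X$ to obtain $3X \subset \Span(T)+2X$, telescope to $nX \subset (n-2)\Span(T)+2X$, and bound $|(n-2)\Span(T)|$ by $(2n-3)^{|T|}$. Your write-up is slightly more explicit in carrying out the induction and in tracking an actual constant ($3k$), and your closing remark on why the choice $A=3X$ rather than $A=X$ is the crucial one is a nice addition.
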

\begin{proof}
Apply Lemma \ref{lem.ccl} to the sets $3X$ and $X$ to get a set $T$ of size less than $k$ such that $3X \subset \Span(T) +2X$.  It follows that $nX \subset (n-2)\Span(T) + 2X$ and so
\begin{equation*}
|nX| \leq |(n-2)\Span(T)||2X| \leq (2n-3)^k|2X|=O(n)^k|X|
\end{equation*}
provided $n \geq 2$.  We conclude that $X$ has relative polynomial growth $O(k)$ as required.
\end{proof}
This will often be combined with the following useful application of Ruzsa's covering lemma.
\begin{lemma}\label{lem.convert}
Suppose that $X$ is a set of relative polynomial growth of order $d$ and $|A+X| \leq K|X|$.  Then $A$ is $K$-covered by $X-X$, a symmetric neighbourhood of the identity having relative polynomial growth of order $O(d)$.
\end{lemma}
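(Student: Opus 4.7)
The statement has three assertions: (i) $A$ is $K$-covered by $X-X$; (ii) $X-X$ is a symmetric neighbourhood of the identity; and (iii) $X-X$ has relative polynomial growth of order $O(d)$. The first two are essentially immediate and the third is the real content.

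For (i) I would simply invoke Ruzsa's covering lemma (Lemma \ref{lem.rcl}) applied to the pair $(A,X)$: the hypothesis $|A+X|\leq K|X|$ produces a set $T\subset A$ with $|T|\leq K$ and $A\subset T+X-X$, which is exactly the statement that $A$ is $K$-covered by $X-X$. For (ii) we just note that $-(X-X)=X-X$ and, assuming $X$ is non-empty, $0_G=x-x\in X-X$ for any $x\in X$.

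For (iii), the key identity is $n(X-X)=nX-nX$, and the idea is to transfer the polynomial growth from $X$ to $X-X$ by two applications of Ruzsa's triangle inequality in the form $|B|\cdot|A+C|\leq|A-B|\cdot|B-C|$ established in \S\ref{sec.plun}. First, applying it with $A=nX$, $B=-X$, $C=-X$ gives
\begin{equation*}
|X|\cdot|nX-X|\leq|(n+1)X|\cdot|X-X|\leq(n+1)^{d}|X|\cdot|X-X|,
\end{equation*}
so $|nX-X|\leq(n+1)^{d}|X-X|$. Second, applying it with $A=nX$, $B=X$, $C=-nX$ and feeding in the previous bound gives
\begin{equation*}
|X|\cdot|nX-nX|\leq|nX-X|\cdot|(n+1)X|\leq(n+1)^{2d}|X|\cdot|X-X|,
\end{equation*}
hence $|n(X-X)|\leq(n+1)^{2d}|X-X|$. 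Since $n+1\leq n^{2}$ for $n\geq 2$ (and the $n=1$ case is trivial), this absorbs into $|n(X-X)|\leq n^{4d}|X-X|$ for every $n\in\N$, giving relative polynomial growth of order $4d=O(d)$.

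The only potentially awkward step is the bookkeeping in the last paragraph: we must use the growth hypothesis on $X$ itself (which is only polynomial, not exponential, in $n$) rather than the doubling bound $|X+X|\leq 2^{d}|X|$, because naively invoking Plünnecke--Ruzsa on $X-X$ would yield a bound of the form $\exp(O(nd))$ which is useless for our purposes. The double application of Ruzsa's triangle inequality is precisely what lets the polynomial growth pass through $X-X$ without incurring an exponential loss.
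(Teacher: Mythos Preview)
Your proof is correct and follows essentially the same approach as the paper: Ruzsa's covering lemma for part (i), and Ruzsa's triangle inequality combined with the polynomial growth of $X$ for part (iii). The paper's proof is terser and does not spell out the two-step chain of triangle inequalities you give, but your argument is exactly the natural way to unpack that sentence.
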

\begin{proof}
We just apply Ruzsa's covering lemma to get that $A$ is $K$-covered by $X-X$.  This is a symmetric neighbourhood of the identity and $|n(X-X)| \leq n^{O(d)}|X|$ by Ruzsa's triangle inequality and the fact that $X$ has relative polynomial growth of order $d$.  The result follows.
\end{proof}

\section{Bogolyubov-Ruzsa-type lemmas}\label{sec.wb}

In the last section we proved Proposition \ref{prop.usefulfirst} which converted our small doubling condition into a relative polynomial growth of low order condition.  As mentioned there this was not a particularly efficient process and so we set about proving Corollary \ref{cor.useful} to do better.  In this section we shall discuss a general framework for using this corollary.

To start with suppose that $X$ (is symmetric and) has $|X+X| \leq K|X|$.  By Pl{\"u}nnecke's inequality we have that
\begin{equation*}
|(3k+1)X| \leq K^{3k+1}|X| \text{ for all }k \geq 1
\end{equation*}
which is \emph{not} smaller than $2^k$ (unless $K$ is very small which is a case we have already discussed in the introduction).  To get a sub-exponential estimate it will be useful to have a result of the following shape.
\begin{proposition}[Weak Bogolyubov-Ruzsa-type lemma]\label{prop.genbog}
Suppose that $X$ is symmetric with $|X+X| \leq K|X|$ and $m \in \N$.  Then there is a symmetric neighbourhood of the identity, $T$, such that
\begin{equation*}
|T| =\Omega_{m,K}(|X|) \text{ and } mT \subset 4X.
\end{equation*}
\end{proposition}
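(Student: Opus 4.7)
The plan is to produce $T$ as a Bohr set inside $4X$, located via Fourier analysis on a finite-group model of $X$. To begin, I would invoke a version of Ruzsa's modelling lemma to embed $X$ Freiman-isomorphically into a finite Abelian group $G'$ of order $N = O_K(|X|)$, preserving enough sums and differences of elements of $X$ that the set $4X$ computed in $G$ corresponds to $4X$ computed in $G'$. After this reduction we may freely perform discrete Fourier analysis on $\widehat{G'}$.

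Next, consider the nonnegative convolution $f := 1_X \ast 1_X \ast 1_X \ast 1_X$, which is supported on $4X$ (using $X = -X$, so that $2X - 2X = 4X$) and whose Fourier transform $\widehat{f}(\gamma) = |\widehat{1_X}(\gamma)|^4$ is pointwise nonnegative. Introduce the large-spectrum set $\Lambda := \{\gamma \in \widehat{G'}: |\widehat{1_X}(\gamma)| \geq \eta |X|\}$. Parseval bounds $|\Lambda| \leq N/(\eta^2 |X|) = O_K(\eta^{-2})$. On the Bohr set $B(\Lambda, \rho) := \{x : |1 - \gamma(x)| \leq \rho \text{ for all } \gamma \in \Lambda\}$, Fourier inversion splits $f(x)$ into a $\Lambda$-part and a $\Lambda^c$-part. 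For $\rho$ small the $\Lambda$-part is bounded below by a constant multiple of the $\gamma = 0$ term $|X|^4/N$, while by Parseval the $\Lambda^c$-part has modulus at most $\eta^2 |X|^3$. Choosing $\eta$ a sufficiently small function of $K$ (of order $\sqrt{|X|/N} = \Omega_K(1)$) forces $f > 0$ on $B(\Lambda, \rho)$, yielding $B(\Lambda, \rho) \subset 4X$.

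Finally, set $T := B(\Lambda, \rho/m)$. Bohr sets are symmetric neighbourhoods of the identity, and a short triangle-inequality argument applied to the characters in $\Lambda$ gives $mB(\Lambda, \rho/m) \subset B(\Lambda, \rho)$, so $mT \subset 4X$. The standard Bohr-set volume lower bound gives $|T| \geq (\rho/(2\pi m))^{|\Lambda|} N = \Omega_{m,K}(|X|)$, since $|\Lambda| = O_K(1)$ and $N \gg_K |X|$, pulled back through the Freiman isomorphism.

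The single technically delicate step is the modelling reduction: it must respect enough iterated sumsets of $X$ that the inclusion $B(\Lambda, \rho) \subset 4X$ proved inside $G'$ lifts back to a genuine inclusion $mT \subset 4X$ inside $G$, and it is where the dependence on $K$ in $N = O_K(|X|)$ is introduced. Everything else is essentially routine Fourier analysis, pigeonhole on characters, and the elementary Bohr-set volume bound; the weakness of the final bound $\Omega_{m,K}(|X|)$ is built in to the generous choice of $\eta$ and to the crude estimate on $|\Lambda|$.
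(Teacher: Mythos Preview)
Your argument is the classical Bogolyubov--Ruzsa route and is essentially correct. In fact the paper does not give a self-contained proof of this proposition at all: it attributes it precisely to this line of reasoning, noting that Bogolyubov (for dense sets) and Ruzsa (via the modelling lemma) actually prove the stronger statement that $2X-2X$ contains a low-dimensional Bohr set, and that $T$ may then be taken to be its $1/m$-dilate. So you have reconstructed exactly the argument the paper points to.

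Where the paper itself goes is different. In the next two sections it proves the quantitatively sharper Proposition~\ref{prop.key} via the Croot--Sisask lemma applied to $f=1_{S+S}$ and paired against $\mu_S$; setting $S=T=X$ there recovers the present proposition with the explicit bound $|T|\geq \exp(-O(m^2\log^2K))|X|$, with no Fourier analysis and no modelling step, directly in an arbitrary Abelian group. Your approach buys extra structure ($T$ is genuinely a Bohr set), while the Croot--Sisask approach buys explicit bounds and bypasses the one step you rightly single out as delicate. On that step, two small caveats worth recording: the Green--Ruzsa modelling lemma in a general Abelian group only hands you a large \emph{subset} $X'\subset X$ that is Fre{\u\i}man $s$-isomorphic into $G'$, and this $X'$ need not inherit the symmetry of $X$ --- so one should run the Fourier argument on $1_{Y}\ast 1_{-Y}\ast 1_{Y}\ast 1_{-Y}$ supported on $2Y-2Y$ and note $2X'-2X'\subset 4X$; and the order $s$ of the isomorphism must be taken of size $O(m)$, not merely $4$, so that an identity of the shape $t_1+\dots+t_m=y_1+y_2-y_3-y_4$ established in $G'$ pulls back faithfully to $G$.
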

Before remarking on the proof or history, we should see how such a result can be used to give a set with relative sub-exponential growth.  Given $X$ (symmetric) with $|X+X| \leq K|X|$ we apply the lemma with some parameter $m$ to get a set $T$ as described.  On the other hand by Pl{\"u}nnecke's inequality with parameter $l$ we have that $|4lX| \leq K^{4l}|X|$ and it follows that
\begin{equation*}
|mlT| \leq K^{4l}|X| = O_{m,K}(K^{4l}|T|)=\exp(O_K(l+O_m(1)))|T|.
\end{equation*}
At this point put $3k+1=ml$ and letting $m \rightarrow \infty$ very slowly with $l$ we get
\begin{equation*}
|(3k+1)T| = \exp(o_K(k))|T|.
\end{equation*}
It follows that for $k$ sufficiently large in terms of $K$ the right hand side can be made to be at most $2^k$ and so Corollary \ref{cor.useful} can be applied to the set $T$.  Whether this turns out to be useful or not depends entirely on the quality of the lower bound in Proposition \ref{prop.genbog} and establishing results of that type with good bounds will be a major part of the remainder of the paper.

Returning to the history, in the case when $X$ is a thick set (meaning $|X|=\Omega(|G|)$) Proposition \ref{prop.genbog} follows from work of Bogolyubov \cite{bog::}.  Ruzsa in \cite{ruz::9} introduced results of this type to Fre{\u\i}man's theorem, and the above Proposition does follow from his work.  The difference here is that both Bogolyubov and Ruzsa prove stronger statements, in particular showing that the set $4A$ contains a low dimensional Bohr set (see \S\ref{sec.rpgccp} for a definition); the set $T$ can then be identified as a $1/m$-dilate of this Bohr set.

The structurally weaker version of the Bogolyubov-Ruzsa lemma which we need here is fortunately rather easier to prove and results in stronger bounds.  Since our objective is one of bounds this works out well.  

\section{The Croot-Sisask lemma}\label{sec.cs}

One of the key recent tools which has made advances in Fre{\u\i}man's theorem possible is called the Croot-Sisask lemma.  This was first proved by Croot and Sisask in \cite{crosis::} and then refined by Croot, {\L}aba and Sisask in \cite{croabasis::}.  The aim of this section is to give a proof of the Croot-Sisask lemma and then immediately give an application to Fre{\u\i}man's theorem.  

Before starting we shall need a little notation.  As we are interested in sumsets it will not come as too much surprise that we should be using the convolution of functions.  First, recall that for $p \in [1,\infty)$ the space $\ell^p(G)$ is the space of functions $f:G \rightarrow \C$ endowed with the norm
\begin{equation*}
\|f\|_{\ell^p(G)}:=\left(\sum_{x \in G}{|f(x)|^p}\right)^{1/p}.
\end{equation*}
For infinity we take the usual convention that
\begin{equation*}
\|f\|_{\ell^\infty (G)}:=\max\{|f(x)|: x \in G\},
\end{equation*}
and apart from $\ell^\infty$ there is one other $\ell^p$ space of particular importance, and that is $\ell^2$.  This is also a Hilbert space with inner product defined by
\begin{equation*}
\langle f,g\rangle = \sum_{x \in G}{f(x)\overline{g(x)}} \text{ for all } f,g \in \ell^2(G).
\end{equation*}
Now, given $f,g \in \ell^1(G)$ we define their \textbf{convolution} to be the function $f \ast g$ determined point-wise by
\begin{equation*}
f \ast g(x):=\sum_{y+z=x}{f(y)g(z)} \text{ for all } x \in G.
\end{equation*}
Given a finite set $A \subset G$ we write $\mu_A$ for the uniform probability mass function supported on $A$.  (If $G$ were locally compact rather than discrete then we should define $\mu_A$ as a measure but we do not need to involve the additional analysis here.)

There are two ways in which convolution is useful.  The first is because it is an average: in particular if $f \in \ell^1(G)$ and $A\subset G$ is finite then $f \ast \mu_A(x)$ is the average value of $f$ on $x-A$.  In general this means that the convolution of two functions is smoother than the constituent functions and hence the convolution is easier to analyse.

Secondly, convolution is useful to us because
\begin{equation*}
A+B:=\supp 1_A \ast 1_B \text{ for all } A,B \subset G,
\end{equation*}
so that we can analyse $A+B$ through the (much easier to understand) function $1_A \ast 1_B$.

In a certain sense convolution comes from integrating the regular representation and it will be useful to have some notation for this: we write
\begin{equation*}
\rho:G \rightarrow \Aut(\ell^2(G));x \mapsto (f \mapsto \rho_x(f):G \rightarrow \C; y \mapsto f(x+y)).
\end{equation*}
To be concrete, with the regular representation in hand we have that
\begin{equation*}
f \ast g(x)=\langle f, \rho_{-x}(\tilde{g})\rangle \text{ for all } x \in G
\end{equation*}
where $\tilde{g}(x)=\overline{g(-x)}$ for all $x \in G$.

With this notation we can describe the idea behind the Croot-Sisask lemma.  Suppose that $S$ is an arithmetic progression and $T$ is a much shorter arithmetic progression with the same common difference so that $|S+T| \approx |S|$.

Now the Croot-Sisask lemma will tell us that for $f \in \ell^p(G)$ the function $f \ast \mu_S$ does not change much when we translate by elements of $T$.  To see this we recall from earlier that $f \ast \mu_S(x)$ is the average value of $f$ on $x-S$.  Then if $t \in T$ we have $x-S+t \approx x-S$ so that the average of $f$ over $x-S+t$ is approximately the same as the average of $f$ over $x-S$.

The full Croot-Sisask lemma is the following much stronger version of this argument replacing arithmetic progressions by any set with small doubling.
\begin{lemma}[Croot-Sisask]\label{lem.cs}  Suppose that $f \in \ell^p(G)$ for some $p \geq 2$, $S,T \subset G$ are such that $|S+T| \leq L|S|$, and $\eta \in (0,1]$ and $p \in [2,\infty)$ are parameters.  Then the set of $x$ such that 
\begin{equation*}
\|\rho_x(f \ast \mu_S) - f \ast \mu_S\|_{\ell^p(G)} \leq \eta \|f\|_{\ell^p(G)}
\end{equation*}
is a symmetric neighbourhood of the identity and has size at least $(2L)^{-O(\eta^{-2}p)}|T|$.
\end{lemma}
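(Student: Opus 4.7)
The plan is to combine a random-sampling approximation of $f \ast \mu_S$ with a pigeonhole that extracts structure from the hypothesis $|S+T| \leq L|S|$. For an integer $k$ to be chosen and a tuple $\mathbf{s} = (s_1,\dots,s_k) \in S^k$, set $F_{\mathbf{s}}(y) := \frac{1}{k}\sum_{i=1}^{k} f(y-s_i)$, so that $\mathbb{E}_{\mathbf{s} \in S^k}[F_{\mathbf{s}}] = f \ast \mu_S$ pointwise. For each fixed $y$, the quantities $X_i(y) := f(y-s_i) - f\ast\mu_S(y)$ are i.i.d.\ mean-zero in the randomness of $\mathbf{s}$, so a Marcinkiewicz--Zygmund-type $L^p$ inequality for such sums (which uses $p \geq 2$, via Jensen applied to $x \mapsto x^{p/2}$), followed by summation in $y$ and Young's inequality $\|f \ast \mu_S\|_{\ell^p(G)} \leq \|f\|_{\ell^p(G)}$, delivers
\begin{equation*}
\mathbb{E}_{\mathbf{s} \in S^k}\|F_{\mathbf{s}} - f\ast\mu_S\|_{\ell^p(G)}^p \leq (Cp/k)^{p/2}\|f\|_{\ell^p(G)}^p
\end{equation*}
for an absolute $C$. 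Taking $k$ to be the least integer exceeding a sufficiently large multiple of $p\eta^{-2}$, Markov's inequality produces a \emph{good set} $\mathcal{G} \subset S^k$ with $|\mathcal{G}| \geq |S|^k/2$ on which $\|F_{\mathbf{s}} - f\ast\mu_S\|_{\ell^p(G)} \leq \tfrac{\eta}{2}\|f\|_{\ell^p(G)}$.

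Next I exploit $|S+T| \leq L|S|$ by pigeonholing the map $\phi \colon T \times \mathcal{G} \to (S+T)^k$, $(t,\mathbf{s}) \mapsto \mathbf{s}+t := (s_1+t,\dots,s_k+t)$. The domain has at least $|T||S|^k/2$ elements and the codomain at most $L^k|S|^k$, so some $\mathbf{u} \in (S+T)^k$ has preimage of size $\geq |T|/(2L^k)$. Since $\mathbf{s}$ is determined by $t$ once $\mathbf{u}$ is fixed, this means
\begin{equation*}
T^\ast := \{t \in T : \mathbf{u} - t \in \mathcal{G}\} \quad \text{satisfies} \quad |T^\ast| \geq |T|/(2L^k),
\end{equation*}
and for each $t \in T^\ast$ I abbreviate $\mathbf{s}(t) := \mathbf{u} - t \in \mathcal{G}$.

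The payoff is a cancellation. From the definition of $\rho$ one checks the identity $\rho_t(F_{\mathbf{s}}) = F_{\mathbf{s}-t}$, where $\mathbf{s}-t$ means subtracting $t$ from each coordinate. For any $t_1, t_2 \in T^\ast$ and $t := t_1 - t_2$, both $\mathbf{s}(t_2)$ and $\mathbf{s}(t_2) - t = \mathbf{s}(t_1)$ lie in $\mathcal{G}$, so $\rho_t F_{\mathbf{s}(t_2)} = F_{\mathbf{s}(t_1)}$, and the middle term vanishes in the telescoping decomposition
\begin{equation*}
\rho_t(f \ast \mu_S) - f\ast\mu_S = \rho_t\bigl(f\ast\mu_S - F_{\mathbf{s}(t_2)}\bigr) + \bigl(\rho_t F_{\mathbf{s}(t_2)} - F_{\mathbf{s}(t_1)}\bigr) + \bigl(F_{\mathbf{s}(t_1)} - f\ast\mu_S\bigr).
\end{equation*}
The two surviving terms have $\ell^p$-norm at most $\tfrac{\eta}{2}\|f\|_{\ell^p(G)}$ by translation invariance of $\ell^p$ and the goodness of $\mathbf{s}(t_1), \mathbf{s}(t_2)$. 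Thus the set $B$ of $x$ witnessing the conclusion contains $T^\ast - T^\ast$, which already gives $|B| \geq |T^\ast| \geq (2L)^{-O(\eta^{-2}p)}|T|$. That $B$ is a symmetric neighbourhood of the identity is automatic: $0_G$ trivially lies in $B$, and if $x \in B$ then applying $\rho_{-x}$ and using its $\ell^p$-isometry gives $-x \in B$.

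The main obstacle is the sampling estimate of the first step: one needs the right $L^p$ concentration bound for an i.i.d.\ mean-zero sum of \emph{functions}, with the bound scaling like $(p/k)^{1/2}$ and, crucially, avoiding any loss proportional to $|G|$ or to the size of the support of $f$. This is exactly what the Marcinkiewicz--Zygmund-type inequality provides, and it is precisely where $p \geq 2$ is used. The pigeonhole and telescoping cancellation are the genuinely elegant part of Croot--Sisask, and, once one has thought to parametrise by the common image $\mathbf{u}$ rather than by any single $\mathbf{s}$, are essentially forced.
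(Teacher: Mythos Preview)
Your proof is correct and follows essentially the same strategy as the paper's: random sampling via Marcinkiewicz--Zygmund to get a large good set $\mathcal{G}\subset S^k$, then a pigeonhole against $(S+T)^k$ using diagonal translates by $T$, followed by a triangle-inequality telescoping. The only minor variation is in the extraction step: you pigeonhole onto a single popular fibre $\mathbf{u}$ to produce $T^\ast$ and then use $T^\ast - T^\ast$, whereas the paper instead runs a Cauchy--Schwarz argument on $\|1_{\mathcal{L}} \ast 1_\Delta\|_{\ell^2(G^k)}^2$ (with $\Delta$ the diagonal copy of $T$) to obtain the required number of good differences directly; the two extractions are interchangeable and give the same bound.
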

The proof proceeds by random sampling: the idea is that since $f \ast \mu_S$ is point-wise the average value of $f$ on translates of $S$, this can be well approximated by the average value of $f$ on a small set of `typical' elements of $S$.  We are then done if we let $X$ be the set of elements of $G$ such that translating these typical elements does not vary them very much.  To make the notion of being well approximated precise we shall need an inequality called the Marcinkiewicz-Zygmund inequality, and for this we require a little more notation.  

Given $p \in [1,\infty)$ and $(X,\mu)$ a measure space we write $L^p(\mu)$ for the space (of equivalence classes of) measurable functions on $X$ endowed with the norm
\begin{equation*}
\|f\|_{L^p(\mu)}:=\left(\int{|f(x)|^pd\mu(x)}\right)^{1/p}.
\end{equation*}
\begin{theorem}[Marcinkiewicz-Zygmund inequality]\label{thm.mzi}  Suppose that $p\in [2,\infty)$ and we are given independent random variables $X_1,\dots,X_n \in L^p(\P)$ with $\E{\sum_i{X_i}}=0$.  Then
\begin{equation*}
\|\sum_{i}{X_i}\|_{L^p(\P)} = O\left(\sqrt{p}\|\sum_{i}{|X_i|^2}\|_{L^{p/2}(\P)}^{1/2}\right).
\end{equation*}
\end{theorem}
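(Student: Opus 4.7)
My plan is to prove the inequality in the classical way: first symmetrize to reduce to Rademacher averages, and then invoke the scalar Khintchine inequality conditionally. The entire $\sqrt{p}$ factor enters at the Khintchine step.

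First I would introduce an independent copy $X_1',\dots,X_n'$ of the given sequence on a product probability space and set $Y_i:=X_i-X_i'$. Writing $\E'$ for partial expectation over the primed variables, the hypothesis $\E\sum_i X_i=0$ means $\sum_i X_i = \E'[\sum_i Y_i]$ (since $\E'[\sum_i X_i']=\E\sum_i X_i=0$), so Jensen's inequality applied to $|\cdot|^p$ followed by a full expectation yields
$$\|\sum_i X_i\|_{L^p(\P)}^{p} \leq \E\bigl|\sum_i Y_i\bigr|^p.$$
The gain from this step is that each $Y_i$ is \emph{symmetric}, and hence $(Y_1,\dots,Y_n)$ has the same joint law as $(\epsilon_1 Y_1,\dots,\epsilon_n Y_n)$ for independent Rademacher signs $\epsilon_i$ independent of the $Y_i$.

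Next I would condition on the $Y_i$ and apply the scalar Khintchine inequality
$$\|\sum_i \epsilon_i a_i\|_{L^p(\P)} \leq C\sqrt{p}\,\left(\sum_i |a_i|^2\right)^{1/2}$$
(valid over $\R$ or $\C$), which can be proved in a self-contained way from the Hoeffding bound $\E e^{t\sum_i\epsilon_i a_i}\leq e^{t^2\sum_i a_i^2/2}$, the layer-cake formula $\E|Z|^p=p\int_0^\infty s^{p-1}\P(|Z|\geq s)\,ds$, and Stirling's asymptotics $\Gamma(p/2)^{1/p}=O(\sqrt{p})$. Taking the expectation over the $Y_i$ after conditioning produces
$$\|\sum_i Y_i\|_{L^p(\P)} \leq C\sqrt{p}\,\|\sum_i |Y_i|^2\|_{L^{p/2}(\P)}^{1/2}.$$

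Finally I would undo the symmetrization on the square function: pointwise $|Y_i|^2 \leq 2|X_i|^2 + 2|X_i'|^2$, and since $p/2 \geq 1$ the triangle inequality in $L^{p/2}$ together with the fact that $(X_i')_i$ has the same distribution as $(X_i)_i$ gives $\|\sum_i |Y_i|^2\|_{L^{p/2}(\P)} \leq 4\|\sum_i |X_i|^2\|_{L^{p/2}(\P)}$. Chaining the three displayed inequalities delivers the theorem up to a harmless absolute constant. The main obstacle is really the Khintchine inequality with the correct $\sqrt{p}$-growth; everything else (symmetrization, triangle inequality, and the elementary $|a-b|^2\leq 2|a|^2+2|b|^2$) is bookkeeping, so if there is any subtlety it lies in the Rademacher moment estimate.
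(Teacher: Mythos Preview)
Your proof is correct and follows essentially the same route as the paper's sketch: symmetrise via an independent copy, then apply Khintchine's inequality conditionally to the symmetrised variables, and finally bound the square function of $X_i-X_i'$ by that of $X_i$. The only cosmetic differences are that the paper reduces to real scalars first and phrases the conditioning as ``restricting to atoms on which the symmetrised variables take two values'' rather than introducing explicit Rademacher signs, and proves Khintchine by expanding even moments rather than via the Hoeffding/layer-cake route you outline; these are equivalent standard implementations of the same argument.
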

Intuitively one might like to think of the $X_i$s are independent variance one, mean zero random variables.  Then the central limit theorem suggests that $\sqrt{n}^{-1}\sum_i{X_i} \sim N(0,1)$ and the $p$th moments of the normal distribution are well-known (and in any case easily computed); we have
\begin{equation*}
\|\sum_{i}{X_i}\|_{L^p(\P)}^p = n^{p/2}\cdot \frac{2^{p/2}\Gamma((p+1)/2)}{\sqrt{\pi}} = O\left(\sqrt{p}\|\sum_{i}{|X_i|^2}\|_{L^{p/2}(\P)}^{1/2}\right).
\end{equation*}
Thus the Marcinkiewicz-Zygmund inequality can be thought of as saying that nothing much worse than this can happen.

There is a special case of the Marcinkiewicz-Zygmund inequality called Khintchine's inequality which can be used in the proof of the former.
\begin{theorem}[Khintchine's inequality] Suppose that $p\in [2,\infty)$ and we are given independent random variables $X_1,\dots,X_n \in L^p(\P)$ with $\P(X_i=a_i)=\P(X_i=-a_i)=1/2$.  Then
\begin{equation*}
\|\sum_{i}{X_i}\|_{L^p(\P)} = O\left(\sqrt{p}\|\sum_{i}{|X_i|^2}\|_{L^{p/2}(\P)}^{1/2}\right)= O\left(\sqrt{p}\left(\sum_{i}{|a_i|^2}\right)^{1/2}\right).
\end{equation*}
\end{theorem}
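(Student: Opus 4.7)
The plan is to establish the first estimate by showing that $S := \sum_i X_i$ is sub-Gaussian with variance proxy $\sigma^2 := \sum_i |a_i|^2$, and then integrating the resulting tail bound. The second equality is essentially cosmetic: since the $X_i$ take values $\pm a_i$, each $|X_i|^2 = |a_i|^2$ is a deterministic constant, so $\sum_i |X_i|^2$ is a constant random variable and $\|\sum_i |X_i|^2\|_{L^{p/2}(\P)}^{1/2} = (\sum_i |a_i|^2)^{1/2}$ on the nose. If the $a_i$ are allowed to be complex then $X_i = \epsilon_i a_i$ with $\epsilon_i$ Rademacher; splitting into real and imaginary parts and applying the real case to each, at the cost of a factor of $\sqrt{2}$, reduces us to the case $a_i \in \R$.

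For the sub-Gaussian bound I would compute directly from independence:
\begin{equation*}
\E e^{tS} = \prod_i \E e^{t\epsilon_i a_i} = \prod_i \cosh(t a_i) \leq \prod_i e^{t^2 a_i^2/2} = e^{t^2 \sigma^2/2},
\end{equation*}
where the inequality $\cosh(x) \leq e^{x^2/2}$ comes from matching Taylor expansions term by term, using $(2k)! \geq 2^k k!$. The same bound holds for $-S$, so Markov's inequality applied to $e^{tS}$ and $e^{-tS}$ and optimisation in $t > 0$ yields the standard Chernoff-type estimate
\begin{equation*}
\P(|S| \geq \lambda) \leq 2 e^{-\lambda^2/(2\sigma^2)} \text{ for all } \lambda \geq 0.
\end{equation*}

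To convert this into an $L^p$ bound I would integrate the tail:
\begin{equation*}
\E |S|^p = p\int_0^\infty \lambda^{p-1} \P(|S|\geq \lambda)\, d\lambda \leq 2p \int_0^\infty \lambda^{p-1} e^{-\lambda^2/(2\sigma^2)}\, d\lambda = p\, 2^{p/2}\sigma^p \Gamma(p/2),
\end{equation*}
after the substitution $u = \lambda^2/(2\sigma^2)$. Taking $p$th roots and using Stirling in the form $\Gamma(p/2)^{1/p} = O(\sqrt{p})$ gives $\|S\|_{L^p(\P)} = O(\sqrt{p}\,\sigma)$, which is the required bound.

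There is no genuine obstacle here: this is classical and the argument is entirely mechanical once one has the MGF inequality. The only step requiring a little thought is the pointwise inequality $\cosh(x) \leq e^{x^2/2}$, which is what converts the sum of independent bounded symmetric summands into a sub-Gaussian random variable; once this is in hand the tail bound and the moment estimate are routine.
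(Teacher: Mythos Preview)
Your proof is correct and complete: the sub-Gaussian argument via the moment generating function bound $\cosh(x)\le e^{x^2/2}$, Chernoff, and tail integration is a standard and clean route to Khintchine's inequality with the correct $O(\sqrt{p})$ constant.

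The paper, however, sketches a different approach: it restricts to even integers $p=2k$, expands $\bigl(\sum_i X_i\bigr)^{2k}$ directly, and uses that the only nonvanishing terms after taking expectations are those in which every index appears an even number of times; counting these and comparing with $\bigl(\sum_i a_i^2\bigr)^k$ gives the bound. General $p$ then follows by nesting $L^p$ between adjacent even-integer spaces. Your method is arguably slicker --- indeed the paper notes that ``there are more elegant proofs'' --- and gives the constant directly for all $p\ge 2$ without an interpolation step. The combinatorial method, on the other hand, is more elementary in that it avoids any analytic input beyond the multinomial expansion, and it generalises more transparently to other moment computations. Either way, both give the $O(\sqrt{p})$ dependence needed downstream for Marcinkiewicz--Zygmund and the Croot--Sisask lemma.
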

Khintchine's inequality is proved by restricting to the case when $p$ is an even integer (the other cases follow by nesting of norms) and then raising the left hand side to the power $p$, multiplying it out and collecting together terms.  There are more elegant proofs but this gives the main idea.

Given this, to prove the Marcinkiewicz-Zygmund inequality one can proceed by a process of symmetrisation.  First, if the variables are complex then the result follows from taking real and imaginary parts and so one may as well assume they are real.  We then take copies $Y_1,\dots,Y_n$ of $X_1,\dots,X_n$ such that $X_i \sim Y_i$ and $X_1,\dots,X_n,Y_1,\dots,Y_n$ are mutually independent.  Following this we apply Khintchine's inequality to the variables $X_i-Y_i$ restricted to atoms of the sample space on which they are symmetric and only take two values.  Collecting all this together gives the result.

\begin{proof}[Proof of Lemma \ref{lem.cs}]
Let $z_1,\dots,z_k$ be independent uniformly distributed $S$-valued random variables, and for each $y \in G$ define $Z_i(y):=\rho_{-z_i}(f)(y) - f \ast \mu_S(y)$.  For fixed $y$, the variables $Z_i(y)$ are independent and have mean zero, so it follows by the Marcinkiewicz-Zygmund inequality and H{\"o}lder's inequality that
\begin{eqnarray*}
\| \sum_{i=1}^k{Z_i(y)}\|_{L^p(\mu_S^k)}^p &\leq &O(p)^{p/2}\int{\left(\sum_{i=1}^k{|Z_i(y)|^2}\right)^{p/2}d\mu_S^k}\\ & \leq & O(p)^{p/2}k^{p/2-1}\sum_{i=1}^k{\int{|Z_i(y)|^p}d\mu_S^k}.
\end{eqnarray*}
Summing over $y$ and interchanging the order of summation we get
\begin{equation}\label{eqn.khin}
\sum_{y \in G}{\| \sum_{i=1}^k{Z_i(y)}\|_{L^p(\mu_S^k)}^p} \leq O(p)^{p/2}k^{p/2-1}\int{\sum_{i=1}^k{\sum_{y \in G}{|Z_i(y)|^p}}d\mu_{S}^k}.
\end{equation}
On the other hand,
\begin{equation*}
\left(\sum_{y\in G}{|Z_i(y)|^p}\right)^{1/p} =\|Z_i\|_{\ell^p(G)} \leq \|\rho_{-z_i}(f)\|_{\ell^p(G)} + \|f \ast \mu_S\|_{\ell^p(G)} \leq 2\|f\|_{\ell^p(G)}
\end{equation*}
by the triangle inequality.  Dividing (\ref{eqn.khin}) by $k^p$ and inserting the above and the expression for the $Z_i$s we get that
\begin{equation*}
\int{\sum_{y \in G}{\left|\frac{1}{k}\sum_{i=1}^k{\rho_{-z_i}(f)(y)} - f \ast \mu_S(y)\right|^p}d\mu_S^k(z)}=O(pk^{-1}\|f\|_{\ell^p(G)}^2)^{p/2}.
\end{equation*}
Pick $k=O(\eta^{-2}p)$ such that the right hand side is at most $(\eta \|f\|_{\ell^p(G)}/4)^p$ and write $\mathcal{L}$ for the set of $x \in S^k$ for which the integrand above is at most $(\eta \|f\|_{\ell^p(G)}/2)^p$; by averaging $\mu_S^k(\mathcal{L}^c) \leq 2^{-p}$ and so $\mu_S^k(\mathcal{L}) \geq 1-2^{-p} \geq 1/2$.

Now, $\Delta:=\{(t,\dots,t): t \in T\}$ has $\mathcal{L}+\Delta  \subset (S+T)^k$, whence $|\mathcal{L} + \Delta | \leq 2L^k|\mathcal{L}|$ and so
\begin{equation*}
\langle 1_\Delta \ast 1_{-\Delta},1_{-\mathcal{L}} \ast 1_{\mathcal{L}}\rangle_{\ell^2(G^k)} = \|1_\mathcal{L} \ast 1_\Delta \|_{\ell^2(G^k)}^2\geq |\Delta|^2|\mathcal{L}|/2L^k,
\end{equation*}
by the Cauchy-Schwarz inequality.

By averaging it follows that at least $|\Delta|^2/2L^k$ pairs $(z,y) \in \Delta\times\Delta$ have $1_{-\mathcal{L}} \ast 1_{\mathcal{L}}(z-y)>0$, and hence there are at least $|\Delta|/2L^k=|T|/2L^k$ distinct elements $x \in T-T \subset G$ with $1_{-\mathcal{L}} \ast 1_{\mathcal{L}}(x,\dots,x)>0$; write $X$ for this set.

By design for each $x \in X$ there is some $z(x) \in \mathcal{L}$ and $y(x) \in \mathcal{L}$ such that $y(x)_i=z(x)_i+x$.  But then by the triangle inequality we get that
\begin{eqnarray*}
\|\rho_{-x}(f \ast \mu_S) - f \ast \mu_S\|_{\ell^p(G)}& \leq &\|\frac{1}{k}\sum_{i=1}^k{\rho_{-y(x)_i}(f)} -  f \ast \mu_S\|_{\ell^p(G)}\\&&+\|\rho_{-x}\left(\frac{1}{k}\sum_{i=1}^k{-\rho_{z(x)_i}(f)} - f \ast \mu_S \right)\|_{\ell^p(G)}.
\end{eqnarray*}
However, since $\rho_x$ is isometric on $\ell^p(G)$ we see that
\begin{eqnarray*}
\|\rho_{-x}(f \ast \mu_S) - f \ast \mu_S\|_{\ell^p(G)} &\leq & \|\frac{1}{k}\sum_{i=1}^k{\rho_{-y(x)_i}(f)}-  f \ast \mu_S\|_{\ell^p(G)}\\&&+\|\frac{1}{k}\sum_{i=1}^k{\rho_{-z(x)_i}(f)} - f \ast \mu_S\|_{\ell^p(G)} \leq 2(\eta\|f\|_{\ell^p(G)}/2),
\end{eqnarray*}
since $z(x),y(x) \in \mathcal{L}$.
\end{proof}
The real strength here is the quality of the bounds for large $p$.  For $p=2$ a stronger result follows from Chang's theorem (at least in the case of good modelling in the sense of Green and Ruzsa \cite{greruz::0}) which can actually be used to show that the set on which $f \ast \mu_S$ is approximately invariant is not just large, but it actually contains a large Bohr set.  The techniques for proving this are Fourier analytic in nature and yield doubly exponential dependence on $p$ if they are used to prove a version of the above result.

In the next section we shall make more careful use of the above result for large $p$, but here we just use the $p=2$ case to give a set of polynomial growth following the outline in the previous section.
\begin{proposition*}[Proposition \ref{prop.oldstateofart}]
Suppose that $|A+A| \leq K|A|$.  Then $A$ is $\exp(O(K\log K))$-covered by a symmetric neighbourhood of the identity of size at most $\exp(O(\log K))|A|$ and relative polynomial growth of order $O(K\log^3K)$.
\end{proposition*}
\begin{proof}
We put $f=1_{A}$ and apply the Croot-Sisask lemma with $p=2$, $S=T=A$, and a parameter $\eta/m$ (where $\eta$ and $m$ are to be optimised later) to get a symmetric neighbourhood of the identity, $X$, with $|X| \geq (2K)^{-O(m^2\eta^{-2})}|A|$ such that
\begin{equation*}
\|\rho_x(1_A \ast \mu_A) - 1_A \ast \mu_A\|_{\ell^2(G)}^2 \leq \eta^2m^{-2} |A| \text{ for all } x \in X.
\end{equation*}
It follows by the triangle inequality that
\begin{equation*}
\|\rho_x(1_A \ast \mu_A) - 1_A \ast \mu_A\|_{\ell^2(G)}^2 \leq \eta^2 |A| \text{ for all } x \in mX,
\end{equation*}
and then multiplying out the $\ell^2$-norm we see that
\begin{equation*}
2\|1_A \ast \mu_A\|_{\ell^2(G)}^2 - 2\langle \rho_x(1_A \ast \mu_A), 1_A \ast \mu_A \rangle_{\ell^2(G)} \leq \eta^2|A|.
\end{equation*}
Of course by the Cauchy-Schwarz inequality we have that
\begin{equation*}
\|1_A \ast \mu_A\|_{\ell^2(G)}^2\geq  \frac{1}{|A+A|}\|1_A \ast \mu_A\|_{\ell^1(G)}^2 \geq |A|/K,
\end{equation*}
thus if we set $\eta^2=1/K$ we get
\begin{equation*}
\langle \rho_x(1_A \ast \mu_A), 1_A \ast \mu_A \rangle_{\ell^2(G)} \geq |A|/2K.
\end{equation*}
It follows that $x \in 2A-2A$, and so $mX \subset 2A-2A$.  Now by Pl{\"u}nnecke's inequality we have that $|(3l+1)(2A-2A)| \leq K^{4(3l+1)}|A|$ and so
\begin{equation*}
|(3ml+1)X|\leq |(3l+1)mX| \leq (2K)^{O(l+m^2K)}|X|.
\end{equation*}
We put $l=m^2K+O(1)$ and write $k:=ml=m^3K+O(m)$ so that
\begin{equation*}
|(3k+1)X| \leq (2K)^{O(m^2K)}|X|=\exp(O(km^{-1} \log K))|X|.
\end{equation*}
We can then pick $m=O(\log K)$ such that the right hand side is strictly less than $2^k|X|$ and hence $|(3k+1)X| < 2^k|X|$.  Thus by Corollary \ref{cor.useful} we have that $X$ has relative polynomial growth of order $O(k)=O(K\log^3K)$.

On the other hand, since $X$ is symmetric we have $X-X \subset 2A-2A$ and so $|X-X| \leq K^4|A|$ by the Pl{\"u}nnecke-Ruzsa inequalities, but also $X+A \subset 3A-2A$.  Of course with these choices $|X| \geq \exp(-O(K\log K))|A|$ and hence $|X+A| \leq \exp(O(K\log K))|X|$ by the Pl{\"u}nnecke-Ruzsa inequalities.  With this information Lemma \ref{lem.convert} completes the proof.
\end{proof}
This  result gives bounds of roughly the same order as those of Green and Ruzsa \cite{greruz::0}, and more or less represents the state of the art prior to Schoen's work \cite{sch::1}.

\section{A weak Bogolyubov-Ruzsa-type lemma with strong bounds}\label{sec.kony}

This section contains most of the newest material and we shall start with a proof of an asymmetric weak Bogolyubov-Ruzsa-type lemma with good bounds in line with the aims of \S\ref{sec.wb}.

Before diving in it is worth making a few motivating remarks.  Our starting point is the argument at the end of the last section (the proof of Proposition \ref{prop.oldstateofart}).  The weakness there was that we had to apply the Croot-Sisask lemma with a very small choice of $\eta$.  This was because we have the lower bound
\begin{equation*}
\|1_A \ast \mu_A\|_{\ell^2(G)}^2 \geq |A|/K
\end{equation*}
which is small when compared with the trivial upper bound of $|A|$.  We should like something somewhat larger, but as it is the lower bound may well be nearly this small.  In \cite{sch::1} Schoen addressed this problem by proving the following important combinatorial lemma.
\begin{lemma}[{\cite[Lemma 3]{sch::1}}]  Suppose that $|A+A| \leq K|A|$ and $\epsilon\in (0,1]$.  Then there are sets $X \subset A-A$ and $Y \subset A+A$ such that $|X| \geq \exp(-O(2^{\epsilon^{-1}}\log K))|A|$ and $|Y| \geq |A|$ such that
\begin{equation*}
\|1_Y \ast \mu_X\|_{\ell^2(G)}^2 \geq K^{-2\epsilon}|Y|.
\end{equation*}
\end{lemma}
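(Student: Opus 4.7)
The plan is to reduce the target $\ell^2$-inequality to a relative doubling bound, and then produce sets with very small relative doubling by an iterative popularity argument. Applying Cauchy-Schwarz to the support of $1_Y \ast \mu_X$ (which is contained in $Y + X$) yields
\begin{equation*}
\|1_Y \ast \mu_X\|_{\ell^2(G)}^2 \geq \frac{\|1_Y \ast \mu_X\|_{\ell^1(G)}^2}{|\operatorname{supp}(1_Y \ast \mu_X)|} = \frac{|Y|^2}{|Y + X|},
\end{equation*}
so the claim reduces to finding $Y \subset A + A$ with $|Y| \geq |A|$ and $X \subset A - A$ with $|X| \geq \exp(-O(2^{1/\epsilon}\log K))|A|$ and $|Y + X| \leq K^{2\epsilon}|Y|$.

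The natural choice is $Y := A + A$, which automatically satisfies $|Y| \geq |A|$. The crude starting point $X^{(0)} := A - A$ has $|Y + X^{(0)}| = |3A - A| \leq K^4|A| \leq K^4|Y|$ by Pl{\"u}nnecke (Theorem \ref{thm.plun}), so the initial relative doubling exponent $\alpha_0 := \log_K(|Y+X^{(0)}|/|Y|)$ is $O(1)$. The plan is to refine to a nested sequence $X^{(0)} \supset X^{(1)} \supset \cdots \supset X^{(n)}$, all sitting inside $A - A$, along which $\alpha_i$ decreases geometrically until $\alpha_n \leq 2\epsilon$.

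The iterative step rests on the identity
\begin{equation*}
\sum_d r_{Y - Y}(d) \, r_{X^{(i)} - X^{(i)}}(d) \;=\; E(Y, X^{(i)}) \;\geq\; \frac{|Y|^2|X^{(i)}|^2}{|Y + X^{(i)}|} \;\geq\; \frac{|Y||X^{(i)}|^2}{K^{\alpha_i}},
\end{equation*}
where $r_{S - S}(d) := |\{(s_1, s_2) \in S \times S : s_1 - s_2 = d\}|$ and $E$ denotes the additive energy. A dyadic pigeonhole in the joint level sets of $r_{Y - Y}$ and $r_{X^{(i)} - X^{(i)}}$ produces a `popular' set $D_i$ on which $r_{Y - Y}(d) \geq c|Y|/K^{\alpha_i}$ and which carries a positive fraction of the mass of $X^{(i)} - X^{(i)}$. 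The first condition says $Y$ is nearly invariant under $D_i$-translations; combining this with a Balog-Szemer{\'e}di-Gowers-style refinement (or a more direct Petridis-style minimisation choosing $X^{(i+1)}$ to minimise $|Y + X^{(i+1)}|/|X^{(i+1)}|$) and the Ruzsa triangle inequality from \S\ref{sec.plun}, one extracts a subset $X^{(i+1)} \subset X^{(i)}$ with $|Y + X^{(i+1)}| \leq K^{\alpha_i/2}|Y|$.

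The hard part is precisely this extraction: passing from `$D_i$ is popular in $Y - Y$' to `$X^{(i+1)} \subset X^{(i)}$ has genuinely small sumset with $Y$' is the substantive combinatorial content of Schoen's paper, since popularity of a difference in $Y - Y$ does not by itself control the ambient sumset $|Y + D_i|$. Once the step is in place, $n = O(\log(1/\epsilon))$ halvings drive $\alpha_n$ below $2\epsilon$, and careful bookkeeping of the factor $|X^{(i+1)}|/|X^{(i)}|$ lost at each stage — which compounds roughly geometrically in $i$ in the worst case, since $\alpha_i$ shrinks but the threshold in the pigeonhole correspondingly slackens — gives the stated lower bound $|X| = |X^{(n)}| \geq \exp(-O(2^{1/\epsilon}\log K))|A|$.
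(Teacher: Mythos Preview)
The paper does not actually prove this lemma: immediately after stating it, the text says ``The proof of this is a beautiful induction using an observation of Katz and Koester \cite{katkoe::}, which we shall not, unfortunately, have time to pursue here.'' So the only thing to compare your proposal against is that one-line hint.

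Your Cauchy--Schwarz reduction to a relative sumset bound $|Y+X|\leq K^{2\epsilon}|Y|$ is exactly right and is also how Schoen begins. The high-level picture --- iterate to drive the relative exponent $\alpha_i=\log_K(|Y+X^{(i)}|/|Y|)$ down geometrically --- is likewise the correct shape. But the paper's hint names the engine you do not supply: the Katz--Koester observation. Concretely, for $d$ in (say) $B-B$ and $B_d:=B\cap(B+d)$ one has $A+B_d\subset (A+B)\cap(A+B+d)$. Thus popularity of $d$ as a difference of the \emph{sumset} $A+B$ (not of $Y-Y$ in your sense) immediately gives a large subset $B_d\subset B$ whose sum with $A$ lands in a set of size $\leq |A+B|-|A+B_d|$-type, and iterating this (together with Pl{\"u}nnecke to control higher sums) is what halves the exponent with controlled density loss. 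In Schoen's argument both sets are refined at each step, not only $X$.

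By contrast, the mechanisms you gesture at do not deliver the step. Petridis minimisation over $X'\subset X^{(i)}$ only yields $|Y+X'|/|X'|\leq |Y+X^{(i)}|/|X^{(i)}|$, which controls the ratio to $|X'|$, not to $|Y|$; there is no reason this halves $\alpha_i$. A BSG-style argument turns large energy into small doubling of \emph{subsets}, but with polynomial losses in the energy parameter rather than a clean halving of the exponent, and it does not naturally produce $|Y+X^{(i+1)}|\leq K^{\alpha_i/2}|Y|$ with $Y$ fixed. You yourself flag this (``popularity of a difference in $Y-Y$ does not by itself control $|Y+D_i|$''), and that is precisely the gap the Katz--Koester identity fills. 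As written, then, the proposal is an accurate outline of what needs to happen but not a proof: the decisive step is missing, and the substitutes you name would not give the stated bounds.
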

The proof of this is a beautiful induction using an observation of Katz and Koester \cite{katkoe::}, which we shall not, unfortunately, have time to pursue here.

Given this lemma we proceed along the lines of the proof of Proposition \ref{prop.oldstateofart} but using the sets $Y$ and $X$ given by the lemma instead of $A$ and this yields the following proposition.
\begin{proposition}
Suppose that $|A+A| \leq K|A|$.  Then $A$ is $\exp(\exp(O(\sqrt{\log K})))$-covered by a symmetric neighbourhood of the identity of size at most $\exp(O(\log K))|A|$ and relative polynomial growth of order $\exp(O(\sqrt{\log K}))$.
\end{proposition}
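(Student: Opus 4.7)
The plan is to mimic the proof of Proposition \ref{prop.oldstateofart} but with Schoen's lemma providing a much better lower bound on the relevant $\ell^2$-inner product. Concretely, I would first invoke Schoen's lemma with a parameter $\epsilon \in (0,1]$ (to be optimised at the end) to produce sets $X_0 \subset A - A$ and $Y \subset A+A$ with $|X_0| \geq \exp(-O(2^{\epsilon^{-1}}\log K))|A|$, $|Y| \geq |A|$, and $\|1_Y \ast \mu_{X_0}\|_{\ell^2(G)}^2 \geq K^{-2\epsilon}|Y|$.

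Next I would apply the Croot--Sisask lemma (Lemma \ref{lem.cs}) with $f = 1_Y$, $S = X_0$, $T = A$, $p = 2$ and slackness parameter $\eta/m$. The hypothesis requires $|X_0 + A| \leq L|X_0|$, which is satisfied with $L = \exp(O(2^{\epsilon^{-1}}\log K))$ by the Plünnecke--Ruzsa inequalities together with the lower bound on $|X_0|$. This yields a symmetric neighbourhood $X$ of the identity with $|X| \geq L^{-O(m^2 \eta^{-2})}|A|$ such that, after summing $m$ translates and using the triangle inequality, $\|\rho_x(1_Y \ast \mu_{X_0}) - 1_Y \ast \mu_{X_0}\|_{\ell^2(G)}^2 \leq \eta^2|Y|$ for all $x \in mX$. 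Choosing $\eta^2 = K^{-2\epsilon}$ and expanding the $\ell^2$-norm, the lower bound from Schoen's lemma gives $\langle \rho_x(1_Y \ast \mu_{X_0}),\ 1_Y \ast \mu_{X_0}\rangle > 0$; looking at supports this forces $mX \subset (Y + X_0) - (Y + X_0) \subset 4A - 4A$.

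Now I would use Plünnecke to get $|(3l+1)(4A - 4A)| \leq K^{O(l)}|A|$ and hence, using that $(3ml+1)X \subset (3l+1)(mX)$,
\begin{equation*}
|(3ml+1)X| \leq K^{O(l)}|A| \leq \exp\bigl(O(l\log K) + O(m^2 K^{2\epsilon} 2^{\epsilon^{-1}} \log K)\bigr)|X|.
\end{equation*}
Setting $k := ml$, I would pick $m = O(\log K)$ to absorb the first term into $k \log 2$, then $l = O(m K^{2\epsilon} 2^{\epsilon^{-1}} \log K)$ to absorb the second, which yields $|(3k+1)X| < 2^k|X|$ with $k = O(K^{2\epsilon} 2^{\epsilon^{-1}} \log^3 K)$. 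The main obstacle is the choice of $\epsilon$: the terms $K^{2\epsilon}$ and $2^{\epsilon^{-1}}$ pull in opposite directions, and balancing $2\epsilon \log K \asymp \epsilon^{-1}$ forces $\epsilon \asymp 1/\sqrt{\log K}$, giving both factors of size $\exp(O(\sqrt{\log K}))$ and hence $k = \exp(O(\sqrt{\log K}))$. Corollary \ref{cor.useful} then endows $X$ with relative polynomial growth of order $O(k) = \exp(O(\sqrt{\log K}))$.

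Finally, since $X \subset 4A - 4A$ the Plünnecke--Ruzsa inequalities give $|A + X| \leq K^9|A| \leq \exp(\exp(O(\sqrt{\log K})))|X|$ (using the lower bound on $|X|$, which is $\exp(-\exp(O(\sqrt{\log K})))|A|$ with our optimal $\epsilon$). Applying Lemma \ref{lem.convert} therefore produces the desired covering: $A$ is $\exp(\exp(O(\sqrt{\log K})))$-covered by $X - X$, which is a symmetric neighbourhood of the identity, has relative polynomial growth of order $\exp(O(\sqrt{\log K}))$, and has size $|X - X| \leq |8A - 8A| \leq \exp(O(\log K))|A|$ by one more application of Plünnecke--Ruzsa.
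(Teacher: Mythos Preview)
Your proposal is correct and follows exactly the approach the paper indicates: it says only that ``we proceed along the lines of the proof of Proposition~\ref{prop.oldstateofart} but using the sets $Y$ and $X$ given by the lemma instead of $A$'', and you have filled in precisely those details, applying Croot--Sisask to $1_Y \ast \mu_{X_0}$ with $\eta^2 = K^{-2\epsilon}$, deducing $mX \subset 4A-4A$, and then balancing $K^{2\epsilon}$ against $2^{\epsilon^{-1}}$ at $\epsilon \asymp 1/\sqrt{\log K}$ before finishing with Corollary~\ref{cor.useful} and Lemma~\ref{lem.convert}.
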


Our approach here is somewhat different and instead of taking the inner product of $1_A \ast 1_A$ (or $1_Y \ast 1_X$) with itself we take a different function following L{\'o}pez and Ross \cite{lopros::}:
\begin{equation*}
\langle 1_A \ast \mu_A , 1_{A+A}\rangle = |A|.
\end{equation*}
Given the above identity we should like to analyse $1_{A+A} \ast \mu_{A}$ using the Croot-Sisask lemma; we do this now in the more convenient case of symmetric sets although the argument is not essentially different.
\begin{proposition}\label{prop.key}
Suppose that $S \subset G$ is symmetric and $|S+S| \leq K|S|$, $T$ has $|S+T| \leq L|S|$, and $m \in \N$ is a parameter.  Then there is a symmetric neighbourhood of the identity, $X$, with
\begin{equation*}
|X| \geq \exp(-O(m^2\log K \log L))|T| \text{ and } mX \subset 4S.
\end{equation*}
\end{proposition}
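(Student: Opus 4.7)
The plan is to apply the Croot-Sisask lemma (Lemma \ref{lem.cs}) directly to the function $f = 1_{2S}$ with the pair $(S,T)$ exactly as in our hypotheses (note $|S+T| \leq L|S|$ matches the Croot-Sisask setup), with tolerance $\eta = 1/(2m)$ and exponent $p = O(\log K)$ chosen large enough that $K^{1/p}$ is bounded by a constant strictly less than $2$. With these parameters the size of $X$ drops straight out of Croot-Sisask: $|X| \geq (2L)^{-O(\eta^{-2}p)}|T| = (2L)^{-O(m^2 \log K)}|T| = \exp(-O(m^2 \log K \log L))|T|$, so essentially the whole task reduces to verifying the containment $mX \subset 4S$.

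For this containment, Croot-Sisask produces $\|\rho_x(1_{2S} \ast \mu_S) - 1_{2S} \ast \mu_S\|_{\ell^p(G)} \leq \eta|2S|^{1/p}$ for each $x \in X$, and the standard telescoping argument (exploiting the isometry of $\rho_x$) upgrades this to $\|\rho_x(1_{2S} \ast \mu_S) - 1_{2S} \ast \mu_S\|_{\ell^p(G)} \leq \tfrac12 |2S|^{1/p}$ for every $x \in mX$. Following the L\'opez-Ross philosophy I pair $1_{2S}\ast \mu_S$ against $1_S$ rather than against itself. A direct unpacking gives $(1_{2S}\ast\mu_S)(y) = |S|^{-1}|2S \cap (y+S)| = 1$ for $y \in S$, since $y+S \subset S+S = 2S$, and hence $\langle 1_{2S}\ast\mu_S, 1_S\rangle = |S|$. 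On the other hand, if $x \notin 4S$ then $y + x \notin 3S = \supp(1_{2S}\ast\mu_S)$ for every $y \in S$ (otherwise $x \in 3S - S = 3S + S = 4S$, using $-S=S$), which forces $\langle \rho_x(1_{2S}\ast\mu_S), 1_S\rangle = 0$. Combining via H\"older's inequality,
\[
|S| \;=\; |\langle (1_{2S}\ast\mu_S) - \rho_x(1_{2S}\ast\mu_S),\, 1_S\rangle| \;\leq\; \tfrac12 |2S|^{1/p}|S|^{1-1/p} \;\leq\; \tfrac12 K^{1/p}|S|,
\]
and the choice $p = \lceil 2\log_2 K \rceil$ makes $K^{1/p} \leq \sqrt{2}$, producing a contradiction. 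Hence $mX \subset 4S$.

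The main design choice (and the only real obstacle) is the selection of the test function in the pairing. The naive choice of testing against $1_{3S}$ (the full support of $1_{2S}\ast\mu_S$) would cost a H\"older factor of $|3S|^{1/p'}$, which via Pl\"unnecke is $K^{3(1-1/p)}|S|^{1-1/p}$ and swamps any $\eta$ gain; testing against $1_S$ costs only $|S|^{1-1/p}$, but one must still verify that the inner product stays as large as $|S|$, which is precisely the L\'opez-Ross observation that $1_{2S}\ast\mu_S$ is identically $1$ on $S$. Once this test function is in place the parameters are forced: $\eta \sim 1/m$ is required so that the telescoped error on $mX$ remains below $\tfrac12 |2S|^{1/p}$, and $p \sim \log K$ is required so that the H\"older-plus-Pl\"unnecke loss $K^{1/p}$ is absorbed into a harmless constant; these combine in the Croot-Sisask exponent $\eta^{-2}p \sim m^2 \log K$ to yield exactly the claimed $\exp(-O(m^2 \log K \log L))|T|$ lower bound.
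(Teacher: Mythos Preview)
Your proof is correct and follows essentially the same approach as the paper: apply Croot--Sisask to $f=1_{2S}$ with $\eta\sim 1/m$ and $p\sim\log K$, telescope to $mX$, and then test against (a normalisation of) $1_S$ using the L\'opez--Ross identity $\langle 1_{2S}\ast\mu_S,1_S\rangle=|S|$ together with H\"older to force $mX\subset 4S$. The only cosmetic differences are that the paper pairs against $\mu_S$ rather than $1_S$ and phrases the containment argument directly rather than by contradiction; one small care point is to ensure $p\geq 2$ (the paper takes $p=2+\log K$), but this is absorbed by your $O(\log K)$.
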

\begin{proof}
We put $f=1_{S+S}$ and apply the Croot-Sisask lemma with a parameter $\eta$ (to be optimised later) to get a symmetric neighbourhood of the identity, $X$, with $|X| \geq (2L)^{-O(\eta^{-2}p)}|T|$ such that
\begin{equation*}
\|\rho_x(1_{S+S} \ast \mu_S) - 1_{S+S}\ast \mu_S\|_{\ell^p(G)} \leq \eta \|1_{S+S}\|_{\ell^p(G)} \text{ for all } x \in X.
\end{equation*}
It follows by the triangle inequality that
\begin{equation*}
\|\rho_x(1_{S+S} \ast \mu_S) - 1_{S+S}\ast \mu_S\|_{\ell^p(G)} \leq \eta m \|1_{S+S}\|_{\ell^p(G)} \text{ for all } x \in mX.
\end{equation*}
Taking an inner product with $\mu_S$ we see that
\begin{equation*}
|\langle \rho_x(1_{S+S} \ast \mu_S),\mu_S\rangle - \langle 1_{S+S} \ast \mu_S,\mu_S\rangle| \leq \eta m \|1_{S+S}\|_{\ell^p(G)}\|\mu_S\|_{\ell^{p'}(G)}
\end{equation*}
where $p'$ is the conjugate exponent to $p$.  Now
\begin{equation*}
 \langle 1_{S+S} \ast \mu_S,\mu_S\rangle = \langle 1_{S+S},\mu_S\ast \mu_{S}\rangle = 1,
\end{equation*}
since $S$ is symmetric and $\supp \mu_S \ast\mu_{S} \subset S+S$.  Thus
\begin{equation*}
|\mu_{S} \ast 1_{S+S} \ast \mu_S(x) - 1| \leq \eta m \|1_{S+S}\|_{\ell^p(G)}\|\mu_S\|_{\ell^{p'}(G)} \leq \eta m K^{1/p}.
\end{equation*}
We take $p = 2+\log K$, and then $\eta = \Omega(m^{-1})$ such that the term on the right is at most $1/2$ to get the desired conclusion.
\end{proof}
As a consequence of this we already get the following poly-logarithmic bounds.
\begin{proposition*}[Proposition \ref{prop.grow1}]
Suppose that $|A+A| \leq K|A|$.  Then $A$ is $\exp(O(\log^{4}K))$-covered by a symmetric neighbourhood of the identity of size at most $\exp(O(\log K))|A|$ and relative polynomial growth of order $O(\log^{4}K)$.
\end{proposition*}
\begin{proof}
We apply the previous result with $T=S=A-A$ and a parameter $m \in \N$ to be optimised later to get a symmetric neighbourhood of the identity, $X$, with
\begin{equation*}
|X| \geq \exp(-O(m^2\log^2K))|A-A| \text{ and } mX \subset 4(A-A).
\end{equation*}
Given $l \in \N$ also to be optimised later, by the Pl{\"u}nnecke's inequality we have that
\begin{equation*}
|(3ml+1)X| \leq |(3l+1)4(A-A)| \leq K^{O(l)}|A-A| \leq K^{O(l)}\exp(O(m^2\log^2K))|X|.
\end{equation*}
We now put $l=m^2\log K+O(1)$ and write $k:=ml=m^3\log K +O(m)$ so that we have
\begin{equation*}
|(3k+1)X| \leq \exp(O(m^2\log^{2}K))|X| = \exp(O(km^{-1}\log K))|X|.
\end{equation*}
We can then pick $m=O(\log K)$ such that the right hand side is strictly less than $2^k|X|$ and hence $|(3k+1)X| < 2^k|X|$.  Thus by Corollary \ref{cor.useful} we have that $X$ has relative polynomial growth of order $O(k)=O(\log^4K)$.

On the other hand we have $X-X \subset 4A-4A$ and so $|X-X| \leq K^8|A|$ by the Pl{\"u}nnecke-Ruzsa inequalities, but also $X+A \subset 5A-4A$.  Hence $|X+A| \leq \exp(O(\log^4K))|X|$ by the Pl{\"u}nnecke-Ruzsa inequalities.  With this information Lemma \ref{lem.convert} completes the proof.
\end{proof}
We saw Proposition \ref{prop.key} with $S=T$ is already rather powerful, but Konyagin introduced a rather nice bootstrapping technique whereby the result is first applied iteratively to reduce $L$ to $O(1)$.  To do this we first note the following corollary of Proposition \ref{prop.key}.
\begin{corollary}
Suppose that $S \subset G$ is a symmetric neighbourhood of the identity and $|S+S| \leq K|S|$, $T$ is a symmetric neighbourhood of the identity with $|S+T| \leq L|S|$, and $D\geq 1$ is a parameter.  Then there is some symmetric neighbourhood of the identity, $T'$, such that
\begin{equation*}
|T'| \geq \exp(-O(D^2\log L\log K))|T|,
\end{equation*}
and a symmetric neighbourhood of the identity $S'$ with $S \subset S' \subset 5S$ and $|S'+T'| \leq K^{1/D}|S'|$.
\end{corollary}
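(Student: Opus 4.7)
The plan is to apply Proposition \ref{prop.key} once (with parameter on the order of $D$) to get a single symmetric neighbourhood $X$ with the right size, and then use a pigeonhole argument along the chain $S \subset S+X \subset S+2X \subset \cdots$ to find the desired enlargement $S'$. The final answer will be $T'=X$ and $S' = S + jX$ for an appropriately chosen $j$.

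More concretely, first I would apply Proposition \ref{prop.key} with parameter $m := 5D$ (any constant multiple of $D$ would do), obtaining a symmetric neighbourhood of the identity $X$ with
\begin{equation*}
|X| \geq \exp(-O(D^2 \log K \log L))|T| \quad \text{and} \quad mX \subset 4S.
\end{equation*}
Since $0_G \in X$, we have $jX \subset mX \subset 4S$ for every $0 \leq j \leq m$, and hence $S + jX \subset S + 4S = 5S$. In particular the chain
\begin{equation*}
S = S+0X \subset S+X \subset S+2X \subset \cdots \subset S+mX \subset 5S
\end{equation*}
consists of symmetric neighbourhoods of the identity that all sit inside $5S$.

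Next, I would telescope: Pl{\"u}nnecke's inequality (Theorem \ref{thm.plun}) gives $|5S| \leq K^5|S|$, and therefore
\begin{equation*}
\prod_{j=0}^{m-1}\frac{|S+(j+1)X|}{|S+jX|} = \frac{|S+mX|}{|S|} \leq K^5.
\end{equation*}
Taking logarithms and pigeonholing, there exists some $j \in \{0,1,\dots,m-1\}$ with
\begin{equation*}
\frac{|S+(j+1)X|}{|S+jX|} \leq K^{5/m} = K^{1/D}.
\end{equation*}
I would then set $S' := S+jX$ and $T' := X$. Since $S$ and $X$ are symmetric and contain $0_G$, so does $S'$; it satisfies $S \subset S' \subset 5S$ by construction; and $S' + T' = S+(j+1)X$, so the displayed inequality is exactly $|S'+T'| \leq K^{1/D}|S'|$, as required.

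There is no real obstacle here: the substantive content is already packed into Proposition \ref{prop.key}, and the main decision is how to trade off the parameter $m$ in Proposition \ref{prop.key} against the loss in the pigeonhole step. Choosing $m$ proportional to $D$ is forced — smaller $m$ makes the pigeonhole bound $K^{5/m}$ exceed $K^{1/D}$, while larger $m$ worsens the $\exp(-O(m^2 \log K \log L))$ factor unnecessarily — and this choice delivers the advertised bound $|T'| \geq \exp(-O(D^2 \log K \log L))|T|$.
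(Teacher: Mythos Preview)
Your proposal is correct and is essentially identical to the paper's own proof: apply Proposition~\ref{prop.key} with parameter $m=O(D)$ to obtain the set $X$ (which the paper calls $T'$ directly), note $mX\subset 4S$ so that $S+mX\subset 5S$ and $|5S|\leq K^5|S|$ by Pl{\"u}nnecke, and then pigeonhole along the chain $S\subset S+X\subset\cdots\subset S+mX$ to find $S'=S+jX$ with $|S'+X|\leq K^{5/m}|S'|\leq K^{1/D}|S'|$. The only differences are cosmetic (naming and the explicit choice $m=5D$ versus the paper's $k=O(D)$).
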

\begin{proof}
Let $k$ be a natural number to be optimised later and apply Proposition \ref{prop.key} to get a symmetric neighbourhood of the identity, $T'$, such that
\begin{equation*}
|T'| \geq \exp(-O(k^2\log L \log K))|T| \text{ and } 4S \supset kT'.
\end{equation*}
It follows that $|S+kT'| \leq |5S| \leq K^5|S|$ by Pl{\"u}nnecke's inequality.  Thus by the pigeon-hole principle there is some $l \in \{0,\dots,k-1\}$ such that
\begin{equation*}
|(S+lT') + T'| \leq K^{5/k}|S+lT'|.
\end{equation*}
Of course we can pick $k=O(D)$ such that $K^{5/k} \leq K^{1/D}$ and so putting $S':=S+lT'$ the corollary is proved.
\end{proof}
The pigeon-holing trick was developed by Tao in \cite{tao::9} to establish a Fre{\u\i}man-type result in the non-Abelian setting but has since found use in the Abelian setting.

We are now in a position to apply the above corollary iteratively.
\begin{proposition}\label{prop.kony}
Suppose that $|A+A| \leq K|A|$.  Then there is some symmetric neighbourhood of the identity, $T$, a natural number $r=O(\log \log K)^{O(1)}$, and a symmetric neighbourhood of the identity $S$ with $A-A \subset S \subset r(A-A)$ and
\begin{equation*}
|S + T| =O(|S|) \text{ and } |T| \geq \exp(-O(\log \log K)^{O(1)}\log^3 K)|S|.
\end{equation*}
\end{proposition}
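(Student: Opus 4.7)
The plan is to iterate the preceding corollary, initialising with $S_0 = T_0 = A - A$. By Pl\"unnecke-Ruzsa, $|2A-2A| \leq K^4|A|$, and hence $|S_0 + S_0|, |S_0 + T_0| \leq K^4|S_0|$; writing $K_i := |S_i+S_i|/|S_i|$ and $L_i := |S_i+T_i|/|S_i|$ we have $K_0, L_0 \leq K^4$. Given $(S_i,T_i)$, apply the corollary with a parameter $D_i$ (to be chosen) to obtain $(S_{i+1},T_{i+1})$ with $S_i \subset S_{i+1} \subset 5S_i$, $L_{i+1} \leq K_i^{1/D_i}$, and $|T_{i+1}| \geq \exp(-O(D_i^2 \log K_i \log L_i))|T_i|$. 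Since $S_{i+1} + S_{i+1} \subset 10 S_i$, a further use of Pl\"unnecke-Ruzsa gives the crude propagation $K_{i+1} \leq K_i^{10}$.

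The key choice is $D_i := \lceil \log K_i / \sqrt{\log L_i}\, \rceil$, which yields $\log L_{i+1} \leq \sqrt{\log L_i}$ and a per-step cost of order $(\log K_i)^3$. The square-root recursion reduces $\log\log L_i$ by a factor of $2$ each step, so after $N = O(\log\log\log K)$ steps we reach $L_N = O(1)$. Because $N$ is so small, throughout the iteration $\log K_i \leq 10^N \log K_0 = (\log\log K)^{O(1)} \log K$, and each step's cost is bounded by $(\log\log K)^{O(1)} \log^3 K$; multiplying over the $N$ steps gives
\begin{equation*}
|T_N| \geq \exp\bigl(-(\log\log K)^{O(1)}\log^3 K\bigr)\,|T_0|.
\end{equation*}

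Setting $S := S_N$, $T := T_N$, and $r := 5^N$, we have $|S + T| = O(|S|)$ and $A - A \subset S \subset r(A - A)$ with $r = 5^{O(\log\log\log K)} = (\log\log K)^{O(1)}$. To convert the bound on $|T_N|/|T_0|$ into one on $|T|/|S|$ one uses $|T_0| \geq |A|$ and $|S_N| \leq K^{O(5^N)}|A| = \exp((\log\log K)^{O(1)}\log K)|A|$ from Pl\"unnecke-Ruzsa, so the ratio $|T_0|/|S_N|$ only costs an $\exp((\log\log K)^{O(1)}\log K)$ factor which is absorbed into the main error. The main obstacle is calibrating the decay schedule for the $L_i$ correctly: reducing $\log L$ from $\log K$ to $O(1)$ in a single step forces $D \sim \log K$ and recovers only the $\log^4 K$ bound of Proposition \ref{prop.grow1}, while halving $\log L_i$ at each step needs $\Omega(\log\log K)$ iterations and inflates $r$ to a power of $\log K$; the square-root schedule is what simultaneously gives $N = O(\log\log\log K)$ and per-step cost $(\log K_i)^3 = (\log\log K)^{O(1)}\log^3 K$.
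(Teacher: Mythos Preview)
Your proof is correct and follows essentially the same route as the paper: both iterate the preceding corollary starting from $S_0=T_0=A-A$, use a square-root schedule $\log L_{i+1}\lesssim\sqrt{\log L_i}$ (the paper fixes the target $L_i=\exp(4(\log 2K)^{2^{-i}})$ and chooses $D_i$ accordingly, while you choose $D_i=\lceil\log K_i/\sqrt{\log L_i}\rceil$ and derive the recursion), and terminate after $N=O(\log\log\log K)$ steps so that $r=5^N=(\log\log K)^{O(1)}$. The only cosmetic difference is that the paper bounds $K_i$ via $S_i\subset 5^i(A-A)$ and Pl\"unnecke--Ruzsa on $A$ directly, whereas you propagate $K_{i+1}\leq K_i^{10}$; both give $\log K_i=\exp(O(i))\log K$ and hence the same per-step cost $(\log K_i)^3=(\log\log K)^{O(1)}\log^3 K$.
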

\begin{proof}
We define two sequences of sets $(S_i)_i$ and $(T_i)_i$, and a sequence of reals $(L_i)_i$ such that $S_i$ and $T_i$ are symmetric neighbourhoods of the identity, and
\begin{equation*}
A-A \subset S_i \subset 5^i(A-A) \text{ and } |S_i+T_i| \leq L_i|S_i|,
\end{equation*}
where $L_i=\exp(4(\log 2K)^{2^{-i}})$.  To start with we put $S_0:=A-A$ and $T_0:=A-A$ which satisfies the requirements by the Pl{\"u}nnecke-Ruzsa inequalities.  At stage $i$ we note that
\begin{equation*}
|S_i+S_i| \leq |2 \cdot 5^i(A-A)| \leq K^{4\cdot 5^i}|A-A| \leq K^{4\cdot 5^i}|S_i|
\end{equation*}
by the Pl{\"u}nnecke-Ruzsa inequalities.  We apply the previous corollary to the sets $S_i$ and $T_i$ with parameter $D_i:=1+(\log(2K^{4 \cdot 5^i}))^{1-2^{-(i+1)}}$ to get symmetric neighbourhoods of the identity $S_{i+1}$ and $T_{i+1}$, with
\begin{eqnarray*}
|T_{i+1}| &\geq &\exp(-O(D_i^2(\log L_i) (\log  K^{4\cdot 5^i})))|T_i|\geq \exp(-O(\exp(O(i))\log^3 K))|T_i|,
\end{eqnarray*}
\begin{equation*}
A-A \subset S_i \subset S_{i+1} \subset 5S_i \subset 5^{i+1}(A-A) 
\end{equation*}
and
\begin{equation*}
|S_{i+1} + T_{i+1}| \leq \exp(4(\log2K)^{2^{-(i+1)}})|S_{i+1}|.
\end{equation*}
We terminate the iteration when $2^{i+O(1)} = \log 2\log 2K$ and find that the result is proved with $S=S_i$ and $T=T_i$.
\end{proof}
Finally we have the strongest result of the section and the driving ingredient in this survey.
\begin{proposition*}[Proposition \ref{prop.grow2}]
Suppose that $|A+A| \leq K|A|$.  Then $A$ is $\exp(O(\log^{3+o(1)}K))$-covered by a symmetric neighbourhood of the identity  of size at most $\exp(O(\log^{1+o(1)}K))|A|$ and relative polynomial growth of order $O(\log^{3+o(1)}K)$.
\end{proposition*}
\begin{proof}
We apply Proposition \ref{prop.kony} to the set $A$ to get symmetric neighbourhoods of the identity $S$ and $T$, and a natural number $r=O(\log^{o(1)}K)$ such that
\begin{equation*}
A-A \subset S \subset r(A-A), |S+T| =O(|S|) \text{ and } |T| \geq \exp(-O(\log^{3+o(1)}K))|S|.
\end{equation*}
Now, by Proposition \ref{prop.key} applied to the sets $S$ and $T$ with a parameter $m$ to be optimised later we get a symmetric neighbourhood of the identity $X$ with
\begin{equation*}
|X| \geq \exp(-O(m^2\log^{1+o(1)} K))|T| \text{ and } mX \subset 4S.
\end{equation*}
Given $l \in \N$ also to be optimised later, by Pl{\"u}nnecke's inequality we have that
\begin{eqnarray*}
|(3ml+1)X| &\leq &|(3l+1)4S| \leq K^{O(l)}|S|\\ & \leq & K^{O(l)}\exp(O(m^2\log^{1+o(1)}K) + O(\log^{3+o(1)}K))|X|
\end{eqnarray*}
We now put $l=m^2\log^{o(1)} K$ and write $k:=ml=m^3\log^{o(1)}K$ so that we have 
\begin{equation*}
|(3k+1)X| \leq \exp(O(k(m^{-1}\log^{1+o(1)}K+m^{-3}\log^{3+o(1)}K))|X|.
\end{equation*}
We can then pick $m=\log^{1+o(1)} K$ such that the right hand side is strictly less than $2^k|X|$ and hence $|(3k+1)X| < 2^k|X|$.  Thus by Corollary \ref{cor.useful} we have that $X$ has relative polynomial growth of order $O(k)=O(\log^{3+o(1)}K)$.

On the other hand we have $X-X \subset 4S \subset 4r(A-A)$ and so, by the Pl{\"u}nnecke-Ruzsa inequalities, we have
\begin{equation*}
|X-X| \leq |4rA - 4rA| \leq K^{8r}|A| \leq \exp(O(\log^{1+o(1)}K))|A|.
\end{equation*}
This set inclusion (and the fact that $0_G \in X$) also tells us that $X+A \subset 4r(A-A)+A$.  Hence, by the Pl{\"u}nnecke-Ruzsa inequalities again, and the fact that $|A| \leq |S| \leq \exp(O(\log^{3+o(1)}K))|X|$ we have
\begin{equation*}
|X+A| \leq K^{8r+1}|A| = \exp(O(\log^{1+o(1)}K))|A| \leq \exp(O(\log^{3+o(1)}K))|X|
\end{equation*}
With this information Lemma \ref{lem.convert} completes the proof.
\end{proof}
It may be worth saying that all the $\log^{o(1)}K$ terms in the above proposition can be replaced by $(\log \log K)^{O(1)}$ terms if desired.

\section{From relative polynomial growth to convex coset progressions}\label{sec.rpgccp}

Our aim in the next few sections it to prove Theorem \ref{thm.rtc} which we restate now for convenience.
\begin{theorem*}[Theorem \ref{thm.rtc}]
Suppose that $X$ has relative polynomial growth of order $d$.  Then there is a centred convex coset progression $M$ such that
\begin{equation*}
X-X \subset M, \dim M =O(d\log^2 d) \text{ and } |M| \leq \exp(O(d\log^2 d))|X|.
\end{equation*}
\end{theorem*}
We shall make considerable use of harmonic analysis on discrete groups to do this and so it will be useful to record some definitions.  The classic reference is Rudin \cite{rud::1} although the reader will be equally well served by Tao and Vu \cite{taovu::}.

We have already introduced convolution, and the Fourier transform is defined to diagonalise the operators induced by convolution, so we are already have quite a bit of what we need.

Given $G$ (discrete) we write $\wh{G}$ for the set of homomorphisms $\gamma:G \rightarrow S^1$ where $S^1:=\{z \in \C:|z|=1\}$.  These homomorphisms are called \textbf{characters} and the set $\wh{G}$ naturally supports the structure of a topological group, in particular a compact Abelian group under point-wise multiplication of characters, called the \textbf{dual group} of $G$.

The dual group is naturally endowed with a translation invariant probability measure called the Haar probability measure and we are now in a position to define the Fourier transform.  Given $f \in \ell^1(G)$ we define the \textbf{Fourier transform} of $f$ to be the function $\wh{f} \in L^\infty(\wh{G})$ determined by
\begin{equation*}
\wh{f}(\gamma):=\sum_{x \in G}{f(x)\overline{\gamma(x)}} \text{ for all } \gamma \in \wh{G}.
\end{equation*}
This has the property that $\wh{f \ast g} = \wh{f} \cdot \wh{g}$.  More than this we have Plancherel's formula which tells us that
\begin{equation*}
\langle f,g\rangle_{\ell^2(G)} = \langle \wh{f},\wh{g}\rangle_{L^2(\wh{G})} \text{ for all }f,g \in \ell^2(G).
\end{equation*}
We have already indicated that $\wh{G}$ has a natural topology, and in fact if $G$ is small enough this topology is induced by a metric.  There are then a range of metrics which define different topologies of $\wh{G}$ reflecting the subgroup structure of $\wh{G}$.  These can be defined by bases of what are called Bohr sets.

Given a neighbourhood $\Gamma$ of characters on $G$ and a parameter $\delta \in (0,2]$ we define the \textbf{Bohr set} with \textbf{frequency set } $\Gamma$ and \textbf{width} $\delta$ to be the set
\begin{equation*}
\Bohr(\Gamma,\delta):=\{x \in G: |\gamma(x) - 1| \leq \delta \text{ for all } \gamma \in \Gamma\}.
\end{equation*}
One rather useful property of Bohr sets which we use repeatedly is the fact that they are balls in a pseudo-metric.  What we mean by this is that for a character $\gamma \in \wh{G}$ we have the very useful triangle inequality
\begin{equation*}
|1-\gamma(x+y)| = |1-\gamma(x) + (1-\gamma(y))\gamma(x)| \leq |1-\gamma(x)| + |1-\gamma(y)|
\end{equation*}
for all $x,y \in G$.

The first ingredient in proving Theorem \ref{thm.rtc} is to show that in some sense the topology determined by a set $X$ is roughly the same as that  determined by certain Bohr sets.
\begin{proposition}\label{prop.polytobohr}
Suppose that $X$ has relative polynomial growth of order $d$.  Then there is a neighbourhood of characters $\Gamma$ and a natural number $k=O(d\log^2 d)$ such that
\begin{equation*}
X-X \subset \Bohr(\Gamma,1/(4(3k+1))) \text{ and }|\Bohr(\Gamma,1/2)| < 2^k|X|.
\end{equation*}
\end{proposition}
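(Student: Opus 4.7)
Plan. The plan is to take the ``annihilator'' of $X - X$ as the frequency set and reduce the second conclusion to a lower bound on the Haar measure of $\Gamma$ via Fourier duality. Specifically, for a parameter $k$ to be chosen at the end, put $\delta := 1/(4(3k+1))$ and set
\[
\Gamma := \{\gamma \in \wh G : |1 - \gamma(x)| \leq \delta \text{ for all } x \in X - X\}.
\]
Then $X - X \subset \Bohr(\Gamma, \delta)$ is immediate from the definition, delivering the first conclusion.

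For the bound on $B := \Bohr(\Gamma, 1/2)$, I would regard $1_\Gamma$ as a function on the compact group $\wh G$ and form its inverse Fourier transform on $G$. For each $y \in B$ and $\gamma \in \Gamma$ we have $|1 - \gamma(y)| \leq 1/2$, so
\[
\left|\int_{\wh G} 1_\Gamma(\gamma)\gamma(y)\,d\gamma\right| \geq \mu(\Gamma) - \int_\Gamma |1 - \gamma(y)|\,d\gamma \geq \mu(\Gamma)/2.
\]
Parseval on $\wh G$ then gives $|B|(\mu(\Gamma)/2)^2 \leq \|1_\Gamma\|_{L^2(\wh G)}^2 = \mu(\Gamma)$, i.e.\ $|B| \leq 4/\mu(\Gamma)$. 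The second conclusion thus reduces to the estimate $\mu(\Gamma)|X| \geq 4 \cdot 2^{-k}$.

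For this lower bound I would convert the polynomial growth of $X$ into low-dimensional structure via Chang's covering lemma (Lemma~\ref{lem.ccl}). The estimate $|nX| \leq n^d|X|$ gives $|\ell X + X| < 2^\ell|X|$ for some $\ell = O(d \log d)$, so Chang's lemma extracts a set $T \subset X$ of size $|T| < \ell$ with $X \subset \Span(T) + X - X$. One then argues (iterating the containment and using the triangle inequality for characters) that $\gamma$ lies in $\Gamma$ provided $\gamma$ is close enough to $1$ on each generator in $T$; applying a standard product-Fejer-kernel volume estimate on $\wh G$ for the intersection of these $|T|$ single-character Bohr conditions delivers
\[
\mu(\Gamma)|X| \geq (c\delta)^{O(d\log d)} = (c/k)^{O(d\log d)}.
\]
Choosing $k$ of the order $d\log^2 d$ then makes the right-hand side at least $4 \cdot 2^{-k}$ and finishes the proof.

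The main obstacle is the third step: converting the qualitative containment produced by Chang's lemma into the quantitative lower bound on $\mu(\Gamma)$. The delicate point is controlling how tightly one must insist that $\gamma$ be near $1$ on each generator in $T$ so that near-constancy propagates all the way to $X - X$, since each unit of scale-tightening costs a factor in the Fejer exponent. It is this propagation loss---combined with the $\log d$ inflation coming from Chang's lemma producing $|T| = O(d\log d)$ rather than $d$---that generates the $\log^2 d$ in the final bound on $k$, as opposed to the single $\log d$ one can obtain by hand when $X$ is already cube-shaped in $\Z^d$.
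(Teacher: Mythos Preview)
Your reduction in steps 1--2 is correct and rather elegant: your $\Gamma$ is the largest frequency set for which the first conclusion holds, and the Parseval bound $|B|\leq 4/\mu(\Gamma)$ is a clean way to reduce the second conclusion to a lower bound on $\mu(\Gamma)$.  The gap is in step 3, and it is genuine.

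The Chang containment $X \subset \Span(T) + (X-X)$ cannot be iterated to propagate near-constancy from $T$ to $X-X$, because $X-X$ appears on the right-hand side.  Concretely: if $|1-\gamma(t)|\leq\eta$ for $t\in T$ then $|1-\gamma(s)|\leq |T|\eta$ for $s\in\Span(T)$, but to conclude anything about $\gamma$ on $X-X$ from $X-X\subset 2\Span(T)+2(X-X)$ you would already need control on $X-X$.  No amount of iteration breaks this circularity.  Moreover, the ``standard product-Fejer volume estimate'' $\mu\bigl(\bigcap_{t\in T}\{|1-\gamma(t)|\leq\eta\}\bigr)\geq (c\eta)^{|T|}$ is \emph{false} for infinite discrete $G$: take $G=\Z$, $T=\{1,N\}$, and the dual Bohr set in $\mathbb{T}$ has measure $\sim\eta/N$, not $\eta^2$.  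So even granting a generating set $T$, the asserted lower bound on $\mu(\Gamma)|X|$ does not follow; for $X=\{1,N\}$ (which has polynomial growth of order~$2$) your route via $T=\{1\}$ gives only $\mu(\Gamma)|X|\sim\delta/N\to 0$, whereas in fact $\mu(\Gamma)\sim\delta$ here.

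The paper avoids this by \emph{not} taking the $L^\infty$-annihilator.  Instead it pigeonholes to find $l=O(d\log d)$ with $|X+lX|=O(|lX|)$, sets $\Gamma=\LSpec((l+1)X,\epsilon)$ (an $L^2$-type large spectrum), and proves the two conclusions by separate arguments: a Fourier moment computation (Proposition~\ref{prop.upper}) bounds $|\Bohr(\Gamma,1/2)|$ from above using polynomial growth of $(l+1)X$, and a Green--Ruzsa argument (Proposition~\ref{prop.lowerbound}) uses the small ratio $|X+lX|/|lX|$ to show $X-X\subset\Bohr(\Gamma,O(\epsilon))$.  The point is that with the $L^2$ definition neither conclusion is automatic, but both become accessible; your $L^\infty$ choice makes the first conclusion trivial at the cost of making the measure lower bound inaccessible by the combinatorial route you propose.
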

Now we shall see later that Bohr sets are already convex progressions, and if they satisfy a certain growth condition of the form used in Chang's covering lemma then they turn out to be low-dimensional.  In particular we have shall show the following which combines with the previous result to yield Theorem \ref{thm.rtc}.
\begin{proposition}\label{prop.coset}
Suppose that $\Bohr(\Gamma,\delta)$ is a finite Bohr set and $k \in \N$ is such that
\begin{equation*}
|\Bohr(\Gamma,(3k+1)\delta)|<2^k|\Bohr(\Gamma,\delta)| \text{ for some } \delta < 1/(4(3k+1)).
\end{equation*}
Then $\Bohr(\Gamma,\delta)$ is an (at most) $k$-dimensional centred convex coset progression.
\end{proposition}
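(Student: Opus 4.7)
The strategy is to apply Chang's covering lemma (Lemma \ref{lem.ccl}) after lifting the Bohr set to a lattice picture, which is made rigorous by the smallness of $\delta$.

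First, I would set up the lifting. The characters in $\Gamma$ define a homomorphism $\Phi \colon G \to (\R/\Z)^\Gamma$ with kernel $H := \Gamma^\perp$, under which $\Bohr(\Gamma,n\delta)$ is the preimage of the box $B_{n\delta} := \{\theta : |e^{2\pi i\theta_\gamma}-1| \le n\delta \text{ for all } \gamma \in \Gamma\}$. The smallness condition $\delta < 1/(4(3k+1))$ forces $B_{(3k+1)\delta}$ to sit strictly inside a fundamental domain of the quotient $\R^\Gamma \to (\R/\Z)^\Gamma$, so every $B_{n\delta}$ with $n \le 3k+1$ lifts uniquely to a symmetric convex body $\widetilde{B}_{n\delta} \subset \R^\Gamma$. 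Letting $\Lambda \subset \R^\Gamma$ be the preimage of $\Phi(G)$ (a lattice, thanks to the finiteness of $\Bohr(\Gamma,\delta)$), we get bijections $\Bohr(\Gamma,n\delta)/H \leftrightarrow \Lambda \cap \widetilde{B}_{n\delta}$ for $n \le 3k+1$, and the growth hypothesis becomes $|\Lambda \cap \widetilde{B}_{(3k+1)\delta}| < 2^k|\Lambda \cap \widetilde{B}_\delta|$.

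Next I would apply Chang's covering lemma with $A = S = \Bohr(\Gamma,\delta)$: the hypothesis $|kA+S| < 2^k|S|$ follows from the inclusion $kA+S \subset \Bohr(\Gamma,(k+1)\delta) \subset \Bohr(\Gamma,(3k+1)\delta)$ and the growth bound. This produces $T = \{t_1,\dots,t_m\} \subset \Bohr(\Gamma,\delta)$ with $m < k$. Lifting each $t_i$ to $\tau_i \in \widetilde{B}_\delta$, I would define the homomorphism $\phi \colon \Z^m \to G$, $n \mapsto \sum n_i t_i$, its $\R$-linear extension $\phi_\R \colon \R^m \to \R^\Gamma$, $v \mapsto \sum v_i \tau_i$, and the symmetric convex set $Q := \phi_\R^{-1}(\widetilde{B}_\delta) \subset \R^m$. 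The candidate centred convex coset progression is $\phi(Q \cap \Z^m) + H$; the inclusion $\phi(Q \cap \Z^m) + H \subset \Bohr(\Gamma,\delta)$ is immediate, since for $n \in Q \cap \Z^m$ the vector $\phi_\R(n) \in \widetilde{B}_\delta$ lifts $\Phi(\phi(n))$ and therefore places $\phi(n)$ in the Bohr set.

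The main obstacle is the reverse inclusion $\Bohr(\Gamma,\delta) \subset \phi(Q \cap \Z^m) + H$, together with the boundedness of $Q$; both amount, in the lattice picture, to showing $\Lambda \cap \widetilde{B}_\delta \subset \sum_i \Z\tau_i$, namely that every $H$-coset of $\Bohr(\Gamma,\delta)$ is an \emph{integer} combination of $T$ and that the $\tau_i$ are $\R$-linearly independent. Chang's covering only delivers a $\{-1,0,1\}$-combination of $T$ plus a residue in $\Lambda \cap \widetilde{B}_{2\delta}$, so the residue must be absorbed into $\sum_i \Z\tau_i$. My plan for this is to refine the choice of $T$, passing if necessary to a $\Z$-basis of the sublattice $\langle \Lambda \cap \widetilde{B}_\delta\rangle \subset \Lambda$ generated by the lattice points in $\widetilde{B}_\delta$, and to bound its rank by $k$ via a lattice-geometric counting argument: any $r$ $\R$-linearly independent elements of $\Lambda \cap \widetilde{B}_\delta$ spawn $\Omega((3k+1)^r)$ distinct integer combinations inside $\Lambda \cap \widetilde{B}_{(3k+1)\delta}$, and a careful bookkeeping of $|\Lambda \cap \widetilde{B}_\delta|$ in terms of $r$, fed back into the growth hypothesis, forces $r \le k$. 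The uniqueness of lifts guaranteed by $\delta < 1/(4(3k+1))$ is precisely what allows this counting to take place inside $\R^\Gamma$ with no wrap-around to worry about.
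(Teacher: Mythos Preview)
Your lifting setup is exactly the paper's Ruzsa embedding $R_\Gamma$, and your identification of the main obstacle --- showing that the lattice $\langle \Lambda \cap \widetilde B_\delta\rangle$ has rank at most $k$ --- is on target.  The gap is in the ``lattice-geometric counting argument'' you propose for this rank bound.  The claim that $r$ independent vectors in $\widetilde B_\delta$ produce $\Omega((3k+1)^r)$ distinct integer combinations in $\widetilde B_{(3k+1)\delta}$ is not right: the constraint on an integer vector $a$ is $\sum_i |a_i|\le 3k+1$, and the number of such $a$ is of order $(3k+1)^r/r!$, not $(3k+1)^r$.  More seriously, even with the correct count you only get a lower bound on $|\Lambda\cap\widetilde B_{(3k+1)\delta}|$, whereas the hypothesis compares this with $2^k|\Lambda\cap\widetilde B_\delta|$; since $|\Lambda\cap\widetilde B_\delta|$ can be arbitrarily large relative to $r$, no bound on $r$ follows.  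The ``careful bookkeeping'' you allude to cannot close this: there is no universal upper bound on $|\Lambda\cap\widetilde B_\delta|$ in terms of $r$ alone.

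The paper handles this step differently and rather cleanly.  Instead of applying Chang's covering lemma with $A=S=B_\delta$, it takes $A=3R_\Gamma(B_\delta)$ and $S=R_\Gamma(B_\delta)$ (the hypothesis $|(3k+1)R_\Gamma(B_\delta)|<2^k|R_\Gamma(B_\delta)|$ is exactly what is needed), obtaining $X\subset 3R_\Gamma(B_\delta)$ with $|X|<k$ and
\[
3R_\Gamma(B_\delta)\subset \Span(X)+2R_\Gamma(B_\delta).
\]
The point of the factor $3$ is that this inclusion \emph{iterates}: adding $R_\Gamma(B_\delta)$ and reapplying gives $nR_\Gamma(B_\delta)\subset\langle X\rangle + 2R_\Gamma(B_\delta)$ for every $n$.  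Now for each fixed $v\in R_\Gamma(B_\delta)$ the residues $v_n\in 2R_\Gamma(B_\delta)$ with $nv\in\langle X\rangle+v_n$ lie in a finite set, so by pigeonhole some $(n-m)v\in\langle X\rangle$.  Hence every element of $R_\Gamma(B_\delta)$ has a nonzero integer multiple in $\langle X\rangle$, which forces the rank of $\langle R_\Gamma(B_\delta)\rangle$ to be at most $|X|<k$.  This iteration-plus-pigeonhole is the missing idea; once you have it, your construction of $\phi$ and $Q$ from a $\Z$-basis of the resulting lattice goes through as you describe.
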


\section{Relative polynomial growth and Bohr sets}\label{sec.ccprpg}

In this section we show how to pass from sets with relative polynomial growth to a Bohr set which (effectively) has polynomial growth of relatively low order.  Shortly we shall see that Bohr sets are convex coset progressions (provided the width parameter is sufficiently small), but for now we think of them as a sort of `approximate annihilator'.

To find an appropriate Bohr set we shall need to examine the (very) large spectrum of a finite set $A$, which is defined to be the set
\begin{equation*}
\LSpec(A,\epsilon):=\{\gamma \in \wh{G}: \|1-\gamma\|_{L^2(\mu_A \ast \mu_{-A})} \leq \epsilon\}.
\end{equation*}
(Note immediately that $\LSpec(A,\epsilon)$ is a neighbourhood since $A$ is finite.)  The definition of $\LSpec$ we have given takes the form it does for ease of use of the triangle inequality: if $\gamma \in \LSpec(A,\epsilon)$ and $\gamma' \in \LSpec(A,\epsilon')$ then $\gamma+\gamma' \in \LSpec(A,\epsilon+\epsilon')$ by the triangle inequality:
\begin{eqnarray*}
\|1-\gamma\gamma'\|_{L^2(\mu_A \ast \mu_{-A})} &= &\|(1-\gamma') + (1-\gamma)\gamma'\|_{L^2(\mu_A \ast \mu_{-A})}\\& \leq & \|1-\gamma'\|_{L^2(\mu_A \ast \mu_{-A})} + \|(1-\gamma)\gamma'\|_{L^2(\mu_A \ast \mu_{-A})}\\ &= & \|1-\gamma\|_{L^2(\mu_A \ast \mu_{-A})}+\|1-\gamma'\|_{L^2(\mu_A \ast \mu_{-A})}.
\end{eqnarray*}
On the other hand to connect the definition to the idea that $\LSpec$ should represent the large spectrum we have the following useful identity:
\begin{equation*}
\|1-\gamma\|_{L^2(\mu_A \ast \mu_{-A})}^2=2(1-|\wh{\mu_A}(\gamma)|^2),
\end{equation*}
so that
\begin{equation*}
\|1-\gamma\|_{L^2(\mu_A \ast \mu_{-A})}\leq \epsilon \text{ if and only if } |\wh{\mu_A}(\gamma)| \geq \sqrt{1-\epsilon^2/2}.
\end{equation*}
This fact will be used extensively in the remainder of the section.

We have two key tools for establishing our main proposition (Proposition \ref{prop.polytobohr}).  The first of these uses an approximation developed by Schoen in \cite{sch::0} and imported into this context by Green and Ruzsa in \cite{greruz::0}.
\begin{proposition}\label{prop.upper}
Suppose that $X$ has relative polynomial growth of order $d$.  Then 
\begin{equation*}
|\Bohr(\LSpec(X,\epsilon),1/2)| \leq \exp(O(d\log \epsilon^{-1}d))|X|.
\end{equation*}
\end{proposition}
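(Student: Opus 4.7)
The plan is to use Fourier analysis to show that $B := \Bohr(\LSpec(X,\epsilon),1/2)$ is contained in $nX - nX$ for a suitable $n = O(\epsilon^{-2} d \log(\epsilon^{-1} d))$, after which the estimate on $|B|$ follows by combining relative polynomial growth with Ruzsa's triangle inequality to bound $|nX-nX|\leq n^{O(d)}|X|$ (the same bookkeeping as in the proof of Corollary \ref{cor.ff}).

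Set $\nu := \mu_X \ast \mu_{-X}$ and $\Lambda := \LSpec(X,\epsilon)$. The identity $\|1-\gamma\|_{L^2(\nu)}^2 = 2(1-|\wh{\mu_X}(\gamma)|^2)$ recorded above the proposition tells us that $\gamma \in \Lambda$ precisely when $|\wh{\mu_X}(\gamma)|^2 \geq 1-\epsilon^2/2$, so the spectral gap $|\wh{\mu_X}|^2 < 1-\epsilon^2/2$ holds throughout $\Lambda^c$. Since $\nu^{*n}$ is a nonnegative real measure supported on $n(X-X)=nX-nX$, it is enough to show that $\nu^{*n}(x)>0$ for every $x\in B$.

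Fix such an $x$ and take $n$ even. For $\gamma\in\Lambda$ one has $\Re\gamma(x) = 1-|1-\gamma(x)|^2/2 \geq 7/8$, while trivially $\Re\gamma(x)\geq -1$ on $\wh G$. By Fourier inversion (using that $\nu^{*n}$ is real) and Plancherel,
\begin{equation*}
\nu^{*n}(x) \;=\; \int_{\wh G}|\wh{\mu_X}(\gamma)|^{2n}\Re\gamma(x)\,d\gamma \;\geq\; \tfrac{7}{8}\nu^{*n}(0) \;-\; \frac{2(1-\epsilon^2/2)^{n-1}}{|X|},
\end{equation*}
where we used the spectral gap on $\Lambda^c$ together with $\int|\wh{\mu_X}|^2\,d\gamma=1/|X|$. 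The symmetry of $\nu$ (which is why we take $n$ even) gives $\nu^{*n}(0)=\|\nu^{*(n/2)}\|_{\ell^2}^2 \geq 1/|\supp\nu^{*(n/2)}| \geq 1/|(n/2)X-(n/2)X|$ by Cauchy-Schwarz, and Ruzsa's triangle inequality combined with relative polynomial growth yields $|(n/2)X-(n/2)X| \leq n^{O(d)}|X|$; hence $\nu^{*n}(0) \geq n^{-O(d)}/|X|$.

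Consequently $\nu^{*n}(x)>0$ as soon as $(1-\epsilon^2/2)^{n-1}$ is comfortably smaller than $n^{-O(d)}$, which holds for $n=O(\epsilon^{-2}d\log(\epsilon^{-1}d))$. For such $n$ we conclude $B\subset nX-nX$ and so $|B| \leq n^{O(d)}|X| = \exp(O(d\log(\epsilon^{-1}d)))|X|$, as required. The principal obstacle is the polynomial-in-$n$ control of $|nX-nX|$: a direct appeal to Pl{\"u}nnecke starting from the doubling constant $2^d$ would give only $2^{O(dn)}|X|$, which is ruinous here, so one must exploit the full relative polynomial growth hypothesis via an iterated Ruzsa triangle inequality exactly as in the proof of Corollary \ref{cor.ff}.
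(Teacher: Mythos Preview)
Your argument is correct, and the bookkeeping for $|nX-nX|\le n^{O(d)}|X|$ does go through: from $|X+X|\le 2^d|X|$ one gets $|X-X|\le 2^{2d}|X|$ by Pl{\"u}nnecke--Ruzsa, and then two applications of Ruzsa's triangle inequality give
\[
|nX-nX|\;\le\;\frac{|nX-X|\,|(n+1)X|}{|X|}\;\le\;\frac{|(n+1)X|^2\,|X-X|}{|X|^2}\;\le\;(n+1)^{2d}2^{2d}|X|,
\]
which is $n^{O(d)}|X|$ for $n\ge 2$. (Your appeal to ``iterated Ruzsa triangle as in Corollary~\ref{cor.ff}'' is right in spirit, but note that one does need this single Pl{\"u}nnecke--Ruzsa input on $|X-X|$; Ruzsa's triangle alone does not close.)

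The route, however, differs from the paper's in an interesting way. Both arguments rest on the same Fourier concentration: for $k=O(\epsilon^{-2}d\log(\epsilon^{-1}d))$ the mass $\int|\wh{\mu_X}|^{2k}$ is essentially supported on $\LSpec(X,\epsilon)$. The paper then tests this against $|\wh{\mu_B}|^2$ (which is $\ge 1/4$ on $\LSpec$) and bounds the resulting integral above by $|X|^{-1}\|1_X\ast\mu_B\|_{\ell^\infty}\le 1/|B|$, yielding $|B|\le O(|kX|)$ directly --- no difference sets enter. You instead test against $\gamma(x)$ for each $x\in B$ to prove the \emph{containment} $B\subset nX-nX$, which is a stronger (Bogolyubov-type) conclusion, and then pay for it by having to control $|nX-nX|$. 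The paper's endgame is cleaner for the stated size bound; yours gives extra structural information at the cost of the additional sumset estimate.
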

\begin{proof}
By Plancherel's theorem and the Cauchy-Schwarz inequality we have
\begin{equation}\label{eqn.polynomialgrowthCS}
\int{|\wh{1_X}(\gamma)|^{2k}d\gamma} = \|1_X^{(k)}\|_{\ell^2(G)}^2 \geq\frac{\|1_X^{(k)}\|_{\ell^1(G)}^2}{|\supp 1_X^{(k)}|} = \frac{|X|^{2k}}{|kX|}.
\end{equation}
We shall show that most of this mass is supported on the set of characters where the Fourier transform of $1_X$ is very large.  In particular note that
\begin{eqnarray*}
\int_{ \LSpec(X,\epsilon)^c}{|\wh{1_X}(\gamma)|^{2k}d\gamma} & \leq & (\sqrt{1-\epsilon^2/2}|X|)^{2k-2}\int{|\wh{1_X}|^2d\gamma}\\
& = & (1-\epsilon^2/2)^{k-1}|X|^{2k-1},
\end{eqnarray*}
by Parseval's theorem.

Since $X$ has polynomial growth of order $d$ we have that $|kX| \leq k^d|X|$ for $k \geq 1$, so there is a positive
integer $k$ with $ k=O(\epsilon^{-2}d\log \epsilon^{-1}d)$ and
\begin{equation*}
(1-\epsilon^2/2)^{k-1} \leq 1/2k^d \leq |X|/2|kX|,
\end{equation*}
whence 
\begin{equation*}
\int_{\LSpec(X,\epsilon)^c}{|\wh{1_{X}}(\gamma)|^{2k}d\gamma} \leq \frac{|X|^{2k}}{2|kX|}.
\end{equation*}
Thus, by (\ref{eqn.polynomialgrowthCS}) we have
\begin{equation*}
\int_{\LSpec(X,\epsilon)}{|\wh{1_X}(\gamma)|^{2k}d\gamma} \geq \frac{|X|^{2k}}{2|kX|}.
\end{equation*}

Now, let $B$ be a finite subset of $\Bohr(\LSpec(X,\epsilon),1/2)$.  Integrating we get that $|1-\wh{\mu_B}(\gamma)| \leq 1/2$ for any $\gamma \in \LSpec(X,\epsilon)$ and it follows by the triangle inequality that $|\wh{\mu_B}(\gamma)| \geq 1/2$.  Consequently
\begin{equation*}
\int{|\wh{1_{X}}(\gamma)|^{2k}|\wh{\mu_B}(\gamma)|^2d\gamma} \geq 2^{-2}\int_{\LSpec(X,\epsilon)}{|\wh{1_{X}}(\gamma)|^{2k}d\gamma}\geq \frac{|X|^{2k}}{2^3|kX|}.
\end{equation*}
On the other hand
\begin{eqnarray*}
\int{|\wh{1_{X}}(\gamma)|^{2k}|\wh{\mu_B}(\gamma)|^2d\gamma} & \leq & |X|^{2k-2}\|1_X \ast \mu_B\|_{\ell^2(G)}^2\\
& \leq & |X|^{2k-2}\|1_X \ast \mu_B\|_{\ell^1(G)} \|1_X \ast \mu_B\|_{\ell^\infty(G)}
\end{eqnarray*}
by the Hausdorff-Young inequality, Parseval's theorem and then H\"{o}lder's inequality. Since $\|1_{X}\ast \mu_B\|_{\ell^1(G)}=|X|$ we conclude that
\begin{equation*}
\frac{|X|}{2^3|kX|} \leq \|1_X \ast \mu_B\|_{\ell^\infty(G)} \leq \frac{|X|}{|B|}.
\end{equation*}
This gives the desired upper bound, but on $B$ rather than $\Bohr(\LSpec(X,\epsilon),1/2)$.  The result follows since $B$ was an arbitrary finite subset of $\Bohr(\LSpec(X,\epsilon),1/2)$.
\end{proof}
Our second key tool is yet another of the developments of Green and Ruzsa from \cite{greruz::0}.  It is only slightly more general than
\cite[Proposition 4.39]{taovu::}.
\begin{proposition}\label{prop.lowerbound}
Suppose that $|X+S| \leq K|S|$ and $\epsilon \in (0,1]$ is a parameter. Then
\begin{equation*}
X-X \subset \Bohr(\LSpec(X+S,\epsilon),O(\epsilon\sqrt{K})).
\end{equation*}
\end{proposition}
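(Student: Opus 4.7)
The plan is to exploit the fact that the spectral hypothesis $\gamma \in \LSpec(X+S,\epsilon)$ forces $\gamma$ to be nearly constant on $X+S$ in an $L^2$-averaged sense, and then to turn this into pointwise control of $|1-\gamma(a-a')|$ for $a,a' \in X$ via the trivial inclusions $a+S,\, a'+S \subset X+S$.

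Set $c:=\wh{\mu_{X+S}}(\gamma)$, so that by hypothesis $|c|^2 \geq 1-\epsilon^2/2$. Expanding $|c-\overline{\gamma(z)}|^2$ and using the identities $\sum_{z \in X+S}\overline{\gamma(z)} = |X+S|c$ and $\sum_{z \in X+S}\gamma(z) = |X+S|\overline{c}$ yields the identity
\begin{equation*}
\sum_{z \in X+S}|c - \overline{\gamma(z)}|^2 \;=\; |X+S|\bigl(1-|c|^2\bigr) \;\leq\; \tfrac{1}{2}\epsilon^2|X+S|,
\end{equation*}
which is the quantitative expression of $\overline{\gamma}$ being close to the single value $c$ throughout $X+S$.

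Now fix $x=a-a'$ with $a,a' \in X$. Since $a+S \subset X+S$ and $a'+S \subset X+S$, restricting the above sum to each translate gives
\begin{equation*}
\sum_{s \in S}|c-\overline{\gamma(a+s)}|^2 \;\leq\; \tfrac{1}{2}\epsilon^2|X+S| \quad \text{and} \quad \sum_{s \in S}|c-\overline{\gamma(a'+s)}|^2 \;\leq\; \tfrac{1}{2}\epsilon^2|X+S|.
\end{equation*}
Because $|\overline{\gamma(a+s)}-\overline{\gamma(a'+s)}|=|1-\gamma(a-a')|=|1-\gamma(x)|$ for every $s \in S$, the elementary inequality $|u-v|^2 \leq 2|u-c|^2+2|v-c|^2$ summed over $s \in S$ yields
\begin{equation*}
|S|\,|1-\gamma(x)|^2 \;\leq\; 2\epsilon^2|X+S| \;\leq\; 2K\epsilon^2|S|,
\end{equation*}
so $|1-\gamma(x)| \leq \epsilon\sqrt{2K}$, which is exactly $x \in \Bohr(\LSpec(X+S,\epsilon),O(\epsilon\sqrt{K}))$.

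There is essentially no obstacle: once one identifies $c=\wh{\mu_{X+S}}(\gamma)$ as the ``common value'' of $\overline{\gamma}$ on $X+S$, the whole argument collapses to a single triangle-inequality manoeuvre. The conceptual content is that the spectral hypothesis gives global approximate constancy of $\gamma$ on $X+S$, while the sumset bound $|X+S| \leq K|S|$ guarantees that each $a+S$ occupies at least a $1/K$ fraction of $X+S$ on which $\gamma$ must therefore also be approximately constant; the factor $\sqrt{K}$ in the width is precisely the $L^2$ cost of this restriction.
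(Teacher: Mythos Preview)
Your proof is correct and follows essentially the same approach as the paper's: both arguments observe that the spectral hypothesis forces $\gamma$ to be nearly constant on $X+S$ in $L^2$, restrict this to the translates $a+S$ and $a'+S$, and finish with the parallelogram-type inequality $|u-v|^2 \leq 2|u-c|^2+2|v-c|^2$. The only cosmetic difference is that the paper introduces a unimodular phase $\omega$ so that the reference constant becomes $1$ (working with $|1-\omega\gamma(z)|^2$), whereas you keep $c=\wh{\mu_{X+S}}(\gamma)$ itself as the reference value; your variance identity $\sum_{z \in X+S}|c-\overline{\gamma(z)}|^2=|X+S|(1-|c|^2)$ is arguably a slightly cleaner packaging and yields the explicit constant $\sqrt{2}$ directly.
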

\begin{proof}
Write $\delta = 1-\sqrt{1-\epsilon^2/2}$ and suppose that $\gamma \in \LSpec(X+S,\epsilon)$. Then there is a phase $\omega \in
S^1$ such that
\begin{equation*}
\sum_{x \in G}{1_{X+S}(x)\omega\gamma(x)} = \omega\wh{1_{X+S}}(\gamma) = |\wh{1_{X+S}}(\gamma)|.
\end{equation*}
Since the right hand side is real we conclude that
\begin{equation*}
\sum_{x \in G}{1_{X+S}(x)\Re \omega\gamma(x)} = \Re \sum_{x \in G}{1_{X+S}(x)\omega\gamma(x)} = |\wh{1_{X+S}}(\gamma)| \geq (1-\delta)|X+S|.
\end{equation*}
It follows that
\begin{equation*}
\sum_{x \in G}{1_{X+S}(x)|1-\omega\gamma(x)|^2} =2\sum_{x \in G}{1_{X+S}(x)(1-\Re \omega\gamma(x))} \leq 2\delta |X+S|.
\end{equation*}
If $y_0,y_1 \in X$ then
\begin{equation*}
\sum_{x \in G}{1_{S}(x)|1-\omega\gamma(y_i)\gamma(x)|^2} \leq \sum_{x \in G}{1_{X+S}(x)|1-\omega\gamma(x)|^2} \leq 2\delta|X+S|.
\end{equation*}
The $2$-variable Cauchy-Schwarz inequality applied to $1-\omega\gamma(y_0)\gamma(x)$ and $1-\omega\gamma(y_1)\gamma(x)$ tells us that
\begin{eqnarray*}
|1-\gamma(y_0-y_1)|^2&=&|(1-\omega\gamma(y_0)\gamma(x)) - (1-\omega\gamma(y_1)\gamma(x))|^2\\ & \leq & 2(|1-\omega\gamma(y_0)\gamma(x)|^2 +|1-\omega\gamma(y_1)\gamma(x)|^2)
\end{eqnarray*}
for all $x \in G$ since $|\omega|=1$ and $|\gamma(x)|=1$, whence
\begin{equation*}
|S||1-\gamma(y_0-y_1)|^2 = \sum_{x \in G}{1_{S}(x)|1-\gamma(y_0-y_1)|^2} \leq 2^3\delta|X+S|.
\end{equation*}
The result follows since $\delta = O(\epsilon^2)$.
\end{proof}
With these two results we are in a position to prove the main result of this section.
\begin{proposition*}[Proposition \ref{prop.polytobohr}]
Suppose that $X$ has relative polynomial growth of order $d$.  Then there is a neighbourhood of characters $\Gamma$ and a natural number $k=O(d\log^2 d)$ such that
\begin{equation*}
X-X \subset \Bohr(\Gamma,1/(4(3k+1))) \text{ and }|\Bohr(\Gamma,1/2)| < 2^k|X|.
\end{equation*}
\end{proposition*}
\begin{proof}
Since $X$ has relative polynomial growth of order $d$ we may apply the pigeon-hole principle to pick $l = O(d \log d)$ such that $|X+ lX| = O(|lX|)$.  Let $\epsilon$ be a parameter to be optimised later.  By Proposition \ref{prop.upper} applied to the set $(l+1)X$ which has relative polynomial growth of order $O(d\log d)$ we see that for $\Gamma:=\LSpec(X+lX,\epsilon)$ (which is closed) we have
\begin{equation*}
|\Bohr(\Gamma,1/2)| \leq \exp(O(d\log^2 \epsilon^{-1}d))|X|.
\end{equation*}
On the other hand, by Proposition \ref{prop.lowerbound} applied to the sets $X$ and $lX$ we see that
\begin{equation*}
\Bohr(\Gamma,O(\epsilon)) \supset X-X.
\end{equation*}
We now pick $k=\Omega(\epsilon^{-1})$ such that the width parameter above is at most $1/(4(3k+1))$ and the size bound is less than $2^k$.  This is possible with $\epsilon = \Omega(1/(d\log^2 d))$.  The result is proved.
\end{proof}

\section{Ruzsa's embedding and convex coset progressions}\label{sec.lcp}

In the paper \cite{ruz::9} Ruzsa developed an important embedding for relating Bohr sets and convex coset progressions.  Given a set $\Gamma$ of characters on $G$, write $B(\Gamma,\R)$ for the vector space of bounded real-valued functions on $\Gamma$.  Now, we define the map
\begin{eqnarray*}
R_\Gamma:G& \rightarrow & B(\Gamma,\R)\\ x & \mapsto  & R_\Gamma(x):\Gamma \rightarrow \R; \gamma \mapsto \frac{1}{2\pi i}\log\gamma(x),
\end{eqnarray*}
where the logarithm takes its principal value.  (Since $|\gamma(x)| =1$ this means that the logarithm lies in $(-\pi i,\pi i]$ and so the functions are bounded.)

The map $R_\Gamma$ preserves inverses provided $\|R_\Gamma(x)\|_\infty < 1/2$, meaning that $R_\Gamma(-x)=-R_\Gamma(x)$; and furthermore we see that if
\begin{equation*}
\|R_\Gamma(x_1)\|_{\infty} + \dots + \|R_\Gamma(x_d)\|_{\infty} <1/2
\end{equation*}
then
\begin{equation*}
R_\Gamma(x_1+\dots + x_d)=R_\Gamma(x_1)+\dots+R_\Gamma(x_d).
\end{equation*}
This essentially encodes the idea that $R_\Gamma$ behaves like a Fre{\u\i}man morphism\footnote{We direct the unfamiliar reader to 
\cite[Chapter 5.3]{taovu::}.}, although we shall not formalise this notion here. We use this embedding to establish the following proposition.
\begin{proposition*}[Proposition \ref{prop.coset}]
Suppose that $\Bohr(\Gamma,\delta)$ is a finite Bohr set and $d \in \N$ is such that
\begin{equation*}
|\Bohr(\Gamma,(3d+1)\delta)|<2^d|\Bohr(\Gamma,\delta)| \text{ for some } \delta < 1/(4(3d+1)).
\end{equation*}
Then $\Bohr(\Gamma,\delta)$ is an (at most) $d$-dimensional centred convex coset progression.
\end{proposition*}
\begin{proof}
We shall prove that if $L:=\bigcap{\{\ker \gamma: \gamma \in \Gamma\}}$ is trivial then $\Bohr(\Gamma,\delta)$ is a $d$-dimensional centred convex progression.  The result then follows from this by quotienting out by $L$ (which does not impact the hypotheses of the proposition) to get a homomorphism $\phi:\Z^d \rightarrow G/L$ and a symmetric convex body $Q\subset \R^d$ such that $\Bohr(\Gamma,\delta)/L=\phi(Q\cap \Z^d)$.

Let $e_1,\dots,e_d$ be the standard set of generators for $\Z^d$ and for each $i \in \{1,\dots, d\}$ let $h_i \in G$ be a representative of $\phi(e_i)$.  Since $\Z^d$ is free define $\tilde{\phi}:\Z^d \rightarrow G$ by extension from its value at the generators $\tilde{\phi}(e_i):=h_i$ and note that
\begin{equation*}
\Bohr(\Gamma,\delta) =\bigcup{\Bohr(\Gamma,\delta)/L}=\bigcup{\phi(Q \cap \Z^d)} = \tilde{\phi}(Q \cap \Z^d)+L;
\end{equation*}
The result follows.

For notational convenience we write $B_\eta:=\Bohr(\Gamma,\eta)$ for any $\eta\in (0,2]$.  To start with note that if $x \in B_\eta$ then
\begin{equation*}
\|R_\Gamma(x)\|_{\infty} \leq \frac{1}{2\pi} \arccos (1-\eta^2/2) \leq 2\eta.
\end{equation*}
Since $2(3d+1)\delta < 1/2$ we have that if $x_1,\dots,x_{3d+1} \in B_\delta$ then
\begin{equation}\label{eqn.vfr}
R_\Gamma(x_1+\dots+x_{3d+1}) = R_\Gamma(x_1)+\dots + R_\Gamma(x_{3d+1}).
\end{equation}
By hypothesis we then have that
\begin{equation*}
|(3d+1)R_\Gamma(B_\delta)|  = |R_\Gamma((3d+1)B_\delta)| \leq |(3d+1)B_\delta|\leq |B_{(3d+1)\delta}| < 2^d|B_\delta|.
\end{equation*}
Now $|B_\delta| = |R_\Gamma(B_\delta)|$ since $R_\Gamma$ is injective on $B_\delta$.  To see this note that if $x,y \in B_\delta$ have $R_\Gamma(x)=R_\Gamma(y)$ then $R_\Gamma(x-y)=0$ by (\ref{eqn.vfr}) and the fact that $R_\Gamma$ preserves inverses on $B_\delta$.  It then follows that $\gamma(x-y)=1$ for all $\gamma \in \Gamma$, and since $L$ is trivial we conclude that $x=y$.

In light of all this we have that $|3dR_\Gamma(B_\delta) + R_\Gamma(B_\delta)| <2^d|R_\Gamma(B_\delta)|$, and so by the variant of Chang's covering lemma in Lemma \ref{lem.ccl} applied to the sets $3R_\Gamma(B_\delta)$ and $R_\Gamma(B_\delta)$ (both of which are symmetric neighbourhoods since $R_\Gamma$ preserves inverses and the identity, and $B_\delta$ is symmetric) we get a set $X\subset 3R_\Gamma(B_\delta)$ with $|X| < d$ such that
\begin{equation*}
3R_\Gamma(B_\delta) \subset \Span(X) + 2R_\Gamma(B_\delta)\subset \langle X\rangle + 2R_\Gamma(B_\delta).
\end{equation*}
Here, of course, $\langle X \rangle$ denotes the group generated by $X$.  It follows that for all $n \in \N$ we have
\begin{equation*}
(n+2)R_\Gamma(B_\delta)\subset \langle X\rangle + 2R_\Gamma(B_\delta).
\end{equation*}
Now, for each $v \in R_\Gamma(B_\delta)$ and $n \in \N$ there is some $v_n \in 2R_\Gamma(B_\delta)$ such that $nv \in \langle X \rangle + v_n$.  However, since $2R_\Gamma(B_\delta)$ is finite it follows that there are distinct natural numbers $n \neq m$ such that $v_n = v_m$ whence
\begin{equation*}
(n-m)v = nv-mv \in (\langle X \rangle + v_n) - (\langle X \rangle + v_m) = \langle X \rangle.
\end{equation*}
Thus to every $v \in R_\Gamma(B_\delta)$ there is some natural number $l_v$ such that $l_vv \in \langle X \rangle$.  Let $L$ be the lowest common multiple of all the natural numbers $(l_v)_{ v \in R_\Gamma(B_\delta)}$ so that $Lv \in \langle X \rangle$ for all $v \in R_\Gamma(B_\delta)$.  It follows that $v \in \langle x/L:x \in X\rangle$ and so $R_\Gamma(B_\delta)$ generates a lattice $\Lambda$ in $B(\Gamma,\R)$ of dimension $k \leq |X| < d$.

Let $v_1,\dots,v_k$ be a basis for $\Lambda$ and for each $j \in \{1,\dots,k\}$ write $v_j = \sum_{x \in B_\delta}{z_{j,x}R_\Gamma(x)}$ for some integers $(z_{j,x})_{x \in B_\delta}$.  We now put $h_j:= \sum_{x \in B_\delta}{z_{j,x}x}$ and define a homomorphism
\begin{equation*}
\phi:\Z^k \rightarrow G; (n_1,\dots,n_k) \mapsto n_1h_1+\dots+n_kh_k.
\end{equation*}
Finally write $V$ for the subspace of $B(\Gamma,\R)$ generated by $X$ and $\psi:V\rightarrow \R^k$ for the change of basis taking $v_i$ to the canonical basis vector $e_i$ of $\R^k$, and let $Q$ be the cube in $B(\Gamma,\R)$ centred at the origin and with side length $2\delta$.  The set $\psi(Q\cap V)$ is a symmetric convex body in $\R^k$ and it remains to check that $\phi(\psi(Q\cap V) \cap \Z^k)=B_\delta$.

If $x_0 \in B_\delta$ then $R_\Gamma(x_0) \in \Lambda$ and $R_\Gamma(x_0) \in Q$ and so
\begin{equation*}
R_\Gamma(x_0)=n_1v_1+\dots+n_kv_k\text{ for some } n \in \psi(Q \cap V) \cap \Z^k.
\end{equation*}
Given the definition of the $v_i$s we have that
\begin{equation*}
R_\Gamma(x_0) = \sum_{j=1}^k{n_j\sum_{x \in B_\delta}{z_{j,x}R_\Gamma(x)}}.
\end{equation*}
Exponentiating this point-wise (via $x \mapsto \exp(2\pi i x)$ which is a homomorphism from $B(\Gamma,\R) \rightarrow B(\Gamma,S^1)$) tells us that
\begin{equation*}
\gamma(x_0) = \prod_{j=1}^k{\left(\prod_{x \in B_\delta}{\gamma(x)^{z_{j,x}}}\right)^{n_j}} = \gamma(\sum_{j=1}^k{n_j\sum_{x \in B_\delta}{z_{j,x}x}}) \text{ for all } \gamma \in \Gamma.
\end{equation*}
Since $L$ is trivial we conclude that
\begin{equation*}
x_0=\sum_{j=1}^k{n_j\sum_{x \in B_\delta}{z_{j,x}x}}= n_1h_1+\dots+n_kh_k.
\end{equation*}
It follows that $\phi(n) = x_0$, and so $x_0 \in \phi(\psi(Q\cap V) \cap \Z^k)$.

In the other direction suppose that $x_0 \in \phi(\psi(Q\cap V) \cap \Z^k)$ and $v_0 \in Q \cap \Lambda$ is such that $x_0=\phi(\psi(v_0))$.  Then $v_0 \in \Lambda$ and so
\begin{equation*}
v_0=n_1v_1+\dots+n_kv_k\text{ for some } n \in \Z^k,
\end{equation*}
and so
\begin{equation*}
v_0=\sum_{j=1}^k{n_j\sum_{x \in B_\delta}{z_{j,x}R_\Gamma(x)}}.
\end{equation*}
We exponentiate point-wise as before to get that
\begin{equation*}
\exp(2\pi i v_0)= \prod_{j=1}^k{\left(\prod_{x \in B_\delta}{\gamma(x)^{z_{j,x}}}\right)^{n_j}} = \gamma(\sum_{j=1}^k{n_j\sum_{x \in B_\delta}{z_{j,x}x}}) \text{ for all } \gamma \in \Gamma.
\end{equation*}
But $v_0 \in Q$ and so $|1-\exp(2\pi i v_0)| \leq \delta$ for all $\gamma \in \Gamma$ and hence
\begin{equation*}
x_0=\phi(n) = \sum_{j=1}^k{n_jh_j} = \sum_{j=1}^k{n_j\sum_{x \in B_\delta}{z_{j,x}x}} \in B_\delta
\end{equation*}
as required.  The result is proved.
\end{proof}
In light of the start of the proof here it might be more natural to define a centred convex coset progression to be a set of the form $\bigcup{\phi(Q \cap \Z^d)}$ where $\phi:\Z^d \rightarrow G/H$ is a homomorphism, $H \leq G$ and $Q$ is a symmetry convex body in $\R^d$.  This sort of consideration becomes more relevant as one moves to the non-Abelian setting but this is not our concern here.

\section{Concluding remarks}\label{sec.con}

First we should note that Theorem \ref{thm.finalfre} follows immediately from combining Proposition \ref{prop.grow2} and Theorem \ref{thm.rtc}, and all the $\log^{o(1)}K$ terms can be replaced by $(\log \log K)^{O(1)}$ terms for those interested.

It may be worth noting that there are really three different functions in Theorem \ref{thm.gr}; we really show the following.
\begin{theorem}\label{thm.diff}
Suppose that $A \subset G$ has $|A+A| \leq K|A|$.  Then $A$ is $\exp(h(K))$-covered by a $d(K)$-dimensional centred convex coset progression $M$ of size at most $\exp(f(K))|A|$.
\end{theorem}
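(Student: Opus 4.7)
The plan is to assemble Theorem \ref{thm.diff} from Proposition \ref{prop.grow2} (the small-doubling to polynomial-growth step) and Theorem \ref{thm.rtc} (the polynomial-growth to convex coset progression step), being careful to keep the three functions $h$, $d$, $f$ formally separated rather than collapsing them into a single quantity.

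First I would apply Proposition \ref{prop.grow2} to $A$ to obtain a symmetric neighbourhood of the identity $X$ with $|X| \leq \exp(O(\log^{1+o(1)} K))|A|$, with $A$ being $\exp(O(\log^{3+o(1)} K))$-covered by $X$, and with $X$ of relative polynomial growth of order $d_0 = O(\log^{3+o(1)} K)$. Next I would feed this $X$ into Theorem \ref{thm.rtc}, producing a centred convex coset progression $M$ with $X - X \subset M$, $\dim M = O(d_0 \log^2 d_0)$, and $|M| \leq \exp(O(d_0 \log^2 d_0))|X|$. Since $\log d_0 = O(\log \log K)$ is absorbed into the $\log^{o(1)} K$ factor, both $\dim M$ and the size blowup from Theorem \ref{thm.rtc} collapse to $O(\log^{3+o(1)} K)$. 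Because $X$ is a symmetric neighbourhood of the identity we have $0_G \in X$, hence $X \subset X - X \subset M$, so the covering of $A$ by $X$ passes directly to a covering of $A$ by $M$; and combining the two size bounds yields $|M| \leq \exp(O(\log^{3+o(1)} K))|A|$. This gives the conclusion with
\begin{equation*}
h(K) = O(\log^{3+o(1)} K), \quad d(K) = O(\log^{3+o(1)} K), \quad f(K) = O(\log^{3+o(1)} K).
\end{equation*}

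The point of recording the theorem in this three-function form is that these quantities come from distinct combinatorial inputs and could in principle be improved independently: $h(K)$ is the covering-number output of Proposition \ref{prop.grow2}; $d(K)$ is the dimension output of Theorem \ref{thm.rtc}, carrying an internal $\log^2 d$ loss coming from the Chang-covering step inside the Ruzsa embedding argument of \S\ref{sec.lcp}; and $f(K)$ is the product of the size blowups from both ingredients. There is no genuine obstacle in the proof of Theorem \ref{thm.diff} itself — all the work is already concealed inside Proposition \ref{prop.grow2} and Theorem \ref{thm.rtc} — so the only care needed is bookkeeping: one must avoid prematurely identifying the three parameters, so that a future sharpening of either ingredient (for instance, an improved size bound in Theorem \ref{thm.rtc} or a smaller covering number in Proposition \ref{prop.grow2}) propagates cleanly to precisely the function it controls.
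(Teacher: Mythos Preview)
Your proof is correct and is exactly the approach the paper takes: Theorem \ref{thm.diff} is presented there not with a separate proof but as a reformulation of Theorem \ref{thm.finalfre}, which the paper says ``follows immediately from combining Proposition \ref{prop.grow2} and Theorem \ref{thm.rtc}'' --- precisely your two-step assembly. One minor attribution slip in your commentary (not affecting the proof): the $\log^2 d$ loss in the dimension bound of Theorem \ref{thm.rtc} arises in Proposition \ref{prop.polytobohr} of \S\ref{sec.ccprpg} (the spectral pigeonholing and choice of $\epsilon$), not from the Chang-covering step in \S\ref{sec.lcp}, which contributes no extra logarithmic factors.
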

The quantities $h(K)$, $d(K)$ and $f(K)$ can be traded off between each other to some extent but there is an associated cost.  The precise relationships are a little ad-hoc because they reflect different combinations of our three main examples.  Let us recall these now:
\begin{enumerate}
\item \emph{(Cosets of subgroups)} Suppose that $H$ is a finite subgroup of $G$ and $X$ is an $H$-separated set of $2K+O(1)$ points.  Then letting $A:=X+H$ we have $|A+A| \sim K|A|$.
\item \emph{(Convex progressions)} Suppose that $M$ is a $d$-dimensional convex coset progression.  Then we have seen that $|M+M| \leq \exp(O(d))|M|$.  On the other hand if $A$ is a cube in $\Z^d$ (so that all he side lengths are the same) then in fact $|A+A| \sim 2^d|A|$ so that the doubling of $A$ really is this large.
\item \emph{(Subsets of subgroups)} Suppose that $H$ is a finite subgroup of $G$ and $A$ is a randomly chosen subset of $H$, taking $x \in H$ with probability $1/K$.  Then with high probability $|A| \sim |H|/K$ and $|A+A| \sim |H|$ so that $|A+A| \sim K|A|$. 
\end{enumerate}
Each of these suggests a lower bound on (respectively) $h(K)$, $d(K)$ and $f(K)$, but they do not all give such bounds and there is no one example which forces lower bounds on all of them simultaneously.  This is because of the previously mentioned ability to trade which we shall now explain in a little more depth.  We assume that we are given Theorem \ref{thm.diff} with some functions $h(K),d(K)$ and $f(K)$.
\subsection{Reducing $h(K)$ in exchange for $d(K)$}  One can eliminate $h(K)$ entirely and replace `$\exp(h(K))$-covered by' in Theorem \ref{thm.diff} by `contained in' at the expense of replacing $d(K)$ by $d(K)+\exp(h(K))$, and $f(K)$ by $2f(K)$.  This is a little fiddly, but not difficult to do.

Removing the dependence on covering number is the additional requirement which is made in traditional statements of Fre{\u\i}man-type theorems; indeed, Green and Ruzsa in \cite{greruz::0} actually proved the following.
\begin{theorem}[Green-Ruzsa theorem, original version]\label{thm.grorig}
Suppose that $|A+A| \leq K|A|$.  Then $A$ is  contained in a $K^{4+o(1)}$-dimensional centred convex coset progression $M$ of size at most $\exp(K^{4+o(1)})|A|$.
\end{theorem}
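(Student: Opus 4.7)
The plan is to derive Theorem~\ref{thm.grorig} by combining Proposition~\ref{prop.usefulfirst} with Theorem~\ref{thm.rtc}, followed by a single trivial dimension-bump to upgrade ``contains $A-A$'' to ``contains $A$''. This mirrors the decoupling described in Section~\ref{sec.overview}, but uses only the crude combinatorial bound of Proposition~\ref{prop.usefulfirst} in the first half, which is all that was available to Green and Ruzsa originally; the improvements of Schoen, Croot-Sisask and Konyagin are not used here, so the bounds degrade accordingly.

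First I would invoke Proposition~\ref{prop.usefulfirst} to deduce that $A$ has relative polynomial growth of order $O(K^4)$. Feeding this into Theorem~\ref{thm.rtc} with $d=O(K^4)$ produces a centred convex coset progression $M=\phi(Q\cap\Z^d)+H$ with $A-A\subset M$, dimension $O(d\log^2 d)=O(K^4\log^2 K)=K^{4+o(1)}$, and size at most $\exp(O(d\log^2 d))|A|=\exp(K^{4+o(1)})|A|$.

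To pass from $A-A\subset M$ to containment of $A$ itself, I would fix any $a_0\in A$ and note $A\subset a_0+(A-A)\subset a_0+M$. To wrap the translate $a_0+M$ inside a centred convex coset progression I extend $\phi$ to a homomorphism $\tilde\phi:\Z^{d+1}\to G$ by setting $\tilde\phi(e_{d+1}):=a_0$, and take $\tilde Q:=Q\times[-1,1]\subset\R^{d+1}$, which is still a symmetric convex body. Then $\tilde M:=\tilde\phi(\tilde Q\cap\Z^{d+1})+H$ is a $(d+1)$-dimensional centred convex coset progression, it contains $\phi(Q\cap\Z^d)+a_0+H=a_0+M\supset A$, and $|\tilde M|\leq 3|M|=\exp(K^{4+o(1)})|A|$; the dimension is $d+1=K^{4+o(1)}$ as required.

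There is no real obstacle here: both serious ingredients, Proposition~\ref{prop.usefulfirst} and Theorem~\ref{thm.rtc}, are already in hand, and the rest is exactly the sort of bookkeeping flagged as ``a little fiddly, but not difficult to do'' in the discussion preceding the theorem. If anything, the main point to get right is the dimension-bump in the final step, where one has to use $\tilde Q=Q\times[-1,1]$ (symmetric, so the progression stays centred) rather than $Q\times\{1\}$ or a naive translate, so that the resulting object really is a centred convex coset progression in the sense defined in the introduction.
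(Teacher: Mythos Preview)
Your proposal is correct. The paper does not give its own proof of Theorem~\ref{thm.grorig}: it is stated only as the historical form of the Green--Ruzsa result and is attributed directly to \cite{greruz::0}. The surrounding text merely sketches a generic covering-for-dimension trade (``replace $d(K)$ by $d(K)+\exp(h(K))$''), without carrying it out.

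Your route is in fact slightly cleaner than that sketch. Rather than first passing through the covered version (Theorem~\ref{thm.gr} with $h(K)\neq 0$) and then trading the covering number into the dimension, you go directly: Proposition~\ref{prop.usefulfirst} gives relative polynomial growth of order $O(K^4)$ for $A$ itself, Theorem~\ref{thm.rtc} already outputs \emph{containment} of $A-A$ (not merely a cover), and a single dimension bump absorbs the translate by $a_0$. This avoids the potentially wasteful $\exp(h(K))$ penalty in the paper's generic trade, though here both approaches land at $K^{4+o(1)}$ anyway. The one point to be careful with---that $\tilde Q=Q\times[-1,1]$ keeps the progression centred---you have handled correctly.
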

This has been slightly improved, with the power of $4+o(1)$ being replaced by $1+o(1)$ but the reason we do not use this formulation is that the dimension bound must be at least $\Omega(K)$ -- exponentially worse than in the Polynomial Fre{\u\i}man-Ruzsa conjecture.  This is, of course, suggested by the fact that reducing the covering number has a cost of $\exp(h(K))$ rather than $h(K)$ associated with it. 

To see the difficulty directly suppose that $A$ is a set of $2K+O(1)$  generators of a torsion-free group.  Then $|A+A| \sim K|A|$, but any convex coset progression containing $A$ has dimension at least $2K-O(1)$.  

\subsection{Reducing $d(K)$ in exchange for $f(K)$} In general one cannot trade all of the dimension in for size, but one can if the group has bounded exponent (meaning every element has order bounded by an absolute constant).  Then one may reduce $d(K)$ to $0$ at the expense of replacing $f(K)$ by $\exp(f(K)+O(d(K)))$.  In Theorem \ref{thm.finalfre} this gives the following result.
\begin{theorem}\label{thm.frebound}
Suppose that $G$ is a group of bounded exponent and $A\subset G$ has $|A+A| \leq K|A|$.  Then $A$ is $\exp(O(\log^{3+o(1)} K))$-covered by a subgroup $M$ of size at most $\exp(O(\log^{3+o(1)} K))|A|$.
\end{theorem}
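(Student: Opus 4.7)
The plan is to reduce Theorem \ref{thm.frebound} to Theorem \ref{thm.finalfre} by exploiting the bounded exponent assumption to absorb the dimension of a convex coset progression into a subgroup. First I would apply Theorem \ref{thm.finalfre} to $A$, which produces a centred convex coset progression $M$ with $d := \dim M = O(\log^{3+o(1)} K)$, with $|M| \leq \exp(O(\log^{3+o(1)} K))|A|$, together with a set $X$ of size $|X| \leq \exp(O(\log^{3+o(1)} K))$ such that $A \subset X + M$.

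Next I would unpack the definition of $M$: write $M = P + H_0$ where $P = \phi(\Z^d \cap Q)$ for some symmetric convex body $Q \subset \R^d$, some homomorphism $\phi: \Z^d \to G$, and some subgroup $H_0 \leq G$. Let $r$ be the exponent of $G$, which is $O(1)$ by hypothesis. Then $r\phi(e_i) = 0_G$ for each standard generator $e_i$ of $\Z^d$, so $\phi$ vanishes on $r\Z^d$ and factors through $(\Z/r\Z)^d$, giving $|\phi(\Z^d)| \leq r^d = \exp(O(d))$. Thus, putting $H := \phi(\Z^d) + H_0$, the set $H$ is a subgroup of $G$ (as a sum of two subgroups of an Abelian group) and $M \subset H$.

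To conclude I would bound $|H|$. Since $Q$ is a symmetric convex body it contains the origin, so $0_G \in P$ and hence $H_0 \subset M$. Therefore
\begin{equation*}
|H| \leq |\phi(\Z^d)|\cdot |H_0| \leq r^d|M| \leq \exp(O(\log^{3+o(1)} K))|A|.
\end{equation*}
Moreover, $A \subset X + M \subset X + H$ exhibits $A$ as $\exp(O(\log^{3+o(1)} K))$-covered by the subgroup $H$, which is what is required.

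The only substantive step here is the identification $|\phi(\Z^d)| \leq r^d$ in the second paragraph; everything else is bookkeeping. This is precisely the trade-off ``replace $f(K)$ by $\exp(f(K) + O(d(K)))$ in exchange for reducing $d(K)$ to $0$'' described in the paragraph preceding the theorem, and it is essentially cost-free in our setting because $d(K)$ is already of the same order as the existing $h(K)$ and $f(K)$ bounds in Theorem \ref{thm.finalfre}, so the exponentiation does not worsen the final exponent beyond $\log^{3+o(1)} K$.
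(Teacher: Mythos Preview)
Your proof is correct and follows exactly the route the paper indicates: the paper does not give a detailed argument for Theorem \ref{thm.frebound} but merely remarks that in bounded exponent one may ``reduce $d(K)$ to $0$'' at the cost of enlarging the size bound, and your proof is precisely the natural implementation of that trade-off---pass to the subgroup $\phi(\Z^d)+H_0$, bound $|\phi(\Z^d)|\leq r^d$ via the exponent, and absorb the factor $r^d=\exp(O(d(K)))$ into the existing $\exp(O(\log^{3+o(1)}K))$ bound.
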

Conjecturally one can do much better, and here the Polynomial Fre{\u\i}man-Ruzsa conjecture becomes the following which was one of its (PFR's) original motivations.
\begin{conjecture}[Marton's conjecture]
Suppose that $G$ is a group of bounded exponent and $A\subset G$ has $|A+A| \leq K|A|$.  Then $A$ is $\exp(O(\log K))$-covered by a subgroup $M$ of size at most $\exp(O(\log K))|A|$.
\end{conjecture}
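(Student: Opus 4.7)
The plan is to abandon the combinatorial framework that drives the paper, since that framework intrinsically loses powers of $\log K$ at each of the Croot-Sisask, Chang, and Bogolyubov steps, and instead to work entirely in the \emph{entropic} category, where polynomial (i.e.\ $K^{O(1)}$) bounds are the natural output. Following the entropy formalism of Ruzsa and Tao, for $G$-valued random variables $X,Y$ define the Ruzsa entropic distance $d[X;Y] := H[X-Y'] - \tfrac{1}{2}(H[X]+H[Y])$, where $X', Y'$ are independent copies. The proposal is to reduce Marton's conjecture to its \textbf{entropic form}: if $X$ is a $G$-valued random variable with $d[X;X] \leq \log K$, then there is a uniform random variable $U_H$ on a finite subgroup $H \leq G$ with $d[X; U_H] = O(\log K)$.

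The reduction to this entropic statement is the comparatively easy part. First, if $|A+A|\leq K|A|$ and $X$ is uniform on $A$, then $H[X+X'] \leq \log|A+A| \leq H[X]+\log K$, and by the entropic Pl\"unnecke-Ruzsa inequalities (the entropic analogue of the material in \S\ref{sec.plun}) this gives $d[X;X] \leq \log K + O(1)$. Conversely, once one has $U_H$ uniform on a subgroup with $d[X;U_H] \leq C\log K$, standard manipulations yield $|H| \leq K^{O(C)}|A|$ and concentration of $X - U_H'$ on a set of size $K^{O(C)}$, which combined with Ruzsa's covering lemma (Lemma \ref{lem.rcl}) produces the covering of $A$ by $K^{O(C)}$ translates of $H$ demanded by the conjecture.

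The heart of the matter is the entropic Marton statement itself. I would attack it by an \textbf{entropy compression} argument: among all pairs $(X,U)$ with $U$ uniform on a subgroup of $G$, select one minimising $d[X;U]$ (with $H[X]$ fixed), and assume for contradiction that $d[X;U] > C\log K$ for a large constant $C$. The goal is to produce a new random variable $U'$, uniform on some subgroup, with $d[X;U'] < d[X;U]$. The construction uses a \textbf{fibring identity} applied to independent copies $X_1,X_2$ of $X$ and $Y_1,Y_2$ of $U$: one decomposes the joint entropy $H[X_1+Y_1, X_2+Y_2]$ in two different ways via conditional entropies and extracts an inequality relating $d[X;U]$ to the distances from $X$ to a family of candidate ``improved'' variables built out of the $X_i$ and $Y_i$. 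By averaging, at least one candidate witnesses a strict improvement, contradicting the choice of $(X,U)$.

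The main obstacle is the transition from characteristic $2$, where the fibring identity is cleanest because $-x = x$ makes expressions like $H[X_1+X_2] + H[X_1-X_2]$ fully symmetric, to a group of general bounded exponent $r$. There one must replace the $2$-fold symmetrisation by an $r$-fold one and control the entropy loss arising from the non-invertibility of the multiplication-by-$j$ maps for $2 \leq j \leq r-1$. I expect this to require an asymmetric variant of the entropic distance, and an iteration that progressively quotients out the torsion for which the fibring identity degenerates, in the spirit of an entropic version of Ruzsa's embedding from \S\ref{sec.lcp}; the implicit constant will necessarily depend on $r$, as it must since the conclusion does.
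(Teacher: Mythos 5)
The statement you have been asked to prove is labelled a \emph{conjecture} in the paper: the survey does not prove Marton's conjecture, it only records it (as the bounded-exponent specialisation of the Polynomial Fre{\u\i}man--Ruzsa conjecture) and proves the much weaker Theorem \ref{thm.frebound}, with $\exp(O(\log^{3+o(1)}K))$ in place of $\exp(O(\log K))$. So there is no proof in the paper against which your proposal can be checked, and a complete proof here would be a major result far beyond the scope of this survey.

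That said, your sketch is pointed in a genuinely promising direction: recasting the problem in the entropy category using the Ruzsa entropic distance, reducing to an ``entropic Marton'' statement, and then running an entropy-compression argument around a minimising pair $(X,U)$ is a coherent strategy, and it avoids precisely the logarithmic losses that the Croot--Sisask/Chang/Bogolyubov route incurs. But as written it is not a proof: the entire difficulty is concentrated in the two sentences where you invoke ``a fibring identity'' and assert ``by averaging, at least one candidate witnesses a strict improvement.'' You have not specified the fibring identity, the family of candidate variables $U'$, nor verified that the resulting inequality is strong enough to force a strict decrease in $d[X;U]$ when $d[X;U] > C\log K$; without a quantitative version of that step there is no contradiction and no proof. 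Moreover your own final paragraph flags that the argument degenerates already in passing from exponent $2$ to general bounded exponent $r$, and you offer only the expectation that an asymmetric distance and an iterative quotienting procedure ``should'' handle it. That passage is doing an enormous amount of unjustified work. In short: the reduction to the entropic formulation and the reduction back via Ruzsa covering are plausible and standard, but the core compression step --- the one thing that would make this a theorem rather than a conjecture --- is left entirely open. You should treat this as a research programme rather than a proof, and you should certainly not present it as filling a gap the paper leaves; the paper deliberately leaves it as a conjecture.
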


\subsection{Reducing $d(K)$ in exchange for $h(K)$}  We just saw how to trade dimension in for size in the case where the group has bounded exponent.  In general one cannot trade all of the dimension in for size but Green and Tao in \cite{gretao::8} show (in torsion-free groups) how to reduce the dimension of the progression to $O(\log K)$ while incurring an exponential cost in the covering number so that $h(K)=\Theta(K)$.  (They get a larger polynomial in $K$ in their work but this can be removed given the recent stronger bounds in Fre{\u\i}man's theorem.)

The paper \cite{gretao::8} is, in general, rather useful as a source of tools for giving the lower bounds on the order of relative polynomial growth of sets and we direct the reader interested in the more precise relationships between $h(K), d(K)$ and $f(K)$ there.

As a final remark it is worth saying that convex progressions may not be quite the right notion to deal with and one might like to ask for a convex progression of a particular type.  There is some discussion of this in \cite{gretao::8} but we shall not pursue this here, except to remark that Fre{\u\i}man's theorem is usually stated using generalised arithmetic progressions which are a special type of (translate of a centred) convex progression defined by a cube.  Specifically a set $M$ is a \textbf{generalised arithmetic progression} if
\begin{equation*}
M=\{x_0+z_1x_1+\dots+z_dx_d: |l_i| \leq L_i\}
\end{equation*}
for some natural numbers $L_1,\dots,L_d$ and elements $x_0,\dots,x_d \in G$.  If we define a homomorphism
\begin{equation*}
\phi:\Z^d \rightarrow G; (z_1,\dots,z_d) \mapsto z_1x_1+\dots+z_dx_d,
\end{equation*}
and a convex set $Q:=\prod_{i=1}^d{[-L_i,L_i]}$ then $M=x_0+\phi(\Z^d\cap Q)$.  A \textbf{coset progression} (as defined by Green and Ruzsa in \cite{greruz::0}) is then a set of the form $M+H$ where $H \leq G$ and $M$ is a generalised arithmetic progression in $G$.  Proving the results of this paper for coset progressions instead of convex coset progressions is not conceptually harder, but does seem to involve some additional technical difficulties.  

Generalised arithmetic progressions have been studied in there own right and there are various questions concerning whether they are proper or not, meaning whether $\phi$ is injective on $Q \cap \Z^d$.  Bilu in \cite{bil::} has a nice discussion of this (see also \cite[\S3.1]{taovu::}).

\section{Applications}\label{sec.apps}

As indicated in the introduction there are numerous applications of Fre{\u\i}man's theorem, and for completeness we shall discuss a few of these here.  These are mainly chosen because they do not require too much additional material to develop rather than because they are necessarily the most exciting.  This section is of a much more sketchy nature than the rest of the paper: it is intended to indicate directions one can take the results discussed in this paper; it is not intended to cover them in detail and the interested reader is referred to the papers indicated in each subsection below for more comprehensive discussions.

One thing it is worth remembering is that while Fre{\u\i}man's theorem is very attractive at a qualitative level, in applications one can often squeeze a little more juice out of the situation by using the methods of this paper rather than the results.  In particular the combinatorial arguments on their own are often enough for what one hopes to do.  In this regard it should be mentioned that there are many direct applications of the techniques of Croot and Sisask in \cite{crosis::} and \cite{croabasis::}, which can also be proved using Fre{\u\i}man's theorem but which only really require the Croot-Sisask lemma.

A second remark is due with regard to Roth's theorem.  The reader may be hoping for a discussion of bounds in Roth's theorem in this survey, but this is not really the place for that.  In particular, while the results of Proposition \ref{prop.grow1} are relevant to that work, nothing else from the paper is, and a discussion of the combinatorial techniques of Katz and Koester \cite{katkoe::} and the regular Bohr set technology of Bourgain \cite{bou::5} would be required.

\subsection*{The $U^3$-inverse theorem} Gowers' work \cite{gow::4} marks the start of an explosion of applications of Fre{\u\i}man's theorem after he made the crucial observation that it can be combined with the Balog-Szemer{\'e}di lemma \cite{balsze::}.  Gowers used Fre{\u\i}man's theorem to improve the bounds in Szemer{\'e}di's theorem for arithmetic progressions of length four and a little after that Green and Tao expressed Gowers' ideas in a framework often described as `quadratic Fourier analysis'.  Indeed, Gowers' original aim seems to have included finding a proof of Szemer{\'e}di's theorem which was closer to Roth's proof of Roth's theorem for arithmetic progressions of length three and Green and Tao's framework helps highlight these parallels.  This subsection is more thoroughly explained in the paper \cite{gretao::1}.

Roth's proof of Roth's theorem has, at its core, something now called a $U^2$-inverse theorem.  The $U^2$-norm of a function $f$ on a finite (compact) Abelian group $G$ is defined by
\begin{equation*}
\|f\|_{U^2(G)}^4=\E_{x,y,z \in G}{f(x)\overline{f(x+y)f(x+z)}f(x+y+z)}.
\end{equation*}
It turns out that this is a norm and if $A$ and $B$ are two sets in $G$ with $\|1_A - 1_B\|_{U^2(G)}$ small then the number of three-term arithmetic progressions in $A$ is close to that in $B$.  This is why the $U^2$-norm is useful for understanding problems about three-term arithmetic progressions.  It turns out that if a function does not have small $U^2$-norm then it has a linear bias in the following sense.
\begin{theorem}[$U^2(\F_2^n)$-inverse theorem]
Suppose that $f \in L^\infty(\F_2^n)$ has $\|f\|_{U^2(\F_2^n)} \geq \delta \|f\|_{L^\infty(\F_2^n)}$.  Then there is a linear polynomial $l:\F_2^n \rightarrow \F_2$, meaning a map $x \mapsto r \cdot x$ for some $r \in \F_2^n$, such that
\begin{equation*}
|\langle f,(-1)^l\rangle_{L^2(\F_2^n)}| \geq\delta^{O(1)} \|f\|_{L^\infty(\F_2^n)}.
\end{equation*}
\end{theorem}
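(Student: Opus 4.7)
The plan is to prove the $U^2$-inverse theorem by a direct Fourier computation: the $U^2$-norm to the fourth power is exactly the $\ell^4$-norm of the Fourier transform, and then H\"older (or more precisely Parseval combined with a crude maximum bound) gives the conclusion.

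First I would expand the definition. Writing $a=x$, $b=x+y$, $c=x+z$, $d=x+y+z$, the constraint becomes $a+b+c+d=0$ in $\F_2^n$, so
\begin{equation*}
\|f\|_{U^2(\F_2^n)}^4 = \E_{a+b+c+d=0}f(a)\overline{f(b)}\,\overline{f(c)}f(d).
\end{equation*}
Using the orthogonality relation $\E_r (-1)^{r \cdot (a+b+c+d)} = 1_{a+b+c+d=0}$ (where $r$ ranges over $\F_2^n$) and interchanging the order of summation, this rewrites as
\begin{equation*}
\|f\|_{U^2(\F_2^n)}^4 = \sum_{r \in \F_2^n} |\wh{f}(r)|^4,
\end{equation*}
where $\wh{f}(r) := \E_{x}f(x)(-1)^{r\cdot x} = \langle f,(-1)^{r\cdot \,\cdot\,}\rangle_{L^2(\F_2^n)}$.

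Second, I would bound this sum by pulling out the supremum:
\begin{equation*}
\sum_{r}|\wh{f}(r)|^4 \leq \left(\max_r|\wh{f}(r)|^2\right)\sum_r|\wh{f}(r)|^2 = \left(\max_r|\wh{f}(r)|^2\right)\|f\|_{L^2(\F_2^n)}^2,
\end{equation*}
by Parseval's formula. Since trivially $\|f\|_{L^2(\F_2^n)}^2 \leq \|f\|_{L^\infty(\F_2^n)}^2$, combining with the hypothesis $\|f\|_{U^2(\F_2^n)}^4 \geq \delta^4 \|f\|_{L^\infty(\F_2^n)}^4$ yields
\begin{equation*}
\max_r |\wh{f}(r)|^2 \geq \delta^4 \|f\|_{L^\infty(\F_2^n)}^2,
\end{equation*}
so there exists some $r\in\F_2^n$ with $|\langle f,(-1)^{r\cdot \,\cdot\,}\rangle_{L^2(\F_2^n)}| \geq \delta^2 \|f\|_{L^\infty(\F_2^n)}$, which is exactly the desired conclusion (with explicit exponent $2$ hiding in the $\delta^{O(1)}$).

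There is no real obstacle here: the only thing that needs attention is keeping the normalisations consistent (the $\E$-convention for $L^2$ on $G$ versus the counting convention for $\ell^p(\wh{G})$, which is dual under Parseval), so that the identity $\|f\|_{U^2}^4 = \sum_r|\wh{f}(r)|^4$ holds without stray factors. The reason the proof is so short is that $\F_2^n$ is its own dual and every character is already of the form $(-1)^{r\cdot x}$, so ``linear bias'' is literally the same thing as ``large Fourier coefficient''; the analogous $U^3$-inverse theorem is genuinely deep precisely because one has to replace the characters by quadratic phases, and no such direct Fourier identity is available.
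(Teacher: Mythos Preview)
Your argument is correct: the identity $\|f\|_{U^2}^4=\sum_r|\wh{f}(r)|^4$ together with Parseval and the trivial bound $\|f\|_{L^2}\leq\|f\|_{L^\infty}$ gives the claim with the explicit exponent $2$. The paper does not actually write out a proof, only remarking that the result is ``essentially trivial to prove'', and the Fourier computation you give is precisely the standard one behind that remark.
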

This is essentially trivial to prove and, a version for the group $G=\Z/N\Z$ rather than $\F_2^n$, can be used as the basis for an iteration to prove Roth's theorem on three-term arithmetic progressions.

Now suppose that one is interested in four-term arithmetic progressions.  In this case if we have two sets $A$ and $B$ with $\|1_A - 1_B\|_{U^2(G)}$ small it is \emph{not} necessarily the case that $A$ and $B$ have similar numbers of four-term arithmetic progressions.  There is, however, a stronger norm called the $U^3$-norm for which this is true.  The $U^3$-norm of a function $f$ on a finite (compact) Abelian group $G$ is defined by
\begin{eqnarray*}
\|f\|_{U^3(G)}^8&=&\E_{x,y,z,w \in G}{\left(f(x)\overline{f(x+y)f(x+z)f(x+w)}\cdot\right.}\\ & & \left. \times {f(x+y+z)f(x+y+w)f(x+z+w)\overline{f(x+y+z+w)}}\right).
\end{eqnarray*}
It turns out that this is also a norm and there is a $U^3$-inverse theorem.  This is where Theorem \ref{thm.finalfre} can be inserted into the various proofs of the inverse theorem.  For $\F_2^n$ this is due to Samorodnitsky \cite{sam::} (see also \cite{wol::}) for $\F_2^n$, and one gets the following.
\begin{theorem}[$U^3(\F_2^n)$-inverse theorem]
Suppose that $f \in L^\infty(\F_2^n)$ has $\|f\|_{U^3(\F_2^n)} \geq \delta \|f\|_{L^\infty(\F_2^n)}$.  Then there is a quadratic polynomial $q:\F_2^n \rightarrow \F_2$, meaning a map $x \mapsto x \cdot Ax$ where $A$ is an upper triangular matrix $\F_2^n \rightarrow \F_2^n$, such that
\begin{equation*}
|\langle f,(-1)^q\rangle_{L^2(\F_2^n)}| \geq \exp(-O(\log^{3+o(1)}\delta^{-1}))\|f\|_{L^\infty(\F_2^n)}.
\end{equation*}
\end{theorem}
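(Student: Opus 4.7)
The plan is to follow Samorodnitsky's approach from \cite{sam::}, feeding the graph of a certain map into Theorem~\ref{thm.frebound} (the bounded-exponent refinement of Theorem~\ref{thm.finalfre} recorded in \S\ref{sec.con}). The starting point is the derivative identity
\begin{equation*}
\|f\|_{U^3(\F_2^n)}^8 = \E_{y \in \F_2^n} \|\Delta_y f\|_{U^2(\F_2^n)}^4,
\end{equation*}
where $\Delta_y f(x) := f(x)\overline{f(x+y)}$. By pigeonhole the hypothesis produces a set $Y \subset \F_2^n$ of density at least $\delta^{O(1)}$ such that $\|\Delta_y f\|_{U^2} \geq \delta^{O(1)} \|f\|_\infty^2$ for every $y \in Y$. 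Applying the $U^2$-inverse theorem to each derivative then yields a map $\phi: Y \rightarrow \wh{\F_2^n}$ with $|\widehat{\Delta_y f}(\phi(y))| \geq \delta^{O(1)} \|f\|_\infty^2$ for all $y \in Y$.

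Next I would show that $\phi$ carries considerable additive structure. Expanding $\|f\|_{U^3}^8$ as an eightfold correlation and applying Cauchy-Schwarz twice in the manner of Gowers \cite{gow::4}, one obtains at least $\delta^{O(1)}|\F_2^n|^3$ additive quadruples $(y_1,y_2,y_3,y_4) \in Y^4$ satisfying both $y_1+y_2+y_3+y_4 = 0$ and $\phi(y_1)+\phi(y_2)+\phi(y_3)+\phi(y_4) = 0$. Balog-Szemer\'edi-Gowers then refines $Y$ to $Y' \subset Y$ of comparable density on which the graph $\Gamma := \{(y,\phi(y)) : y \in Y'\} \subset \F_2^n \times \wh{\F_2^n}$ has doubling at most $\delta^{-O(1)}$. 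Since $\F_2^n \times \wh{\F_2^n}$ has exponent $2$, Theorem~\ref{thm.frebound} gives a subgroup $H$ with $|H| \leq \exp(O(\log^{3+o(1)}\delta^{-1}))|\Gamma|$ that $\exp(O(\log^{3+o(1)}\delta^{-1}))$-covers $\Gamma$.

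After a final pigeonhole pushing an $\exp(-O(\log^{3+o(1)}\delta^{-1}))$-dense chunk of $\Gamma$ into a single coset of $H$, the subgroup $H$ itself may be taken to be the graph of a linear map $L: V \rightarrow \wh{\F_2^n}$ on a subspace $V \leq \F_2^n$ of codimension $k = O(\log^{3+o(1)}\delta^{-1})$. Thus $\phi$ agrees with an affine-linear function $y \mapsto L(y) + c$ on a positive fraction of $Y'$. Representing $L$ by a matrix $A$ in coordinates yields a candidate quadratic $q(x) := x^T A x$, and a standard Plancherel and Cauchy-Schwarz manipulation converts the lower bound $|\widehat{\Delta_y f}(\phi(y))| \geq \delta^{O(1)}\|f\|_\infty^2$ on a dense set of $y$ into a correlation $|\langle f, (-1)^q \rangle_{L^2(\F_2^n)}| \geq \exp(-O(k))\|f\|_\infty$, matching the bound claimed in the theorem.

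The principal obstacle is symmetrisation. The linear map $L$ produced by Theorem~\ref{thm.frebound} has no a priori reason to correspond to a genuine quadratic form over $\F_2$; equivalently, the matrix $A$ need not be symmetric. The resolution is the classical argument of Gowers (as reformulated by Green and Tao in \cite{gretao::1}), in which one evaluates $\phi(y_1+y_2)$ in two distinct ways by re-applying the $U^2$-inverse theorem and then equates the two expressions to force a cocycle-type relation on $L$. The delicate point is to carry this through while keeping all incurred losses polynomial in $\delta^{-1}$ at the \emph{exponent} level, so that the final bound remains the $\exp(-O(\log^{3+o(1)}\delta^{-1}))$ inherited directly from Freiman's theorem rather than degrading to a polynomially worse quantity.
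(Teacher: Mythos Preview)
The paper does not give a detailed proof of this statement; it merely records that the result follows by inserting the improved Fre{\u\i}man bounds (Theorem~\ref{thm.finalfre}, or more directly its bounded-exponent consequence Theorem~\ref{thm.frebound}) into Samorodnitsky's argument \cite{sam::}. Your proposal is precisely a sketch of that argument---derivative identity, $U^2$-inversion on derivatives, Balog--Szemer{\'e}di--Gowers on the graph, Fre{\u\i}man in $\F_2^n\times\wh{\F_2^n}$, linearisation, symmetrisation, and integration---with the new Fre{\u\i}man bound substituted at the appropriate step, so it matches the paper's indicated approach.
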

This is much harder to prove than the $U^2(\F_2^n)$-inverse theorem and there is actually a close relationship between this and Marton's conjecture.  Indeed, Green and Tao in \cite{gretao::6} and Lovett in \cite{lov::} showed that Marton's conjecture for $\F_2^n$ is equivalent to the following.
\begin{conjecture}[Polynomial $U^3(\F_2^n)$-inverse conjecture]
Suppose that $f \in L^\infty(\F_2^n)$ has $\|f\|_{U^3(\F_2^n)} \geq \delta \|f\|_{L^\infty(\F_2^n)}$.  Then there is a quadratic polynomial $q:\F_2^n \rightarrow \F_2$ such that
\begin{equation*}
|\langle f,(-1)^q\rangle_{L^2(\F_2^n)}| \geq \exp(-O(\log \delta^{-1}))\|f\|_{L^\infty(\F_2^n)}.
\end{equation*}
\end{conjecture}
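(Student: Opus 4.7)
The plan is to use the equivalence established by Green-Tao \cite{gretao::6} and Lovett \cite{lov::} to reduce the Polynomial $U^3(\F_2^n)$-inverse conjecture to Marton's conjecture for $\F_2^n$, and then to try to prove the latter by sharpening the chain of implications developed in this survey. Concretely, the target is to replace the $\log^{3+o(1)}K$ in Theorem \ref{thm.frebound} by $\log K$.

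First I would audit the $\log K$ losses in the path to Theorem \ref{thm.frebound} and attempt to replace each with an absolute constant. The cube exponent $3$ in Proposition \ref{prop.grow2} arises from three coupled optimisations: the Croot-Sisask exponent $p = O(\log K)$, the iteration parameter $m = \log^{1+o(1)}K$, and Konyagin's bootstrap in Proposition \ref{prop.kony}. Simply retuning the free parameters cannot reduce the exponent to $1$, because each of these choices is tight for the method used; in particular the Croot-Sisask lemma gives $\exp(-O(\eta^{-2}p))$ in density, and in our context we are forced to take $p \gtrsim \log K$ to absorb the $K^{1/p}$ that appears in the proof of Proposition \ref{prop.key}.

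The natural route is therefore to try to prove a polynomial strength Bogolyubov-Ruzsa lemma directly in the $\F_2^n$-setting: given $|A+A| \leq K|A|$, produce a subspace $V$ with $|V| \geq K^{-O(1)}|A|$ and $V \subset 2A - 2A$. I would attempt this via entropy methods. The heuristic is that the doubling $K$ controls the entropy deficit $H(X+X') - H(X) \leq \log K$, where $X,X'$ are independent uniform on $A$, and that this control should propagate through a P{\"l}{\"u}nnecke-Ruzsa-type inequality for the Ruzsa entropy distance $d(X,Y) := H(X-Y) - \tfrac{1}{2}(H(X) + H(Y))$. One expects this entropic P{\"l}{\"u}nnecke inequality to admit a Petridis-style proof based on the monotonicity principle in Lemma \ref{lem.petridis}, and once it is in place the goal becomes showing that a random variable of small Ruzsa entropy distance from itself lies close (in entropy) to the uniform distribution on a subspace of entropy not much larger than $H(X)$.

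The main obstacle, and the reason this route is not already carried out in the paper, is precisely this last step: passing from \emph{entropic} proximity to a subspace to a genuine covering of $A$ by a subspace of comparable size with only polynomial losses. Every existing argument of this type routes through the covering machinery of \S\ref{sec.covering}--\S\ref{sec.kony}, and for that machinery to avoid its polylogarithmic losses one needs a structural argument for entropic proximity that does not reduce to Croot-Sisask sampling --- in effect, a discrete analogue of a projection theorem in the category of finite random variables. Finding such an argument is the hard part; once it is in hand the rest of the chain, including the passage back to the $U^3$-inverse statement via \cite{gretao::6,lov::}, requires no new ideas.
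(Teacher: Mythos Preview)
The statement you are asked to prove is a \emph{conjecture}: the paper does not prove it, and indeed presents it as an open problem equivalent (via \cite{gretao::6,lov::}) to Marton's conjecture for $\F_2^n$. There is therefore no proof in the paper to compare your proposal against.

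Your proposal is not a proof either, and you are candid about this. You correctly identify the reduction to Marton's conjecture, correctly diagnose why the $\log^{3+o(1)}K$ exponent in Theorem~\ref{thm.frebound} cannot be pushed to $\log K$ by re-tuning the existing parameters (the Croot--Sisask density loss $\exp(-O(\eta^{-2}p))$ with $p\gtrsim\log K$ is a genuine bottleneck), and you sketch an entropy-based programme in the spirit of the Ruzsa entropy distance. But you then explicitly name the missing ingredient --- converting entropic proximity to a subspace into an honest covering with only polynomial losses --- and concede that ``finding such an argument is the hard part''. That is exactly the content of the conjecture: the step you leave open is not a technicality but the whole difficulty. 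So what you have written is a reasonable research outline, not a proof, and the gap you identify is real and (at the time of the paper) unresolved.
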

If true this would bring the $U^3(\F_2^n)$-inverse state of affairs in line with the $U^2$ situation.  

Again, the analogue of the $U^3(\F_2^n)$-inverse theorem for the group $G=\Z/N\Z$ can be used to give a proof of Szemer{\'e}di's theorem for progressions of length four, and, of course, there are higher analogues called $U^k$-norms for longer progressions but again we do not discuss this here.

\subsection*{Long arithmetic progressions in sumsets} The question of finding long arithmetic progressions in sets of integers is one of central interest in additive combinatorics.  The basic question has the following form: suppose that $A_1,\dots,A_k \subset \{1,\dots,N\}$ all have density at least $\alpha$.  How long an arithmetic progression can we guarantee that $A_1+\dots+A_k$ contains?

For one set this is addressed by the notoriously difficult Szemer{\'e}di's theorem \cite{sze::,sze::0} where the best quantitative work is that of Gowers \cite{gow::4,gow::0} (as mentioned in the previous subsection); for two sets the longest progression is much longer with the state of the art due to Green \cite{gre::0} (see also Croot and Sisask \cite{crosis::}); for three sets or more the results get even stronger with the work of Fre{\u\i}man, Halberstam and Ruzsa \cite{frehalruz::}; and finally for eight sets or more, longer again by the recent work of Schoen \cite{sch::1}.

The ideas around theorem \ref{thm.finalfre} (see \cite{san::00}) can be used to give an improvement for four sets or more, and in particular we have the following theorem.
\begin{theorem}
Suppose that $A_1,\dots,A_4 \subset \{1,\dots,N\}$ all have density at least $\alpha$.  Then $A_1+\dots+A_4$ contains an arithmetic progression of length $N^{O(\log^{-O(1)}2\alpha^{-1})}$.
\end{theorem}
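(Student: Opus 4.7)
The strategy is to establish a Bogolyubov-Ruzsa-type lemma with polylogarithmic-in-$\alpha^{-1}$ dimension bounds for the four-fold sumset, and then extract an arithmetic progression from the resulting low-dimensional Bohr set using a Dirichlet pigeonhole argument. The plan proceeds in three steps.

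First I would pass to a convenient cyclic setting. Embed $\{1,\dots,N\}$ into $\Z/N'\Z$ for some prime $N'=\Theta(N)$; the density becomes $\Omega(\alpha)$. Reflecting two of the $A_i$ (which only translates the sumset and does not change density) shows that it suffices to find a long AP in $A_1+A_2-A_3-A_4$, and a short averaging argument further reduces to the case $A_1=A_2=A_3=A_4=A$ with $A$ of density $\Omega(\alpha)$. The task is thus to show that $2A-2A$ contains an AP of length $N^{\Omega(\log^{-O(1)}\alpha^{-1})}$, and the key starting observation is that $A$ has doubling $K=O(\alpha^{-1})$ trivially, so all the bounds of the paper apply with $\log K = O(\log \alpha^{-1})$.

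Second, I would combine the Bogolyubov-type Fourier identity $1_A \ast 1_A \ast 1_{-A} \ast 1_{-A}(x) = \sum_\gamma |\wh{1_A}(\gamma)|^4 \gamma(x)$ with the paper's improved dimension control on the large spectrum. Specifically, using Proposition \ref{prop.grow2} together with Proposition \ref{prop.polytobohr} (and the machinery of \S\ref{sec.ccprpg}), one shows, essentially as in \cite{san::00}, that $2A-2A$ contains a translate of a Bohr set $\Bohr(\Gamma,\delta)$ with $|\Gamma| = d = O(\log^{O(1)} \alpha^{-1})$ and $\delta = \Omega(1)$. The required ingredient is that the effective dimension of the large spectrum $\LSpec(A,\epsilon)$, as captured by Proposition \ref{prop.upper}, depends only polylogarithmically on the doubling constant.

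Third, the extraction of a long AP is classical: an application of Dirichlet's simultaneous approximation theorem to the $d$ characters in $\Gamma$ shows that $\Bohr(\Gamma,\delta)$ in $\Z/N'\Z$ contains an AP of length at least $\lfloor \delta(N')^{1/d}\rfloor$; see for instance \cite[Lemma 4.20]{taovu::}. Substituting the bounds from the previous step yields an AP of length at least $N^{\Omega(\log^{-O(1)}\alpha^{-1})}$ as required.

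The main obstacle is clearly the second step. A naive Bogolyubov argument produces a Bohr set of dimension $O(\alpha^{-2})$, which translates only to an AP of length $N^{\Omega(\alpha^{2})}$ — far weaker than the stated bound. To reduce the dimension to $\log^{O(1)}\alpha^{-1}$ one must use the full force of the paper's improved Bogolyubov-Ruzsa lemma, in particular the Croot-Sisask-based Proposition \ref{prop.key} and the Konyagin iteration leading to Proposition \ref{prop.grow2}. Packaging this so that the Bohr set genuinely sits inside $2A-2A$ (rather than merely containing some iterate of an auxiliary set $X$) requires a careful Fourier-analytic comparison between Propositions \ref{prop.upper} and \ref{prop.lowerbound}, matching frequency sets and width parameters in a way that controls both $|\Gamma|$ and $\delta$ simultaneously. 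Once this is done the AP extraction is routine.
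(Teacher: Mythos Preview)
The paper does not actually prove this theorem: it merely states it in the applications section and points the reader to \cite{san::00}. So there is no proof in the paper to compare against, and your sketch is being judged on its own merits against what \cite{san::00} does.

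Your overall three-step plan (reduce to a cyclic group and to a single set $A$; find a low-dimensional Bohr set inside $2A-2A$; extract an AP by Dirichlet) is exactly the right architecture, and steps one and three are fine. The reduction to a single set works via the standard intersection-of-translates trick: averaging over $x_2,x_3,x_4$ shows some $A:=A_1\cap(x_2+A_2)\cap(x_3+A_3)\cap(x_4+A_4)$ has density $\Omega(\alpha^4)$, and then $2A-2A$ sits in a translate of $A_1+A_2-A_3-A_4$. Step three is indeed routine.

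The gap is in step two, and specifically in which results you invoke. Propositions \ref{prop.grow2} and \ref{prop.polytobohr} both go the \emph{wrong way} for your purpose: the former yields a set $X$ of low-order polynomial growth that \emph{covers} $A$, and the latter produces a Bohr set that \emph{contains} $X-X$. Neither gives a Bohr set \emph{inside} $2A-2A$; the comparison you allude to between Propositions \ref{prop.upper} and \ref{prop.lowerbound} does not reverse this containment. What you actually need is the strong form of the Bogolyubov--Ruzsa lemma from \cite{san::00}: one applies the Croot--Sisask lemma (in the spirit of Proposition \ref{prop.key}, but with $f=1_A$, $S=-A$, and the inner product taken against $1_A\ast\mu_{-A}$) to obtain a large set $X$ with $mX\subset 2A-2A$, and then---crucially---uses Chang's lemma on the spectrum of $X$, exploiting the fact that $X$ has positive density in the ambient cyclic group, to show that $mX$ contains a Bohr set with $O(\log^{O(1)}\alpha^{-1})$ frequencies. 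This Chang-style step is not among the numbered results of the survey and cannot be synthesised from the propositions you cite; it is the genuine additional input from \cite{san::00}. Once you replace your second paragraph with that argument the sketch goes through.
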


\subsection*{$\Lambda(4)$-estimate for the squares} A wonderful conjecture of Rudin \cite{rud::0} asserts that the squares are a $\Lambda(4)$-set.  In symbols this is the following conjecture.
\begin{conjecture}
Suppose that $n_1,\dots,n_k$ are natural numbers.  Then
\begin{equation*}
\int{\left|\sum_{i=1}^k{\exp(2\pi i n_i^2\theta)}\right|^4d\theta} =O(k^{2+o(1)}).
\end{equation*}
\end{conjecture}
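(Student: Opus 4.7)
The plan is to reformulate the integral as an additive energy, then use the tools developed in the paper to reduce the conjecture to a question about the Diophantine geometry of the squares. Expanding the fourth power and integrating gives
\begin{equation*}
\int{\left|\sum_{i=1}^k{\exp(2\pi i n_i^2\theta)}\right|^4d\theta} = \#\{(i,j,l,m) \in [k]^4 : n_i^2 + n_j^2 = n_l^2 + n_m^2\},
\end{equation*}
so the conjecture is exactly that the set $S:=\{n_1^2,\dots,n_k^2\}$ has additive energy $E(S)=O(k^{2+o(1)})$.

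Suppose for contradiction that $E(S) \geq k^{2+c}$ for some fixed $c>0$. The Cauchy-Schwarz inequality forces $|S+S| \leq k^{2-c}$, so that the doubling $K$ of $S$ is at most $k^{1-c}$. This is polynomial in $k$ rather than bounded, so Freiman's theorem does not apply to $S$ directly, but applying the Balog-Szemer\'edi-Gowers lemma would extract a subset $S' \subset S$ with $|S'| \geq k^{\Omega_c(1)}$ and doubling at most a fixed power of $K$. Theorem \ref{thm.finalfre} then produces a centred convex coset progression $M$ containing $S'$ (up to an $\exp(O(\log^{3+o(1)}k))$-covering set of translates which can be absorbed into one translate using $|S'|$) with $\dim M = O(\log^{3+o(1)} k)$ and $|M| \leq \exp(O(\log^{3+o(1)}k))|S'|$. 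Since $S' \subset \Z$ the coset part of $M$ is trivial and $M$ is a genuine multi-dimensional arithmetic progression in the integers of controlled dimension and size.

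The heart of the argument is then to show that such a progression cannot contain too many squares. For a $1$-dimensional progression this is the Bombieri-Granville-Pintz bound of $O(V^{2/3+o(1)})$ squares in an arithmetic progression of length $V$. The main obstacle, and what makes the full conjecture famously open, is obtaining an analogous bound for $d$-dimensional progressions when $d$ is allowed to grow like $\log^{3+o(1)} k$: one would want an estimate of the shape ``a $d$-dimensional progression of volume $V$ contains at most $V^{1-\Omega(1)}$ squares'', with the saving essentially independent of $d$. The naive approach of slicing into $1$-dimensional fibres and applying Bombieri-Granville-Pintz loses a factor of $V^{1-1/d}$ which destroys the saving once $d$ is large; more refined tools from Diophantine geometry, perhaps along the lines of the Bombieri-Pila determinant method applied to the variety $\{y=x^2\}$ cut out within the progression, would be needed.

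Granted such a uniform Diophantine estimate, the argument closes: the $|S'| \geq k^{\Omega_c(1)}$ squares produced above would sit inside a progression of size $\exp(O(\log^{3+o(1)} k))|S'|$, forcing $|S'| \leq (\exp(O(\log^{3+o(1)} k))|S'|)^{1-\Omega(1)}$, which is a contradiction for $k$ large once $c$ is fixed. It is worth noting that any progress on the Polynomial Fre{\u\i}man-Ruzsa conjecture would tighten the dimension to $O(\log k)$ and reduce the required Diophantine input accordingly, but even then the uniformity-in-dimension issue remains the essential difficulty.
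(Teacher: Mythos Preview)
The statement you are asked to prove is explicitly labelled a \emph{conjecture} in the paper (Rudin's $\Lambda(4)$ conjecture for the squares), and the paper does not prove it; indeed, immediately afterwards the paper records only the partial bound $O(k^3\exp(-\Omega(\log^{\Omega(1)}2k)))$ and remarks that ``this is far from Rudin's conjecture but it is still the best known result at this time''. So there is no proof in the paper to compare against.

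Your proposal is not a proof either, and to your credit you say so: the ``uniform Diophantine estimate'' you appeal to---that a $d$-dimensional progression of volume $V$ contains at most $V^{1-\Omega(1)}$ squares with the exponent saving essentially independent of $d$---is precisely the open ingredient, and you identify it as such. The surrounding scaffolding (rewrite the integral as additive energy, apply Balog--Szemer\'edi--Gowers to pass to a subset with small doubling, apply Fre{\u\i}man to place that subset in a low-dimensional progression, then invoke a squares-in-progressions bound) is the standard and correct outline, and is in fact the route by which the partial $k^{3-o(1)}$ result quoted in the paper is obtained.

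One genuine error worth flagging: the sentence ``The Cauchy--Schwarz inequality forces $|S+S|\le k^{2-c}$'' has the inequality the wrong way round. Cauchy--Schwarz gives $E(S)\ge |S|^4/|S+S|$, so from $E(S)\ge k^{2+c}$ you obtain only the \emph{lower} bound $|S+S|\ge k^{2-c}$; large additive energy does not by itself force small doubling of the whole set. This is exactly why Balog--Szemer\'edi--Gowers is required, and you do invoke it in the very next sentence, so the slip is local and does not damage the overall outline---but the intermediate claim about $|S+S|$ (and hence about the doubling $K$ of $S$) is false as written.
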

Inserting ideas around Theorem \ref{thm.finalfre} (see \cite{san::00}) into the work of \cite{cha::1} (itself developed from an argument of Bourgain in \cite{johlin::}) yield the following result
\begin{theorem}
Suppose that $n_1,\dots,n_k$ are natural numbers.  Then
\begin{equation*}
\int{\left|\sum_{i=1}^k{\exp(2\pi i n_i^2\theta)}\right|^4d\theta} =O(k^3\exp(-\Omega(\log^{\Omega(1)} 2k))).
\end{equation*}
\end{theorem}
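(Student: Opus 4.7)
The plan is to follow the Bourgain-Chang template of \cite{johlin::,cha::1}, inserting Theorem \ref{thm.finalfre} wherever Fre{\u\i}man's theorem is invoked. Write $S:=\{n_1^2,\dots,n_k^2\}$ and note that, expanding the integrand, the quantity in question is the additive energy $E(S)$, i.e.\ the number of quadruples $(a,b,c,d)\in S^4$ with $a+b=c+d$. It will therefore suffice to prove that if $E(S)\geq k^3/L$ then $L\geq \exp(\log^{\Omega(1)} 2k)$, since rearranging then delivers the claimed bound.

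First I would apply a polylogarithmic-loss Balog-Szemer{\'e}di-Gowers theorem to produce a subset $S'\subset S$ with $|S'|\gg k/L^{O(1)}$ and $|S'+S'|\leq L^{O(1)}|S'|$. Second, I would apply Theorem \ref{thm.finalfre} to $S'$ to obtain a centred convex coset progression $M$ with $\dim M=O(\log^{3+o(1)} L)$ and $|M|\leq \exp(O(\log^{3+o(1)} L))|S'|$, together with a set $T$ of size at most $\exp(O(\log^{3+o(1)} L))$ such that $S'\subset T+M$. This reduces the problem to counting squares inside a union of $|T|$ translates of the low-dimensional progression $M$.

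Third, I would invoke the Diophantine input used by Chang, namely that a $d$-dimensional centred convex coset progression of size $N$ contains at most $N^{o(1)}(\log N)^{O(d)}$ perfect squares, proved by a Bombieri-Pila-style estimate for the quadratic curve $y=x^2$ against the lattice underlying $M$. Since $|M|\leq \exp(O(d))k$, summing over the $|T|$ translates of $M$ yields
\begin{equation*}
|S'|\leq |T|\cdot k^{o(1)}(\log k)^{O(d)}=k^{o(1)}\exp(O(d\log\log k)).
\end{equation*}
Comparing with $|S'|\gg k/L^{O(1)}$ and substituting $d=O(\log^{3+o(1)} L)$ produces the inequality $\log k\leq O(\log L)+O(\log^{3+o(1)} L)\log\log k$, which forces $\log L\geq \log^{\Omega(1)} 2k$.

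The main obstacle is the Diophantine input bounding the number of squares in a centred convex coset progression of dimension $d$: Chang's original argument in \cite{cha::1} is formulated for generalised arithmetic progressions, and adapting it to the centred convex coset progressions produced by Theorem \ref{thm.finalfre} (handling the coset subgroup $H$, and tracking the dependence on $d$ carefully enough that the saving survives) is where the final exponent $\Omega(1)$ is effectively determined. A secondary but essentially routine point is to use a version of Balog-Szemer{\'e}di-Gowers which loses only a polynomial in $L$, so that the savings from Theorem \ref{thm.finalfre} are not swamped by the energy-to-small-doubling reduction.
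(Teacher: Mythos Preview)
Your proposal is correct and follows exactly the route the paper indicates: the paper does not give a self-contained proof but simply says the result follows by inserting Theorem \ref{thm.finalfre} into the Bourgain--Chang argument (equivalently, into \cite[Theorem 8]{sch::1} together with Gowers' version of Balog--Szemer{\'e}di), which is precisely the three-step scheme you outline. Your identification of the main technical point --- adapting Chang's square-counting estimate from generalised arithmetic progressions to the centred convex coset progressions output by Theorem \ref{thm.finalfre}, and tracking the $d$-dependence --- is also apt; note that one can sidestep this by passing from the convex coset progression to a genuine generalised arithmetic progression via John's theorem at the cost of an $\exp(O(d\log d))$ factor, which is harmless here.
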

This is essentially equivalent to inserting Theorem \ref{thm.finalfre} into the proof of \cite[Theorem 8]{sch::1} and Gowers' \cite{gow::4} version of the Balog-Szemer{\'e}di Lemma \cite{balsze::}.  Of course, this is far form Rudin's conjecture but it is still the best known result at this time.

\subsection*{The Konyagin-{\L}aba theorem} Ideas around Theorem \ref{thm.finalfre} (see \cite{san::00}) inserted into the argument at the end of \cite{sch::1} yield the following quantitative improvement to a result from \cite{konaba::}.
\begin{theorem}[Konyagin-{\L}aba theorem]
Suppose that $A$ is a set of reals and $\alpha \in \R$ is transcendental.  Then 
\begin{equation*}
|A+\alpha.A| = \exp(\Omega(\log^{\Omega(1)}2|A|))|A|.
\end{equation*}
\end{theorem}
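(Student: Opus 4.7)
The plan is to follow the argument at the end of Schoen's paper \cite{sch::1} but with Theorem \ref{thm.finalfre} substituted for the weaker Freiman-type bound he uses. Writing $K := |A+\alpha A|/|A|$, the goal is to force $\log K = \Omega(\log^{\Omega(1)} 2|A|)$.

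First, since $|\alpha A| = |A|$, Petridis' Corollary \ref{cor.petcor} applied to $A$ with $B = \alpha A$ yields $X \subset \alpha A$ with $|2A + X| \leq K^2|X|$, and as $|X| \leq |A|$ this gives $|A + A| \leq K^2|A|$. So $A$ has doubling $K^2$ and Theorem \ref{thm.finalfre} provides $X'$ with $|X'| \leq \exp(O(\log^{3+o(1)}K))$ and a centred convex coset progression $M$ with $d := \dim M = O(\log^{3+o(1)}K)$ and $|M| \leq \exp(O(\log^{3+o(1)}K))|A|$ such that $A \subset X' + M$. Since $G = \R$ is torsion-free the subgroup component of $M$ is trivial and $M$ is a $d$-dimensional convex progression, which at controlled cost we may take to be proper. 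Pigeonholing over the translates $\{x + M : x \in X'\}$ gives $x_0 \in X'$ and $A_0 := A \cap (x_0 + M)$ with $|A_0| \geq |A|/|X'|$; translating we assume $A_0 \subset M$.

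The heart of the argument is a transcendence-driven lower bound on $|A_0 + \alpha A_0|$. Write $M = \phi(Q \cap \Z^d)$ with $\phi$ injective on $Q \cap \Z^d$, and set $V := \mathbb{Q}\text{-span}\{\phi(e_1), \ldots, \phi(e_d)\} \subseteq \R$. The key observation is that $\dim_{\mathbb{Q}}(V + \alpha V) \geq d + 1$: if instead $\alpha V \subseteq V$ then $V$ would be a finite-dimensional $\mathbb{Q}[\alpha]$-module, contradicting the infinite $\mathbb{Q}$-dimension of $\mathbb{Q}[\alpha]$ which follows from the transcendence of $\alpha$. Consequently the $\mathbb{Q}$-linear map $L \colon \Z^{2d} \to \R$, $(n,m) \mapsto \sum_i (n_i + \alpha m_i)\phi(e_i)$, has kernel lattice of rank at most $d-1$. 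A lattice-point count (performed after first rebalancing $M$ to have roughly equal side-lengths, the Plünnecke--Ruzsa losses from which are absorbed into our parameter bounds) then gives
\begin{equation*}
\max_{s \in \R} \bigl|\{(a, a') \in A_0 \times A_0 : a + \alpha a' = s\}\bigr| \leq |M|^{(d-1)/d + o(1)},
\end{equation*}
whence $|A_0 + \alpha A_0| \geq |A_0|^2 / |M|^{(d-1)/d + o(1)}$.

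Combining $|A_0 + \alpha A_0| \leq K|A|$ with the size bounds on $|A_0|$ and $|M|$ gives $K \geq |A|^{1/d}/\exp(O(\log^{3+o(1)}K))$, i.e.\ $\log K \geq (\log|A|)/d - O(\log^{3+o(1)}K)$. Substituting the bound $d = O(\log^{3+o(1)}K)$ and simplifying gives an inequality of the form $\log^{O(1)}K \geq \Omega(\log|A|)$, from which $\log K = \Omega(\log^{\Omega(1)} 2|A|)$ is immediate. The principal obstacle is the lattice-point count bounding the multiplicities $\max_s r_{A_0}(s)$: a naive estimate of $(\max_i N_i)^{d-1}$ is far too weak when $M$ is wildly unbalanced, so one must either preprocess $M$ via a Freiman-isomorphism-style rebalancing step (the associated Plünnecke losses being harmless in our regime) or invoke a suitable discrete Brunn--Minkowski inequality to replace $(\max_i N_i)^{d-1}$ with the geometric mean $|M|^{(d-1)/d}$. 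Once this is in place the rest of the argument is routine Plünnecke--Ruzsa bookkeeping.
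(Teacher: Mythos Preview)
Your overall strategy---Pl\"unnecke to get small doubling of $A$, Theorem \ref{thm.finalfre} to place $A$ in a low-dimensional convex progression, then a rank argument exploiting transcendence of $\alpha$ to bound representation multiplicities---is exactly what the paper means by ``insert Theorem \ref{thm.finalfre} into the argument at the end of \cite{sch::1}'', and the numerical bookkeeping at the end is correct: one does arrive at an inequality of the form $\log^{O(1)}K \gtrsim \log|A|$.

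Two places deserve more care. First, your rank bound $\dim_{\mathbb{Q}}(V+\alpha V)\geq d+1$ presumes $\dim_{\mathbb{Q}}V=d$, i.e.\ that the generators $\phi(e_1),\dots,\phi(e_d)$ are $\mathbb{Q}$-linearly independent; properness of $M$ alone does not guarantee this (take $\phi(e_1)=1$, $\phi(e_2)=100$, $Q=[-10,10]^2$). One can reduce to this case by replacing the generators with a $\mathbb{Z}$-basis for the group they generate, but this needs saying and the side-length bookkeeping should be tracked. Second, and more seriously, the multiplicity bound $\max_s r(s)\leq |M|^{(d-1)/d+o(1)}$ is the heart of the matter and you have essentially deferred it: a rank-$(d-1)$ sublattice of $\Z^{2d}$ intersected with a product box can, in principle, contain far more than $(\text{volume})^{(d-1)/(2d)}$ points if the lattice is badly aligned, so neither ``rebalancing $M$'' (which generically increases the dimension, defeating the purpose) nor a bare invocation of discrete Brunn--Minkowski obviously delivers what you need. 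This is precisely the place where Schoen's argument does real work, and you should either reproduce that argument or give a self-contained substitute rather than gesturing at two possible fixes.
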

What is particularly interesting here is that there is a simple construction which shows that there are arbitrarily large sets $A$ with $|A+\alpha.A| = \exp(O(\sqrt{\log |A|}))|A|$.

\section*{Acknowledgements}

The author should very much like to thank Andrew Granville for a very thorough reading of this paper and supplying a much clearer proof of Proposition \ref{prop.coset}, Sergei Konyagin for a talk on his improvements at the Paul Tur{\'a}n memorial conference 2011, Olof Sisask for directing the author's attention to a better proof of the Marcinkiewicz-Zygmund inequality, and an anonymous referee for a very thorough reading of this paper which has made it immeasurably clearer.

It should also be apparent that the author is heavily influenced by the work of Ben Green, Imre Ruzsa and Terry Tao and this survey would not exist without their numerous insights.  Ben, in particular, has been exceptionally generous with his ideas and conversations.

\bibliographystyle{halpha}

\bibliography{references}

\end{document}